\title{Maximising the Number of Cycles in Graphs with Forbidden Subgraphs}
\author{Natasha Morrison\thanks{Department of Pure Mathematics and Mathematical Statistics, University of
Cambridge, Wilberforce Road, Cambridge, United Kingdom
 and Instituto Nacional de Matem\'{a}tica Pura e Aplicada, Rio de Janeiro, RJ, Brasil. Research partially supported by CNPq and Sidney Sussex College, Cambridge.
\newline E-mail: \texttt{morrison@dpmms.cam.ac.uk}.}  \and Alexander Roberts\thanks{Mathematical Institute, University of Oxford, Woodstock Road, Oxford, United Kingdom. \newline  E-mail: \texttt{\{robertsa, scott\}@maths.ox.ac.uk}.} \and Alex Scott\footnotemark[2]\footnotemark[2] \thanks{Supported by a Leverhulme Trust Research Fellowship.}}
\newtheoremstyle{case}{}{}{\normalfont}{}{\itshape}{:}{ }{}
\newtheorem{thm}{Theorem}[section]
\newtheorem{lem}[thm]{Lemma}
\newtheorem{claim}[thm]{Claim}
\newtheorem{conj}[thm]{Conjecture}
\theoremstyle{definition}
\newtheorem*{ack}{Acknowledgements}
\newtheorem{qu}[thm]{Question}
\numberwithin{equation}{section}
\newtheoremstyle{case}{}{}{\normalfont}{}{\itshape}{\normalfont:}{ }{}
\theoremstyle{case}
\def\comment#1{}
\newcommand{\ex}{{\rm ex}}
\newcommand{\Ex}{{\rm EX}}
\newcommand{\beq}{\begin{eqnarray*}}
\newcommand{\eeq}{\end{eqnarray*}}
\def\build#1_#2^#3{\mathrel{\mathop{\kern 0pt#1}\limits_{#2}^{#3}}}
\newcommand{\beqs}{\begin{eqnarray}}
\newcommand{\eeqs}{\end{eqnarray}}
\newcommand{\bP}{\mathbb{P}}
\newcommand{\bN}{\mathbb{N}}
\newcommand{\bR}{\mathbb{R}}
\newcommand{\bE}{\mathbb{E}}
\newcommand{\cA}{\mathcal{A}}
\numberwithin{equation}{section}
\begin{document}

\maketitle

\begin{abstract}
Fix $k \ge 2$ and let $H$  be a graph with $\chi(H) = k+1$ containing a critical edge. We show that for sufficiently large $n,$ the unique $n$-vertex $H$-free graph containing the maximum number of cycles is $T_k(n)$. This resolves both a question and a conjecture of Arman, Gunderson and Tsaturian \cite{Gund1}.
\end{abstract}

\section{Introduction}

For a graph $G$, let $c(G)$ be the number of cycles in $G$.  The problem of bounding $c(G)$ for various classes of graph has a long history: for example, an upper bound on $c(G)$ in terms of the cyclomatic number of $G$ was given by Ahrens \cite{ahrens} in 1897; while a lower bound is implicit in work of Kirchhoff \cite{kirchhoff} from fifty years earlier.   

For graphs on $n$ vertices, the number of cycles is clearly maximized by the complete graph, which has $\sum_{i=3}^n (i!/2i)\binom ni$ cycles.  
But what happens if we constrain the structure of $G$ by forbidding some subgraph?  In other words, what is the maximal number of cycles in an $H$-free graph on $n$ vertices  (here a graph is {\em $H$-free} if it does not contain a subgraph isomorphic to $H$)?  For graphs $G$ and $H$, let $c(G)$ be the number of cycles in $G$ and let 
$$m(n;H):= \max\{ c(G): |V(G)| = n, H \not\subseteq G\}.$$
The problem of determining $m(n,H)$  was introduced by Durocher,   Gunderson,  Li and Skala \cite{Gund2} (who studied $m(n,K_3)$)  and will be the focus of this paper.

The problem of maximizing the number of {\em edges} in an $H$-free graph has been extensively studied.   Indeed, Tur\'{a}n \cite{turan} proved that the unique $n$-vertex $K_{k+1}$-free graph with the maximum number of edges is the complete $k$-partite graph with all classes of size $\lfloor n/k\rfloor$ or $\lceil n/k \rceil$, which is known as the \emph{Tur\'{a}n graph} $T_k(n)$. More generally, the classical Tur\'{a}n problem asks for the maximum number of edges in an $H$-free graph:  this is the \emph{extremal number} $\ex(n;H)$
and the {\em extremal graphs} are $\Ex(n;H) = \left\{G : \left|V(G)\right| = n, H \not\subseteq G\right\}$, that is the $H$-free graphs on $n$ vertices with $\ex(n;H)$ edges. 
For further detail, we refer to \cite{boletg}. 

Much less is known about maximizing the number of {\em cycles} in $H$-free graphs.
Durocher, Gunderson, Li and Skala \cite{Gund2} investigated $m(n,K_3)$, and conjectured that the maximum is attained by the Tur\'an graph $T_2(n)$.
This conjecture was proved for large $n$ by Arman, Gunderson and Tsaturian \cite{Gund1}, who showed that, for $n \ge 141$, $T_2(n)$ is the unique triangle-free graph containing $m(n;K_3)$ cycles. They made the following natural further conjecture.

\begin{conj}[Arman, Gunderson and Tsaturian \cite{Gund1}]\label{gundconj}
For any $k > 1$, for sufficiently large $n$, $T_2(n)$ is the unique $n$-vertex $C_{2k+1}$-free graph containing $m(n;C_{2k+1})$ cycles.
\end{conj}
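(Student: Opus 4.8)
The plan is to induct on $n$: assume that for all $n'$ with $n_0\le n'<n$ (with $n_0=n_0(k)$ a suitable large constant) the unique $n'$-vertex $C_{2k+1}$-free graph with the most cycles is $T_2(n')$, and prove the same for $n$. Let $G$ be an $n$-vertex $C_{2k+1}$-free graph maximising $c(G)$; since $T_2(n)$ is bipartite, hence $C_{2k+1}$-free, $c(G)\ge c(T_2(n))$. First I record the features of the target: from $c_{2t}(T_2(n))=\tfrac{t!(t-1)!}{2}\binom{\lceil n/2\rceil}{t}\binom{\lfloor n/2\rfloor}{t}$, the ratio of consecutive terms shows this sum is dominated by its largest terms, so $\log c(T_2(n))=n\log n-(1+\log 2)n+O(\log n)$, the count is concentrated on cycles of length $n-o(n)$, $c(T_2(n))/c(T_2(n-1))=\Theta(n)$, and $c(T_2(a))+c(T_2(b))<c(T_2(a+b-1))$ for $a,b\ge 3$ with $a+b-1\le n$.

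Next I make two reductions. First, $G$ is $2$-connected: otherwise $c(G)=\sum_B c(B)$ over the blocks of $G$; blocks of order $\ge n_0$ satisfy $c(B)\le c(T_2(|V(B)|))$ by induction, the $O(n)$ blocks of smaller order contribute $O(1)$ each, and merging blocks using super-multiplicativity of $c(T_2(\cdot))$ (and $c(T_2(n))-c(T_2(n-1))=\Theta(n\,c(T_2(n-1)))$) yields $c(G)<c(T_2(n))$, a contradiction. Second, $\delta(G)=\Omega(n)$: the cycles through a vertex $v$ correspond to the paths of $G-v$ with both endpoints in $N(v)$, so there are at most $\binom{\deg v}{2}$ times the maximum number of paths between a fixed pair of vertices in a $C_{2k+1}$-free graph on $n-1$ vertices; feeding in a sharp bound $O(c(T_2(n-1))/n)$ for that maximum --- itself one of the delicate points --- together with $c(G)=c(G-v)+(\#\text{cycles through }v)$, $c(G-v)\le c(T_2(n-1))$ (by induction), and the gap $c(T_2(n))-c(T_2(n-1))=\Theta(n\,c(T_2(n-1)))$, forces $\deg v=\Omega(n)$. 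I expect this kind of quantitative control --- needed here and repeatedly below --- to be the \textbf{main obstacle}: naive spectral or walk-counting estimates for $c(G)$ overshoot the truth by a factor $e^{\Theta(n)}$, precisely because a cycle (or path) cannot revisit a vertex, so one must expose each cycle vertex-by-vertex and track the neighbourhood it consumes, and capturing this self-avoidance sharply enough to see that $T_2(n)$ wins is what the argument turns on.

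With $G$ now $2$-connected, of minimum degree $\Omega(n)$, and of circumference $n-o(n)$ (as $G$ has cycles of length $n-o(n)$), suppose $G$ is non-bipartite; I would derive $C_{2k+1}\subseteq G$, a contradiction, hence $G$ is bipartite. This is a cycle-surgery argument of classical type: a longest cycle together with the $\Omega(n)$ chords at one of its vertices already yields cycles of every length in some interval $[\ell_0,(1-o(1))n]$, and chaining such rerouting moves (using $2$-connectivity and the linear minimum degree) drops $\ell_0$ below $2k+1$; since $2k+1\le(1-o(1))n$ for large $n$, this gives $C_{2k+1}\subseteq G$. (Alternatively one pushes the minimum-degree bound up further and invokes a weak-pancyclicity theorem for $2$-connected non-bipartite graphs; in the general edge-critical theorem this step is supplied instead by Tur\'an-type stability plus Simonovits' exact extremal theorem, where edge-criticality of $H$ is what is used.)

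Finally, for $G$ bipartite with parts of sizes $a\le b$, $a+b=n$: monotonicity of $c(\cdot)$ gives $c(G)\le c(K_{a,b})$, and log-concavity of binomial coefficients gives $\binom at\binom bt\le\binom{\lceil n/2\rceil}{t}\binom{\lfloor n/2\rfloor}{t}$ for every $t$, with equality for all $t$ only if $\{a,b\}=\{\lceil n/2\rceil,\lfloor n/2\rfloor\}$; summing over $t$ gives $c(G)\le c(T_2(n))$, both inequalities strict unless $G=T_2(n)$. Combined with the reductions and the non-bipartite case this forces $G=T_2(n)$, completing the induction and proving Conjecture~\ref{gundconj} for all sufficiently large $n$ (and the same scheme proves the general edge-critical statement).
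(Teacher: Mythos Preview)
Your outline differs substantially from the paper's, and the step you yourself flag as the ``main obstacle'' is a genuine gap that your proposal does not close. To deduce $\delta(G)=\Omega(n)$ you need $p_{x,y}=O(c(T_2(n-1))/n)$ for the maximum number of paths between a fixed pair in a $C_{2k+1}$-free graph on $n-1$ vertices. That bound is a factor of $n$ sharper than what the greedy degree-product method (Lemmas~\ref{ref3count} and~\ref{count} in the paper) delivers: optimising $\prod r_i$ under $\sum_{i\le t}r_i\le t_2(t)$ gives $\prod_{i=2}^{n-1}\lfloor i/2\rfloor=\Theta(c(T_2(n-1)))$, essentially because the method never exploits the constraint that the path must terminate at the prescribed vertex $y$. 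With only the available bound your inequality $c(T_2(n))-c(T_2(n-1))\le\binom{\deg v}{2}\,p_{\max}$ yields merely $\deg v=\Omega(\sqrt{n})$, not $\Omega(n)$. Even granting $\delta(G)=\Omega(n)$ with an unspecified constant, the pancyclicity step is not justified: $\Omega(n)$ chords from one vertex on a long cycle give many cycle lengths but not an interval down to $2k+1$, and the weak-pancyclicity theorems you invoke need $\delta(G)\ge(n+2)/3$ or $e(G)>(n-1)^2/4+1$, neither of which you have established.

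The paper takes a different route that is robust to the polynomial slack in the path bound. It uses Lemma~\ref{count} not to control $\delta(G)$ but to show $e(G)\ge t_2(n)-O(n\log^2 n)$ directly (Lemma~\ref{edgecount}): here only the exponential rate matters, so losing a factor of $n^{O(1)}$ is harmless. It then replaces your pancyclicity argument by stability (Theorem~\ref{stable}), which makes $G$ bipartite after deleting $O(n^{0.55})$ ``irregular'' edges, and finishes by separately bounding cycles that do and do not use these deleted edges (Lemma~\ref{regcycle} and the contraction argument in the proof of Theorem~\ref{main}). Your bipartite endgame via log-concavity of $\binom{a}{t}\binom{b}{t}$ is correct and is essentially the $k=2$ content of Lemma~\ref{Turanbest}, but the paper reaches ``$G$ is close to bipartite'' by a quite different mechanism.
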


A partial result towards this conjecture is given in \cite{Gund1}, where it is shown that $m(n;C_{2k+1}) = O(c(T_2(n)))$. They also ask about a different generalisation.

\begin{qu}[Arman, Gunderson and Tsaturian \cite{Gund1}]\label{gundqu}
For $k \ge 4$, what is $m(n;K_k)$? Is $T_{k-1}(n)$ the $K_k$-free graph containing $m(n;K_k)$ cycles?
\end{qu}

In this paper we prove Conjecture~\ref{gundconj} for any fixed $k$ and sufficiently large $n$ and answer Question~\ref{gundqu} affirmatively for sufficiently large $n$. In fact we prove a much more general result. In what follows we say that an edge $e$ of a graph $H$ is \emph{critical} if  $\chi(H \backslash \{e\}) = \chi(H) - 1$. Our main result is the following.

\begin{thm}\label{main}
Let $k \ge 2$ and let $H$ be a graph with $\chi(H) = k+1$ containing a critical edge. Then for sufficiently large $n$, the unique $n$-vertex $H$-free graph containing the maximum number of cycles is the Tur\'{a}n graph $T_k(n)$.
\end{thm}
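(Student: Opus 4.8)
\medskip\noindent\emph{Sketch of the intended approach.}
The plan is to combine the stability method with a robust upper bound for the number of cycles of a graph in terms of its degree sequence, together with a Simonovits-type ``cleaning'' argument exploiting the critical edge, all organised as an induction on $n$. First, $T_k(n)$ is $H$-free, being $k$-colourable while $\chi(H)=k+1$, so $m(n;H)\ge c(T_k(n))$. A short computation fixes the order of magnitude of $c(T_k(n))$: choosing which $\ell$ vertices lie on a cycle and estimating, via the transfer matrix for proper colourings of $C_\ell$ and Stirling's formula, the number of Hamilton cycles of a nearly balanced complete $k$-partite graph, one obtains $c(T_k(n))=\bigl(\tfrac{(k-1)n}{ke}\bigr)^{n}n^{O(1)}$, and moreover that for any fixed $\eta>0$ the cycles of length at least $(1-\eta)n$ account for all but a negligible fraction of this. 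Finally, if $G$ is an $H$-free graph on $n$ vertices with $c(G)=m(n;H)$ and $G$ is disconnected, then each component is $H$-free on fewer than $n$ vertices, and the induction hypothesis together with the (routinely checked) strict super-additivity $c(T_k(a))+c(T_k(b))<c(T_k(a+b))$ gives $c(G)<c(T_k(n))$, a contradiction; so we may assume $G$ is connected.

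The main tool for upper bounds is Bregman's theorem: any graph $F$ has at most $\tfrac12\prod_{v}\bigl(d_F(v)!\bigr)^{1/d_F(v)}$ Hamilton cycles, so, writing $c_\ell(G)$ for the number of $\ell$-cycles of $G$ and using that an $\ell$-cycle is a Hamilton cycle of the subgraph it induces, $c_\ell(G)\le\tfrac12\sum_{|S|=\ell}\prod_{v\in S}\bigl(d_{G[S]}(v)!\bigr)^{1/d_{G[S]}(v)}$. Combining $(d!)^{1/d}\le\max(1,2d/e)$ and Maclaurin's inequality with $e(G)\le\ex(n;H)=(1-1/k+o(1))\binom n2$ (Erd\H{o}s--Stone--Simonovits), one shows $\sum_{\ell\le(1-\eta)n}c_\ell(G)=o\bigl(c(T_k(n))\bigr)$ — the crude factors $2^{\ell}$ and $(n/\ell)^{\ell}$ lost here are harmless since all relevant logarithms are on the scale $n\log n$. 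For the long cycles one must keep the sharp constant $(d!)^{1/d}=\tfrac de(1+o(1))$: splitting $V(G)$ according to whether a vertex has degree below a polylogarithmic threshold, between that and $\delta_0 n$ for a small constant $\delta_0>0$, or above $\delta_0 n$, and using that a vertex of degree $d$ lies in at most $\binom d2$ cycles ``locally'', one proves that if $G$ is far from $k$-partite — so that $e(G)\le(1-1/k-\varepsilon)\binom n2$ by the Erd\H{o}s--Simonovits stability theorem — then $c(G)\le(1-\varepsilon')^{n}c(T_k(n))<c(T_k(n))$ for some $\varepsilon'>0$, contradicting $c(G)\ge c(T_k(n))$. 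Hence $G$ becomes $k$-partite after deleting $o(n^2)$ edges.

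Fix a partition $V(G)=V_1\cup\dots\cup V_k$ minimising the number of edges inside the classes; this number is $o(n^2)$. Using that $G$ is extremal for $c(\cdot)$ — a vertex $v$ incident to few cross-edges of the partition can be deleted and re-inserted joined to almost all of one class, keeping $G$ $H$-free but strictly increasing the number of cycles through $v$ and hence $c(G)$, which is impossible — one shows every class has size $(1/k+o(1))n$ and every vertex has degree $(1-1/k-o(1))n$. With this regularity the Simonovits argument applies: if some class $V_i$ contained an edge $xy$, then, using a proper $k$-colouring of $H\setminus e$ with the two endpoints of $e$ in one colour class, one embeds $H\setminus e$ in $G$ with that colour class inside $V_i$, the endpoints of $e$ sent to $x$ and $y$, and the other $k-1$ classes in $k-1$ of the remaining $V_\ell$; since $x$ and $y$ are adjacent to almost all of $V(G)\setminus V_i$, this gives a copy of $H$ in $G$ for $n$ large, a contradiction. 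So $G$ is genuinely $k$-partite, $G\subseteq K_{n_1,\dots,n_k}$ with $\sum_i n_i=n$, and since every missing cross-edge of $K_{n_1,\dots,n_k}$ lies on a cycle, $c(G)\le c(K_{n_1,\dots,n_k})$ with equality iff $G=K_{n_1,\dots,n_k}$.

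The last ingredient is a convexity lemma: among complete $k$-partite graphs on $n$ vertices, $c(\cdot)$ is uniquely maximised by $T_k(n)$. It suffices to show that if $n_i\ge n_j+2$ then moving a vertex $x$ from $V_i$ to $V_j$ strictly increases the number of cycles. Cycles avoiding $x$ are unchanged, while the cycles through $x$ number $\tfrac12\sum_{\{a,b\}\subseteq N(x)}P(a,b)$, where $P(a,b)$ counts $a$--$b$ paths in $K_{n_1,\dots,n_i-1,\dots,n_j,\dots,n_k}$ (the same graph before and after the move), and $N(x)$ trades the $n_j$ vertices of the old $V_j$ for the $n_i-1$ vertices of $V_i\setminus\{x\}$; a monotonicity/smoothing analysis of $P$ then yields the strict increase. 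Putting all this together, $c(G)\ge c(T_k(n))$ forces $G=K_{n_1,\dots,n_k}$ with balanced part sizes, that is $G=T_k(n)$, which is $H$-free; this closes the induction. I expect the main obstacle to be making the Bregman-type estimates sharp enough to drive the stability and cleaning steps from a hypothesis on $c(G)$ rather than on $e(G)$ — in particular, handling vertices of moderately small degree, for which $(d!)^{1/d}\approx d/e$ fails — and, to a lesser but still real degree, establishing the monotonicity of path counts in complete multipartite graphs needed for the convexity lemma.
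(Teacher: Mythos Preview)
Your outline diverges from the paper's argument at the crucial structural step, and the divergence is where the real gap lies.

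The paper does \emph{not} try to show that an extremal $G$ is genuinely $k$-partite. Instead it first proves a very tight edge bound, $e(G)\ge t_k(n)-O(n\log^2 n)$, via an iterative path-peeling argument (Lemma~3.4 and Lemma~3.3) that uses $\ex(t;H)=t_k(t)$ for \emph{all} large $t$ simultaneously; your Bregman/Maclaurin step only yields $e(G)=(1-1/k-o(1))\binom n2$. That tight bound feeds into a quantitative stability theorem (Theorem~1.5 in the paper) to force at most $n^{0.55}$ irregular edges in the best $k$-partition. With so few irregular edges, the paper bounds separately the cycles using only regular edges (showing a deficit of $\Omega(h(T_k(n)))$, via the critical-edge argument which only gives that \emph{some} endpoint of each internal edge misses a constant fraction of one class) and the cycles using at least one irregular edge (by contracting the irregular edges and applying recursive estimates $c(T_k(n-i))\le e^{O(1)}(O(1)/n)^i h(T_k(n))$).

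Your cleaning step, by contrast, has two concrete problems. First, the ``delete and reinsert'' argument does not transfer from edge-extremality to cycle-extremality: reinserting $v$ as a twin of a high-degree vertex $w$ need not keep $G$ $H$-free (a copy of $H$ may use both $v$ and $w$), and comparing $\sum_{\{a,b\}\subseteq N(w)}P_{ab}(G\setminus v)$ with $\sum_{\{a,b\}\subseteq N(v)}P_{ab}(G\setminus v)$ is not pointwise. Second, even granting $\delta(G)=(1-1/k-o(1))n$, minimality of the partition only gives internal degree at most $d(v)/k$, not $o(n)$; so an endpoint $x$ of an internal edge may still miss a constant proportion of some other class, and your greedy embedding of $H\setminus e$ cannot be completed. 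The paper's own use of the critical edge (inside Lemma~4.2) draws exactly this weaker conclusion and stops there---it does not conclude that internal edges are absent. If you want to salvage your route you would need either a cycle-count argument strong enough to force $e(G)\ge t_k(n)-o(n^{4/3})$ (so that Theorem~1.5 yields $o(n)$ irregular edges and a direct count of cycles through them becomes feasible), or an independent way to rule out the degree defect at an internal-edge endpoint using cycle-extremality; neither is supplied. Your convexity lemma is correct and is the paper's Lemma~2.2, whose proof is already nontrivial.
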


The condition that $H$ has a critical edge is necessary, since if $H$ does not have a critical edge we can add an edge to the relevant Tur\'{a}n graph without creating a copy of $H$ (and the addition of this edge will increase the number of cycles). Conjecture~\ref{gundconj} follows from Theorem~\ref{main} as an odd cycle contains a critical edge. 

By using the same techniques as in the proof of Theorem~\ref{main}, we are able to obtain a bound on the number of cycles in an $H$-free graph for any fixed graph $H$ (not just critical ones).

\begin{thm}\label{cyclecount}
Let $k \ge 2$ and l\textit{•}et $H$ be a fixed graph with $\chi(H) =  k+1$. Then
$$ m(n;H) \le \left(\frac{k-1}{k}\right)^nn^ne^{-(1-o(1))n}.$$
\end{thm}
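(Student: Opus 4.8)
The plan is to count cycles according to length and use the classical Erd\H{o}s--Stone--Simonovits bound together with a counting inequality relating the number of cycles of a given length to the number of edges. Write $c(G) = \sum_{\ell=3}^{n} c_\ell(G)$, where $c_\ell(G)$ is the number of cycles of length exactly $\ell$ in $G$. If $H$ has chromatic number $k+1$, then any $H$-free graph $G$ on $n$ vertices has $e(G) \le \ex(n;H) = \left(1 - \frac1k + o(1)\right)\binom n2 \le \left(\frac{k-1}{k} + o(1)\right)\frac{n^2}{2}$ by Erd\H{o}s--Stone--Simonovits. The key point is then to bound $c_\ell(G)$ in terms of $e(G)$: the number of cycles of length $\ell$ is at most the number of closed walks of length $\ell$ (divided by $2\ell$), which is $\mathrm{tr}(A^\ell)/(2\ell)$ where $A$ is the adjacency matrix, and this is at most $(2e(G))^{\ell/2}\cdot(\text{something})$ — more precisely one can use $\sum_\ell c_\ell(G) x^\ell$-type generating arguments, but the cleanest route is to bound $c_\ell(G)$ by the number of ways to choose an ordered sequence of $\ell$ vertices forming a cycle, which is crudely at most $\frac{1}{2\ell}\cdot n \cdot (2e(G)/n)^{\ell}$ on average per starting vertex if $G$ were regular; in general one uses convexity.

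More concretely, I would proceed as follows. First, fix $G$ with $e := e(G) \le \left(\frac{k-1}{k} + o(1)\right)\frac{n^2}{2}$. Second, bound the number of cycles of length $\ell$: a cycle of length $\ell$ is determined by a cyclic sequence $v_1 v_2 \cdots v_\ell$, and the number of such (ordered, rooted) sequences is at most $\sum_{v_1} d(v_1) \sum \cdots$, which telescopes; a clean bound is $c_\ell(G) \le \binom{n}{\ell}\frac{(\ell-1)!}{2}$ trivially but also $c_\ell(G) \le \frac{1}{2\ell}(2e)^{\ell/2} \cdot \ell^{\ell/2}$ or similar via the spectral/walk bound $c_\ell \le \mathrm{tr}(A^\ell)/(2\ell) \le \|A\|_F^\ell/(2\ell) \cdot (\text{const})$ — here I expect to use the bound $\mathrm{tr}(A^\ell) \le (2e)^{\ell/2} n^{1-\ell/2}\cdot(\ldots)$ is too weak for small $\ell$, so I would instead split the sum at $\ell_0 = \epsilon n$: for $\ell \le \ell_0$ use the trivial bound $c_\ell(G) \le \binom n\ell \ell!/(2\ell) \le n^\ell/(2\ell)$ (this is a lower-order contribution as $n^{\ell_0}$ is dominated by $((k-1)/k)^n n^n$ when $\epsilon$ is small), and for $\ell > \ell_0$ use a bound that gains from the edge count. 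Third, for the long cycles, the number of cycles of length $\ell$ is at most $\binom n\ell$ times the number of Hamilton cycles in the relevant $\ell$-subset, and summing $\binom n\ell \cdot \frac{(\ell-1)!}{2}$ over all $\ell$ gives $\sum_\ell \frac{n!}{(n-\ell)!}\cdot\frac{1}{2\ell}$, but we must incorporate the edge restriction; here I would use that a cycle on a fixed vertex set $S$, $|S| = \ell$, requires $\ell$ edges inside $S$, and the number of cycles on $S$ is at most roughly $(2e(G[S]))^{\ell}/\ell^{\ell}\cdot e^\ell$-type bounds via the permanent/Bregman inequality, ultimately yielding the geometric-mean loss factor $(e(G)/\binom n2)^{\Theta(\ell)} \le ((k-1)/k + o(1))^{\Theta(n)}$ for $\ell = \Theta(n)$.

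I expect the main obstacle to be getting the constant in the exponent exactly right: proving the $((k-1)/k)^n$ factor (rather than a worse base) requires that the dominant contribution to $c(G)$ comes from cycles of length $(1-o(1))n$, and for those one needs a bound like "number of cycles through a fixed vertex set of size $\ell$ is at most $\frac{1}{2}(\ell-1)!\cdot p^\ell$" where $p$ is (essentially) the edge density, which follows from an entropy or Bregman-type argument applied to the bipartite-like structure of the cycle, or more simply from the fact that the number of Hamilton cycles in a graph with $m$ edges on $\ell$ vertices is at most $O\!\left((2m/\ell)^\ell\right)$ by a greedy/AM-GM count on degree products $\prod_i d_i \le (2m/\ell)^\ell$. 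Combining $\sum_{\ell} \binom n\ell \cdot \frac{1}{2}(2e/n)^\ell$ with $2e/n \le \frac{k-1}{k}n$ and $\binom n\ell \le 2^n$, the dominant term at $\ell \approx n$ gives $\binom nn \cdot \frac12 \left(\frac{k-1}{k}n\right)^n = \left(\frac{k-1}{k}\right)^n n^n / 2$, and the Stirling correction from summing over $\ell$ near $n$ contributes only the $e^{-(1-o(1))n}$ factor (since $\binom{n}{n-j}(2e/n)^{n-j} \approx \frac{n^j}{j!}\left(\frac{k-1}{k}n\right)^{n-j}$ and $\sum_j \frac{1}{j!}\left(\frac{k}{k-1}\cdot\frac1n\right)^{-j}$... ) — in fact the $e^{-n}$ really comes from comparing $n^n$ to $n!$, i.e.\ the difference between "ordered cycle" and actual structure; so the final bookkeeping is to verify $\sum_{\ell=3}^n c_\ell(G) \le (1+o(1)) \cdot \frac{1}{2}\left(\frac{k-1}{k}\right)^n n^n e^{-n}$, which I would do by Stirling's formula applied to the maximizing term.
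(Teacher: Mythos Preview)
Your approach has a genuine gap: using only the global edge bound $e(G)\le(1+o(1))\tfrac{k-1}{2k}n^2$ from Erd\H{o}s--Stone--Simonovits, together with degree-product/AM--GM bounds of the type $h(G)\le O\bigl((2e(G)/n)^n\bigr)$ or the Arman--Tsaturian bound $c(G)\le \tfrac34\Delta\,(m/(n-1))^{n-1}$, you obtain at best
\[
c(G)\;\le\;\Bigl(\tfrac{k-1}{2k}\,n\Bigr)^{(1+o(1))n}\;=\;\Bigl(\tfrac{k-1}{k}\Bigr)^n n^n\,2^{-(1+o(1))n},
\]
which is weaker than the claimed bound by a factor of $(e/2)^{(1+o(1))n}$. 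The trivial bound $c_n(G)\le (n-1)!/2\sim n^n e^{-n}/(2\sqrt{n})$ does supply the factor $e^{-n}$, but carries no trace of the density restriction and hence no $\bigl(\tfrac{k-1}{k}\bigr)^n$. Your final paragraph tries to recover both at once (``the $e^{-n}$ really comes from comparing $n^n$ to $n!$''), but neither ingredient delivers both simultaneously, and taking a minimum does not help at the dominant length $\ell=n$. In fact, proving $c(G)\le (2m/(en))^{(1+o(1))n}$ for general $n$-vertex graphs with $m$ edges is exactly the open Arman--Tsaturian conjecture mentioned in the concluding section, so this route would require a genuinely new idea.

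The paper's proof instead exploits the $H$-free hypothesis \emph{hereditarily}: every induced subgraph of $G$ is also $H$-free. A greedy peeling argument (Lemma~\ref{ref3count}) bounds $p_{x,y}(G)$ by $\prod_{i=2}^n\max\{r_i,1\}$ where the $r_i$ satisfy not just $\sum_i r_i\le m$ but the whole family of constraints $\sum_{i\le t}r_i\le \ex(t;H)\le (1+\varepsilon)t_k(t)$ for every $t$. Under all of these constraints the optimum forces $r_i\approx t_k(i)-t_k(i-1)\approx \tfrac{k-1}{k}(i-1)$, so the product is essentially $\bigl(\tfrac{k-1}{k}\bigr)^{n-1}(n-1)!$, and Stirling then gives the full factor $e^{-(1-o(1))n}$. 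The point is that a single edge bound on $G$ pins down only the \emph{average} of the $r_i$, and AM--GM on that is lossy by exactly $(e/2)^n$; the cascade of constraints on all prefixes forces the $r_i$ to \emph{grow linearly}, and it is $\prod_i i \sim n!$ rather than $\prod_i (n/2)=(n/2)^n$ that produces the correct exponent.
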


The Tur\'{a}n graph gives a lower bound showing that this bound is tight up to the $o(1)$ term in the exponent.

In this paper we concern ourselves with maximising cycles of any length in a graph with a forbidden subgraph. The related problem of maximising copies of a single graph in a graph with a collection of forbidden subgraphs has received a great deal of attention. For a graph $G$ and family of graphs $\mathcal{F}$, define $\ex(n,G,\mathcal{F})$ to be the maximum possible number of copies of $G$ in a graph containing no member of $\mathcal{F}$.
The value of $\ex(n,G,\mathcal{F})$ is of particular interest when the graphs being studied are cycles (see \cite{NACS,BB,ER} for results concerning other graphs). 
Improving on earlier work of Bollob\'{a}s and Gy\H{o}ri \cite{BG} and Gy\H{o}ri and Li \cite{GL}, Alon and Shikhelman \cite{NACS} gave bounds for $\ex(n,K_3,C_{2k+1})$, when $k \ge 2$. Using flag algebras, Hatami, Hladk\'{y}, Kr\'al', Norine, and Razborov \cite{HHKNR} showed that the unique triangle-free graph with maximum number of copies of $C_5$ is the balanced blow up of $C_5$. Also using flag algebras, Grzesik \cite{GG} determined $\ex(n,C_5,K_3)$. More recently, Grzesik and Kielak \cite{AGBK} determined $\ex(n,C_{2k+1},\mathcal{F})$, where $k \ge 3$ and $\mathcal{F}$ is the family of odd cycles of length at most $2k-1$. They also asymptotically determine $\ex(n,C_{2k+1},C_{2k-1})$.

The rest of paper is organised as follows. Section~\ref{seckpart} contains a number of lemmas about counting cycles in complete $k$-partite graphs (Lemmas~\ref{kKMain}-\ref{secondcount}). These will be used in Section~\ref{secmain} for the proof of Theorem~\ref{main}. The statements are very natural but our proofs are unfortunately technical, so we defer these to Section~\ref{sectech}. In Section \ref{sec2} we prove Lemma~\ref{easycor} and use similar techniques to prove Theorem \ref{cyclecount}. The proof of Theorem~\ref{main} is completed in Section \ref{secmain}. We conclude the paper in Section \ref{sec5} with some related problems and open questions.  We conclude the current section with a sketch of the proof of Theorem \ref{main}.

\subsection{Outline of Proof}

In what follows we fix $H$ to be a graph with $\chi(H) = k+1$ that contains a critical edge and assume that $n$ is sufficiently large. As usual, for a graph $F$ we will write $e(F):= |E(F)|$ and in the particular case of the Tur\'{a}n graph, we will write $t_k(n):= |E(T_k(n))|$. Let $G$ be an $n$-vertex $H$-free graph with $c(G) = m(n;H)$. As $T_k(n)$ is $H$-free, we have that $m(n;H) \ge c(T_k(n))$. We will suppose that $G$ is not $T_k(n)$ and obtain a contradiction by showing that $c(G) < c(T_k(n))$.

 The first step in the proof (Lemma~\ref{edgecount}) is to show that $G$ with $c(G) \ge c(T_k(n))$ contains at least $e(T_k(n)) - O(n \log^2 n)$ edges. In order to prove this, we will need a bound on the number of cycles an $n$-vertex $H$-free graph with $m \ge \beta(H)\cdot n$ edges can contain, where $\beta$ is some constant depending on $H$. Such a bound is provided by Lemma~\ref{easycor}. 

Given Lemma~\ref{edgecount}, we are able to apply the following stability result from~\cite{stabAA}.

\begin{thm}[Theorem 1.4 \cite{stabAA}]\label{stable}
Let $H$ be a graph with a critical edge and $\chi(H) = k+1 \ge 3$, and let $f(n) = o(n^2)$ be a function.
If $G$ is an $H$-free graph with $n$ vertices and $e(G) \ge t_k(n) - f(n)$ then $G$ can be made $k$-partite by deleting $O(n^{-1}f(n)^{3/2})$ edges.
\end{thm}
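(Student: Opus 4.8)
The plan is to bootstrap the qualitative Erd\H{o}s--Simonovits stability theorem into the stated quantitative bound by exploiting the critical edge. First I would record two facts about a graph $H$ with $\chi(H)=k+1$ and critical edge $uv$: by Simonovits' theorem $\mathrm{ex}(n,H)=t_k(n)$ exactly once $n$ is large (so $e(G)\le t_k(n)$), and there is an \emph{embedding lemma} --- for suitable constants $t=t(H)$ and $\varepsilon=\varepsilon(H)>0$: if a graph $F$ admits a partition into $k$ parts of sizes $(1\pm\varepsilon)n/k$ with at most $\varepsilon n^2$ non-edges between parts, and $F$ contains an edge $xy$ inside one of the parts such that $x$ and $y$ have at least $t$ common neighbours in each of the other $k-1$ parts, then $H\subseteq F$. (One proves this by embedding $u\mapsto x$, $v\mapsto y$ and then greedily placing the remaining colour classes of $H-uv$ into the near-complete $k$-partite skeleton, putting the rest of the class of $u$ into a bounded independent set inside the part of $x,y$.)

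Given this, I would apply Erd\H{o}s--Simonovits stability to $e(G)\ge t_k(n)-f(n)$ to get a $k$-partition of $V(G)$ with $o(n^2)$ edges inside parts, then pass to a $k$-partition $\mathcal P=(V_1,\dots,V_k)$ \emph{minimising} the number $b$ of edges inside parts. Counting edges gives $e(G)=\binom n2-\sum_i\binom{|V_i|}{2}-m+b$, where $m$ is the number of non-edges between parts, so $e(G)\ge t_k(n)-f$ forces $m$ and the imbalance $\sum_i(|V_i|-n/k)^2$ to be $O(b+f)$, whence also $|V_i|=n/k+o(n)$; and minimality gives $|N(v)\cap V_{\mathrm{part}(v)}|\le|N(v)\cap V_j|$ for every $v$ and $j$. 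Since $b,m=o(n^2)$, $\mathcal P$ meets the hypotheses of the embedding lemma, so as $G$ is $H$-free every edge $xy$ inside a part violates its hypothesis: there is a part $V_j$ with $\mathrm{nondeg}_{V_j}(x)+\mathrm{nondeg}_{V_j}(y)>n/k-o(n)$, so some endpoint --- call it \emph{heavy} --- misses $\Omega(n)$ vertices of some part. By minimality a heavy vertex has internal degree at most $(1-\Omega(1))n/k$, and there are $O(m/n)$ heavy vertices.

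The crux --- and by far the hardest step --- is to turn this per-edge structure into the sharp count $b=O(n^{-1}f^{3/2})$. The obstacle is that the obvious bound $b\le\sum_{v\text{ heavy}}(\text{internal degree of }v)=O(m)=O(b+f)$ is vacuous, because the implicit constant exceeds $1$; a union bound over heavy vertices cannot work, and one needs a global argument. The target estimate is that a $k$-partition with $b$ internal edges cannot be present in an $H$-free graph without costing, in missing cross-edges plus imbalance, a total of order $(bn)^{2/3}$; fed into $m+(\text{imbalance})\le b+f$ together with $b=o(n^2)$ this first yields $b\le f$ and then $b=O(n^{-1}f^{3/2})$. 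Proving such an estimate requires understanding how the defects of the heavy vertices overlap, both inside a fixed part and across parts, stratifying heavy vertices by the size of their defect, and accounting for the imbalance; whether this is cleanest via a weighted charging argument, via a defect form of Tur\'an's theorem applied inside each part, or via an iterative cleaning of $G$ is the main question to settle.
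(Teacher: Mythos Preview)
The paper does not prove this statement at all: it is quoted verbatim as Theorem~1.4 of \cite{stabAA} and used as a black box in the proof of Theorem~\ref{main}. So there is no ``paper's own proof'' to compare your proposal against.

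As for the proposal itself, the set-up through the embedding lemma and the minimising partition is standard and sound, and your edge-counting identity correctly gives $m+(\text{imbalance})\le b+f$. You have also correctly identified that the entire content of the theorem lies in the step you call ``the crux'': the inequality of the shape $m+(\text{imbalance})\gtrsim (bn)^{2/3}$ (or equivalently $b\lesssim n^{-1}(m+\text{imbalance})^{3/2}$). Everything before that is bookkeeping, and you have not actually proved this step --- you end with three candidate strategies and no argument. In particular, the per-edge observation that one endpoint of each internal edge is heavy, combined with the bound on the number of heavy vertices, only recovers $b=O(m)$, which as you note is vacuous; getting the exponent $3/2$ genuinely requires controlling how the defects of distinct heavy vertices interact, and your sketch does not do this. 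So at present this is an outline of the easy half of the proof together with a clear statement of what remains, not a proof.
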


Since we have $f(n) = O(n\log^2 n)$, this will imply that $G$ is a sublinear number of edges away from being $k$-partite. We then take a $k$-partition of $G$ which minimises the number of edges within classes and carefully bound (given that $G$ is not $T_k(n)$) the number of cycles $G$ can contain that do not use edges within classes (Lemma~\ref{regcycle}). We conclude the proof by separately counting the cycles in $G$ that use edges within classes and observing that the total number of cycles in $G$ is not large enough, a contradiction.

\section{Counting Cycles in Complete $k$-partite Graphs}\label{seckpart}
In this section we state some results about the number of cycles in complete $k$-partite graphs. These are needed in Section~\ref{secmain} for the proof of Theorem~\ref{main}, but may be of independent interest. Despite the simplicity of the statements, the proofs are annoyingly technical, and so we will give them later in Section~\ref{sectech}.

The first gives a bound on the number of cycles in $T_k(n)$. In what follows we write $h(G)$ for the number of Hamiltonian cycles in $G$ (a Hamiltonian cycle of a graph is a cycle covering all of the vertices). We also define $c_r(G)$ to be the number of cycles of length $r$ in $G$. 
\begin{lem}\label{kKMain}
$$c_{2\lfloor n/2 \rfloor}\left(T_2(n)\right) \sim \pi2^{-n}n^ne^{-n},$$
and for fixed $k \ge 3$, 
$$h(T_k(n)) = \Omega\left(\left(\frac{k-1}{k}\right)^nn^{n-\frac{1}{2}}e^{-n}\right).$$

\end{lem}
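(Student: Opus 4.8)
The plan is to reduce both estimates to counting Hamiltonian (or near-Hamiltonian) cycles in a balanced complete multipartite graph and then evaluate the resulting sum by Stirling's formula. For the case $k=2$ with $n$ even, a cycle of length $n$ in $T_2(n) = K_{n/2,n/2}$ is a closed sequence alternating between the two parts, so counting these is equivalent to counting pairs of permutations up to the symmetries of a cycle; concretely $c_n(K_{n/2,n/2}) = \tfrac{1}{2n}\cdot n \cdot ((n/2)!)^2 \cdot (\text{small correction})$, and the asymptotics follow from $((n/2)!)^2 \sim 2\pi (n/2)(n/(2e))^n = (\pi n/2)(n/2)^n e^{-n}$, which after dividing by the cyclic/reflective symmetry factor gives $\pi 2^{-n} n^n e^{-n}$. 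When $n$ is odd one instead counts cycles of length $n-1$, picking which vertex of the larger class to omit; this contributes a factor that is absorbed into the $\sim$. So the first estimate is essentially a bookkeeping exercise with Stirling.

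For $k \ge 3$ I would set up the count of $h(T_k(n))$ as follows. Write $n = qk + s$ with $0 \le s < k$, so $T_k(n)$ has $s$ classes of size $q+1$ and $k-s$ of size $q$. A Hamiltonian cycle visits the $n$ vertices in some cyclic order, and the only constraint is that consecutive vertices (including the wrap-around pair) lie in different classes. The standard way to count such sequences is: first count the number of cyclic sequences of \emph{class-labels} of the prescribed multiplicities with no two adjacent labels equal (a proper cyclic colouring count, or equivalently a transfer-matrix / inclusion-exclusion computation), then multiply by $\prod_i (n_i!)$ for the orderings within each class, and divide by $2n$ for the dihedral symmetry of the cycle. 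The number of admissible label-sequences is at least a constant fraction of $\binom{n}{n_1,\dots,n_k}$ — this is the point where one needs a genuine (if crude) lower bound: since each class has size $\le q+1 = (1+o(1))n/k$, a positive proportion of all arrangements of the multiset of labels avoid monochromatic adjacencies (e.g. by a second-moment or Lovász-Local-Lemma-type argument, or simply by noting the probability a random arrangement is proper is bounded below). Combining, $h(T_k(n)) \ge c \cdot \frac{1}{2n}\binom{n}{n_1,\dots,n_k}\prod_i n_i! = c\cdot \frac{n!}{2n} = \Omega(n!/n)$ — wait, that over-counts; the correct combination is $h(T_k(n)) \gtrsim \frac{1}{2n}\cdot(\text{admissible label-sequences})\cdot\prod_i n_i!$, and admissible sequences number $\Omega\big(\binom{n}{n_1,\dots,n_k}\big)$ only gives $\Omega(n!/n)$, which is too large, so the right statement is that admissible sequences are a $\big(\frac{k-1}{k}\big)^{n}$-fraction, heuristically, of all $\binom{n}{n_1,\dots,n_k}$ arrangements; I would make this precise via the transfer matrix whose dominant eigenvalue governs the exponential rate. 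Either way, after applying Stirling to $\prod_i n_i!$ with all $n_i = (1+o(1))n/k$ one gets $\prod_i n_i! = \big((n/k)!\big)^{k(1+o(1))} = \Theta(n^{1/2})\cdot (n/k)^n e^{-n}$ up to subexponential factors, and multiplying by the $\big(\frac{k-1}{k}\big)^n$-type combinatorial factor and dividing by $2n$ yields $\Omega\big(\big(\tfrac{k-1}{k}\big)^n n^{n-1/2} e^{-n}\big)$.

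The main obstacle is pinning down the constant-order (or sub-polynomial) lower bound on the number of proper cyclic arrangements of the class-labels: one must show this count is at least $c\,\big(\frac{k-1}{k}\big)^n \binom{n}{n_1,\dots,n_k}$ for some $c>0$, uniformly in the way $n$ splits modulo $k$. I expect this to follow either from an explicit transfer-matrix eigenvalue computation for the "no equal neighbours" constraint on a cyclic word with prescribed letter frequencies, or from a direct probabilistic argument (reveal the labels one at a time; the chance of a clash at each step is at most $(q+1)/(n - \text{used})$, and a careful union/martingale bound keeps the success probability bounded below). Everything else — the reduction to label-sequences, the $\prod n_i!$ factor for intra-class orderings, the $2n$ symmetry quotient, and the final Stirling estimate — is routine, and for $k = 2$ the label-sequence count is exact, which is why that case can be stated with $\sim$ rather than merely $\Omega$.
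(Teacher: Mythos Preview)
Your framework matches the paper's exactly. For $k=2$ the paper, like you, counts directly --- obtaining $c_{2\lfloor n/2\rfloor}(T_2(n)) = \frac{\lfloor n/2\rfloor!\,\lceil n/2\rceil!}{4\lfloor n/2\rfloor}$ --- and applies Stirling. For $k\ge 3$ the paper also writes $h(T_k(n)) = \frac{n!}{2n}\,\bP[C_{n,k}\in Q \mid C_{n,k}\in P_{n,k}]$, where $C_{n,k}$ is a uniform word in $[k]^n$, $Q$ is the event of no cyclic adjacent repeats, and $P_{n,k}$ is the event of balanced letter frequencies; this is precisely your ``count proper class-label sequences, multiply by $\prod_i n_i!$, divide by $2n$''. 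You have correctly located the crux: one must show the proper arrangements are at least a $c\bigl(\tfrac{k-1}{k}\bigr)^n$ fraction of the $\binom{n}{n_1,\dots,n_k}$ total.

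The gap is that none of your suggested tools obviously delivers this. The Local Lemma gives a positive lower bound on $\bP[Q\mid P_{n,k}]$ but not the correct exponential rate, and the dependency structure is awkward because the multiset constraint makes all positions weakly dependent. A transfer matrix handles $Q$ cleanly for i.i.d.\ letters but not under the fixed-frequency conditioning without a further coefficient extraction. Your ``reveal one letter at a time'' sketch likewise works for i.i.d.\ letters, not for a random permutation of a fixed multiset. The paper sidesteps all of this with a single correlation inequality (its Lemma~\ref{major}):
\[
\bP[C_{n,k}\in Q \mid C_{n,k}\in P_{n,k}] \;\ge\; \bP[C_{n,k}\in Q],
\]
i.e.\ conditioning on balanced frequencies can only \emph{help} the no-repeat event. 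The proof is a short but genuine combinatorial argument: for any $\underline{c}$ with $c_i\le c_j-2$, moving one unit from class $j$ to class $i$ does not decrease $\bP[Q\mid P_{\underline{c}}]$; this is shown by conditioning on the positions not occupied by $i$ or $j$ and computing the conditional probability of $Q$ exactly in terms of the numbers of odd- and even-length $\{i,j\}$-runs. Once the conditioning is removed, the bound $\bP[C_{n,k}\in Q]\ge \frac{k-2}{k}\bigl(\frac{k-1}{k}\bigr)^{n-2}$ is immediate (your ``reveal one at a time'' now applies), and Stirling finishes. This decoupling lemma is the idea your proposal is missing.
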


Since $c(G) \ge h(G)$ for all $G$, if follows that $c(T_k(n)) = \Omega\left(\left(\frac{k-1}{k}\right)^nn^{n-\frac{1}{2}}e^{-n}\right).$ Arman \cite[Theorems 5.22 and 5.26]{arman} proves similar results here and also provides an upper bound for $c(T_k(n))$.

\begin{lem}\label{Turanbest}
Let $k \ge 2$ and G be an $n$-vertex $k$-partite graph. Then for any $r$, $c_r(T_k(n)) \ge c_r(G)$. Furthermore, when $n \ge 5,$ $c(T_k(n)) > c(G)$ for any $n$-vertex $k$-partite graph $G$ not isomorphic to $T_k(n)$.
\end{lem}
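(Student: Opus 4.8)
The plan is to reduce to \emph{complete} $k$-partite graphs, show that among these the balanced one is optimal via a vertex-shifting (compression) argument, and then extract uniqueness for $n\ge 5$ from the strictness of a single shift together with $2$-connectivity of $T_k(n)$. For the reduction: if $G$ is $k$-partite with parts $V_1,\dots,V_k$, then $G$ is a subgraph of the complete $k$-partite graph $G^\ast:=K_{|V_1|,\dots,|V_k|}$ on the same vertex set, so $c_r(G)\le c_r(G^\ast)$ for every $r$, with equality throughout only if $G=G^\ast$. Hence it suffices to prove that among complete $k$-partite graphs on $n$ vertices each $c_r$ is maximised by $T_k(n)$, and (for $n\ge 5$) that the total cycle count is \emph{strictly} maximised by $T_k(n)$.

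For the shift, let $\Gamma_0=K_{n_1,\dots,n_k}$ with parts $P_1,\dots,P_k$ and $|P_1|=a\ge b+2=|P_2|+2$; pick $w\in P_1$ and let $\Gamma_1=K_{n_1-1,\,n_2+1,\,n_3,\dots,n_k}$ be obtained by moving $w$ from $P_1$ to $P_2$ (on the same vertex set). The only edges that change are those at $w$, so every cycle avoiding $w$ occurs in both graphs, while cycles of length $r$ through $w$ in $\Gamma_0$ (resp.\ $\Gamma_1$) correspond bijectively to paths on $r-1$ vertices in $\Gamma_0-w$ (resp.\ $\Gamma_1-w$) whose two endpoints lie outside $P_1\setminus\{w\}$ (resp.\ outside $P_2$). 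The key observation is that $\Gamma_0-w=\Gamma_1-w=:\Gamma$, since deleting $w$ leaves the parts $P_1\setminus\{w\},P_2,P_3,\dots,P_k$ in both cases. Writing $A:=P_1\setminus\{w\}$ and $B:=P_2$, so that $|A|=a-1>b=|B|$, it therefore suffices to show: in a complete multipartite graph $\Gamma$ with two parts $A,B$ satisfying $|A|>|B|$, for every $s$ the number of $s$-vertex paths with both endpoints outside $A$ is at most the number with both endpoints outside $B$.

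\textbf{I expect this last inequality to be the main obstacle}, and presumably the reason the authors call the arguments of this section technical. I would prove it by an injection from paths with both endpoints outside $A$ into paths with both endpoints outside $B$: a path whose endpoints already avoid both $A$ and $B$ is fixed, and otherwise one reroutes an endpoint $y\in B$ to a suitable vertex $x\in A$, exploiting $|A|>|B|$ to find room and making canonical choices to stay injective. The difficulty is that the naive swap can fail because $y$'s neighbour on the path may itself lie in $A$, so one must follow the path past that vertex and perform a longer rotation (or argue via a Hall-type matching on the relevant incidence structure); when $k=2$ the counts in question have clean product formulas and the inequality is immediate from $|A|>|B|$, and it is exactly the presence of the other parts that creates the trouble. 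Granting the inequality, summing over cycles through and avoiding $w$ gives $c_r(\Gamma_1)\ge c_r(\Gamma_0)$ for all $r$; each shift strictly decreases $\sum_i n_i^2$, so iterating terminates at the unique balanced composition, namely $T_k(n)$, while no $c_r$ ever decreases — which, with the reduction, proves the first assertion.

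Finally, for $n\ge 5$ let $G\ne T_k(n)$ be $k$-partite; I claim $c(G)<c(T_k(n))$. If the part sizes of $G$ are unbalanced then $G^\ast\ne T_k(n)$, and one checks by direct computation that the first shift above strictly increases some $c_r$ with $r\in\{3,4\}$: for $k\ge 3$, writing the triangle count as $c_3=e_3(\mathrm{rest})+(a+b)\,e_2(\mathrm{rest})+ab\,e_1(\mathrm{rest})$ where $\mathrm{rest}$ denotes the other part sizes, the shift fixes $a+b$ but increases $ab$, and $e_1(\mathrm{rest})=n-a-b>0$; for $k=2$, $c_4(K_{a,b})=\binom a2\binom b2$ and $\binom{a-1}2\binom{b+1}2>\binom a2\binom b2$ precisely when $a\ge b+2$ (the case $b\le 1$ being handled by noting that $c_4(T_2(n))>0$ and that $c_4$ never decreases under shifting). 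Since no $c_r$ decreases along the shifts, $c(G)\le c(G^\ast)<c(T_k(n))$. If instead the part sizes of $G$ are balanced, then $G^\ast=T_k(n)$ but $G\subsetneq T_k(n)$, so $G$ misses an edge $e$ of $T_k(n)$; since $T_k(n)$ has minimum degree at least $\lfloor n/2\rfloor\ge 2$ it is $2$-connected, so $e$ lies on a cycle and $c(G)\le c(T_k(n)-e)<c(T_k(n))$. This settles every $G\ne T_k(n)$.
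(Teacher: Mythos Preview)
Your overall framework is right and coincides with the paper's: reduce to complete $k$-partite graphs, then balance the part sizes one shift at a time, showing each $c_r$ is monotone under such a shift. Your observation that $\Gamma_0-w=\Gamma_1-w$ and that cycles through $w$ correspond to paths in this common graph with endpoints avoiding the relevant part is exactly the right reduction.

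The genuine gap is precisely where you flag it: the inequality ``paths with both endpoints outside $A$'' $\le$ ``paths with both endpoints outside $B$'' when $|A|>|B|$. Your injection sketch is not a proof, and the obstacles you name (the path may enter $A$ immediately after a $B$-endpoint; $k\ge3$ breaks the clean product formulas) are real. The paper does \emph{not} prove this by a direct path injection. Instead it expresses $c_{r,v}(K)$ as a sum, over choices of how many vertices $a_i$ are taken from each part, of $\prod_i\binom{c_i}{a_i}\cdot h(K_{\underline a+\underline e_1})$, and likewise for $c_{r,v'}(K')$ with $h(K_{\underline a+\underline e_2})$. Pairing the term for $(a_1,a_2)$ with that for $(a_2,a_1)$, the difference factors as a binomial-coefficient expression times $h(K_{\underline a'})-h(K_{\underline a})$ where $\underline a'$ is more balanced than $\underline a$; both factors are nonnegative, the second by the Hamiltonian-count monotonicity \eqref{ham1} established earlier via the random-code argument of Lemma~\ref{major}. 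So the missing ingredient in your proposal is supplied in the paper by an independent lemma on Hamilton cycles, not by a combinatorial injection on paths.

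Your uniqueness argument is different from the paper's and largely cleaner. The paper extracts strictness by going back into the random-code computation and exhibiting, in the final shift, a Hamilton cycle with an $ij$-transition (forcing strict inequality in \eqref{ham1}). Your route via a strict increase in $c_3$ (for $k\ge3$) or $c_4$ (for $k=2$), together with $2$-connectivity of $T_k(n)$ for the ``balanced but non-complete'' case, is more elementary. One caveat: your $c_3$ argument assumes $e_1(\mathrm{rest})>0$, i.e.\ that there is a third nonempty part; if the unbalanced complete $k$-partite graph actually has at most two nonempty parts you must fall back on the bipartite $c_4$ computation, so phrase the case split by the number of nonempty parts rather than by the ambient $k$.
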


In particular, Lemma \ref{Turanbest} implies that the Tur\'{a}n graph $T_k(n)$ has the most Hamilton cycles amongst all $k$-partite graphs on $n$ vertices.

In order to state the next few lemmas we require some more technical definitions. For $\underline{a} = (a_1,\ldots,a_k) \in \mathbb{N}^k$, we define $K_{\underline{a}}$ to be the complete $k$-partite graph with vertex classes $V_1,\ldots, V_k$, where $|V_i| = a_i$. Let $v$ be some vertex in $V(K_{\underline{a}})$. We define $h_v(j,K_{\underline{a}})$ to be the number of permutations $v_1 \cdots v_n$ of the vertices of $K_{\underline{a}}$, such that $v_1 = v$, $v_2 \in V_j$ and $v_1 \cdots v_n$ is a Hamilton cycle (we count permutations rather than cycles, so that we count a cycle $v_1\cdots v_n$ with $v_2$ and $v_n$ from the same vertex class twice). Note that if we count the Hamilton cycles by considering $v_1\cdots v_n$ with $v_1$ fixed, by counting the number of cycles visiting each other vertex class first, then each cycle will be counted twice due to the choice of orientation. So for $v \in V_i$, we have
	\begin{align}
		h(K_{\underline{a}}) = \frac{1}{2}\sum_{j \neq i}h_v(j,K_{\underline{a}}).\label{first1}
	\end{align}

The next lemma will allow us to count cycles more accurately in complete $k$-partite graphs that are not balanced.
\begin{lem}\label{close}
Let $k \ge 3$. Let $\underline{b} = (b_1,\ldots,b_k)$, $\underline{c} = (c_1,\ldots,c_k) \in \bN^k$ be such that $b_i \ge b_j$ if and only if $c_i \ge c_j$, and that $K_{\underline{b}} \cong T_k(n)$. Denote the vertex classes of $K_{\underline{c}}$ by $V_1,\ldots,V_k$, and vertex classes of $K_{\underline{b}}$ by $V_1',\ldots, V_k'$. Then if $v \in V_1, w \in V_1'$, then
	\begin{align}
		h_v(2,K_{\underline{c}}) \le h_w(2,T_k(n))\prod_{i =1}^ke^{\left|\log(\frac{b_i}{c_i})\right|}. \nonumber
	\end{align}
\end{lem}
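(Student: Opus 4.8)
The plan is to write $h_v(2,K_{\underline{c}})$ as a sum over the "pattern" of a Hamilton permutation — that is, the sequence recording which vertex class each successive vertex lies in — and to compare this term-by-term with the corresponding sum for $T_k(n)$. More precisely, for a Hamilton permutation $v_1\cdots v_n$ of $K_{\underline{c}}$ with $v_1=v\in V_1$ and $v_2\in V_2$, let the \emph{type} be the word $w=(w_1,\ldots,w_n)\in[k]^n$ with $w_t$ the index of the class containing $v_t$; a word is \emph{admissible} if consecutive letters differ, $w_1=1$, $w_2=2$, and each letter $i$ appears exactly $c_i$ times. Given an admissible word $w$, the number of permutations of $K_{\underline{c}}$ realising it is exactly $\prod_i (c_i-1)! \cdot c_i$ — wait, more carefully: once we fix that $v_1=v$, the remaining $c_1-1$ vertices of $V_1$ can be slotted into the positions with letter $1$ in $(c_1-1)!$ ways, and for $i\neq 1$ the $c_i$ vertices of $V_i$ can be placed in $c_i!$ ways. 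Hence
\begin{align}
h_v(2,K_{\underline{c}}) = (c_1-1)!\prod_{i=2}^k c_i! \cdot N_2(\underline{c}), \nonumber
\end{align}
where $N_2(\underline{c})$ is the number of admissible words for $\underline{c}$. The key combinatorial observation is that the set of admissible words depends only on the \emph{ordering} of the coordinates of $\underline{c}$, not their exact values — and since $b_i\ge b_j \iff c_i\ge c_j$, the words admissible for $\underline{c}$ are precisely those admissible for $\underline{b}$ once we account for multiplicities. This is not literally true because the multiplicities differ, so instead I would set up a surjection (or an injection the other way) between admissible words that changes the count in a controlled way; the cleanest route is to bound $N_2(\underline{c})/N_2(\underline{b})$ directly.

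The second ingredient is purely arithmetic: compare the factorial prefactors. We have
\begin{align}
\frac{h_v(2,K_{\underline{c}})}{h_w(2,T_k(n))} = \frac{(c_1-1)!\prod_{i\ge 2}c_i!}{(b_1-1)!\prod_{i\ge 2}b_i!}\cdot\frac{N_2(\underline{c})}{N_2(\underline{b})}, \nonumber
\end{align}
and I would show each of the two factors on the right is at most $\prod_i e^{|\log(b_i/c_i)|}$ raised to appropriate partial powers, or — more realistically — bound the product of both by $\prod_i e^{|\log(b_i/c_i)|}$. For the factorial factor, one uses that $c_i!/b_i!$ is a product of consecutive integers lying between $\min(b_i,c_i)$ and $\max(b_i,c_i)$, so $c_i!/b_i! \le \max(b_i,c_i)^{|b_i-c_i|}$ (when $c_i>b_i$) and the crude bound $m^{|b_i-c_i|}\le \exp(|b_i-c_i|\log m)$; since $\sum_i b_i=\sum_i c_i=n$ and the $b_i$ are as equal as possible, $\sum_i |b_i-c_i|$ and the relevant logarithms can be folded into $\sum_i |\log(b_i/c_i)|\cdot(\text{something like }c_i)$. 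Getting the bookkeeping to land exactly on $\prod_i e^{|\log(b_i/c_i)|}$ with no extra polynomial factors is the delicate point, and I expect the real proof juggles $h_v(2,\cdot)$ against the slightly different normalisation where vertices within a class are \emph{not} distinguished, precisely to make the factorials cancel.

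\textbf{Main obstacle.} The hard part will be controlling the ratio $N_2(\underline{c})/N_2(\underline{b})$ of numbers of admissible types. One cannot expect these to be equal — a word admissible for $\underline{b}$ needs the right number of copies of each letter, and changing $b_i$ to $c_i$ changes that. The natural fix is an injection from types-for-$\underline{c}$ into types-for-$\underline{b}$ obtained by deleting (or duplicating) occurrences of each letter, but one must check this does not create two equal consecutive letters and must bound the multiplicity of the injection; that multiplicity is roughly $\prod_i \binom{\max(b_i,c_i)}{|b_i-c_i|}$, which again must be absorbed into $\prod_i e^{|\log(b_i/c_i)|}$. Making these deletions/insertions while preserving admissibility, and verifying the resulting blow-up is no worse than the claimed product, is where essentially all the work lies; the rest is routine manipulation of factorials and the Stirling-type estimate implicit in Lemma \ref{kKMain}. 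I would structure the write-up as: (i) reduce to counting types; (ii) the type-injection lemma with its multiplicity bound; (iii) assemble the factorial and type ratios and check the exponents sum correctly.
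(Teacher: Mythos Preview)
Your factorisation $h_v(2,K_{\underline{c}}) = (c_1-1)!\prod_{i\ge 2}c_i!\cdot N_2(\underline{c})$ is correct and is exactly how the paper sets things up as well. But the part you label the ``main obstacle'' is a genuine gap, and the fix you sketch does not work. An injection from admissible words for $\underline{c}$ to admissible words for $\underline{b}$ by deleting or inserting letters will not preserve admissibility in general (deleting a letter can make its two neighbours equal), and even when it does, the multiplicity you propose, of order $\prod_i\binom{\max(b_i,c_i)}{|b_i-c_i|}$, is polynomial in $n$, whereas the target bound $\prod_i e^{|\log(b_i/c_i)|}=\prod_i\max(b_i/c_i,c_i/b_i)$ is of order $1+O(\sum_i|b_i-c_i|/n)$ in the regime of interest. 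So even with perfect bookkeeping the injection route overshoots by a factor that is exponential in $\sum_i|b_i-c_i|$.

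The paper avoids a direct comparison of $\underline{c}$ with $\underline{b}$ altogether. Instead it first proves a \emph{single--step} estimate (Lemma~\ref{stepcount}): if $c_i\le c_j-2$ and $\underline{c}'$ is obtained by moving one unit from coordinate $j$ to coordinate $i$, then
\[
h_v(2,K_{\underline{c}})\le \frac{(c_i+1)c_j}{c_i(c_j-1)}\,h_{v'}(2,K_{\underline{c}'}).
\]
This is obtained not by an injection of whole words but by conditioning on the positions of the letters \emph{other than} $i$ and $j$; once those positions are fixed the comparison reduces to a ratio of binomial coefficients for how the $i$'s and $j$'s can alternate in the remaining segments, and that ratio is computed exactly. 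Lemma~\ref{close} then follows by induction on $\sum_i|c_i-b_i|$: at each step one chooses $i,j$ so that the move heads monotonically towards $\underline{b}$, and the per--step factors telescope, since $\frac{c_i+1}{c_i}\cdot\frac{c_j}{c_j-1}=\exp\bigl(|\log(c'_i/c_i)|+|\log(c'_j/c_j)|\bigr)$ and the successive logarithms add up along each coordinate to $|\log(b_i/c_i)|$. That telescoping is precisely why the final bound has no stray polynomial factors; your all--at--once comparison has no mechanism to produce such cancellation.
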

We now bound the proportion of Hamilton cycles starting from a fixed vertex that immediately pass through a fixed vertex class. This will be important when we bound the cycles in a non-complete $k$-partite graph.

\begin{lem}\label{Turancount}
Let $k \ge 3$, and suppose $T_k(n)$ has vertex classes $V_1,\ldots,V_k$ (arbitrarily ordered independently of class size). Then for $n$ sufficiently large, if $v \in V_1$,
	\begin{align}
		h_v(2,T_k(n)) \ge \frac{2}{3k}h(T_k(n)). \nonumber
	\end{align}
\end{lem}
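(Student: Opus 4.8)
The plan is to show that the number of Hamilton cycle permutations from $v$ whose second vertex lies in $V_1$ is not much larger than those going through any other fixed class, and then to use~\eqref{first1} to pin down the proportion. Write $V_1,\ldots,V_k$ for the classes and recall that $v\in V_1$. By~\eqref{first1} we have $h(T_k(n)) = \tfrac12\sum_{j\ne i(v)}h_v(j,T_k(n))$, where $i(v)$ is the index of the class containing $v$; since $v\in V_1$ this is $\tfrac12\sum_{j\ne 1}h_v(j,T_k(n))$. So it suffices to prove that $h_v(2,T_k(n)) \ge \tfrac{4}{3k}\,\sum_{j\ne 1}h_v(j,T_k(n))$, i.e.\ that $h_v(2,\cdot)$ is a $\ge \tfrac{4}{3k}$ fraction of the total over the $k-1$ relevant classes. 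Equivalently, it is enough to show $h_v(j,T_k(n)) \le \bigl(1+o(1)\bigr)\,h_v(2,T_k(n))$ for every $j\ne 1$, since then $\sum_{j\ne 1}h_v(j,T_k(n)) \le (k-1)(1+o(1))h_v(2,T_k(n))$ and $\tfrac{1}{2(k-1)(1+o(1))} \ge \tfrac{2}{3k}$ for large $n$ once $k\ge 3$ (indeed $\tfrac{1}{2(k-1)} \ge \tfrac{2}{3k}$ iff $3k \ge 4(k-1)$ iff $k\le 4$; for $k\ge 5$ we need the sharper comparison below).

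The core step is therefore a comparison of $h_v(j,T_k(n))$ and $h_v(2,T_k(n))$ for two classes $V_j$ and $V_2$. The natural approach is an injection-with-small-multiplicity argument: given a Hamilton permutation $v_1\cdots v_n$ with $v_1=v$ and $v_2\in V_j$, locate the first occurrence of a vertex of $V_2$ along the cycle, say in position $p$, and perform a bounded ``rotation'' or segment-swap that moves a $V_2$-vertex into position~$2$ while preserving the property of being a Hamilton cycle in the complete $k$-partite graph (the only constraint being that consecutive vertices lie in different classes, which is easy to maintain since all classes are nonempty and of nearly equal size). Each target permutation arises from at most $O(n)$ sources, and the class sizes differ by at most $1$, so the distortion is $1+O(1/n) = 1+o(1)$. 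This yields $h_v(j,T_k(n)) \le (1+o(1)) h_v(2,T_k(n))$, and by symmetry the reverse, so all the $h_v(j,T_k(n))$ for $j\ne 1$ are equal up to a $1+o(1)$ factor.

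Alternatively — and this is probably cleaner — one can invoke Lemma~\ref{close}. Take $\underline{b}$ with $K_{\underline b}\cong T_k(n)$ and let $\underline c$ be the tuple obtained from $\underline b$ by swapping the sizes of classes $2$ and $j$ (and relabelling so the monotonicity hypothesis of Lemma~\ref{close} holds); then $K_{\underline c}\cong T_k(n)$ as well, and the product $\prod_i e^{|\log(b_i/c_i)|}$ is $1+O(1/n)$ because $b_i$ and $c_i$ differ by at most $1$ and are both $\Theta(n)$. This gives $h_v(j,T_k(n)) \le (1+o(1)) h_v(2,T_k(n))$ directly, with no rotation argument. Summing over $j\ne 1$ and combining with~\eqref{first1} gives $h_v(2,T_k(n)) \ge \tfrac{1}{(k-1)(1+o(1))}\cdot 2 h(T_k(n))/2 = \tfrac{2-o(1)}{2(k-1)}h(T_k(n))$; wait, more carefully, $2h(T_k(n)) = \sum_{j\ne 1}h_v(j,T_k(n)) \le (k-1)(1+o(1))h_v(2,T_k(n))$, so $h_v(2,T_k(n)) \ge \tfrac{2}{(k-1)(1+o(1))}h(T_k(n)) \ge \tfrac{2}{3k}h(T_k(n))$ for $n$ large, since $\tfrac{2}{k-1} > \tfrac{2}{3k}$ with room to absorb the $(1+o(1))$ (indeed $\tfrac{2}{k-1} \ge \tfrac{2}{3k}\cdot\tfrac{3k}{k-1}$ and $\tfrac{3k}{k-1}\ge 2$ for $k\ge 3$).

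The main obstacle is making Lemma~\ref{close} applicable: its hypothesis requires $b_i\ge b_j \iff c_i\ge c_j$ and $v,w$ in the \emph{first} classes $V_1,V_1'$, so some care is needed about which class plays the role of ``class~$1$'' in the lemma versus the fixed class containing $v$ here, and about reindexing after the size-swap so that the monotonicity condition is met while keeping $v$'s class fixed. If one instead goes the rotation route, the obstacle is the bookkeeping of the bounded-multiplicity injection — verifying that the swap always produces a valid proper $k$-partite cyclic sequence and that the number of preimages is genuinely $O(n)$ uniformly. Either way the quantitative slack ($\tfrac{2}{k-1}$ versus $\tfrac{2}{3k}$) is comfortable, so no delicate optimisation is required.
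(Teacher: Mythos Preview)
Your overall strategy --- show that the $h_v(j,T_k(n))$ for $j\ne 1$ are all within $1+o(1)$ of each other and then average via \eqref{first1} --- would indeed give the lemma with room to spare (you eventually compute the right constant $\tfrac{2}{k-1}\gg \tfrac{2}{3k}$). The gap is that neither of your proposed proofs of the comparison $h_v(j,T_k(n))\le (1+o(1))h_v(2,T_k(n))$ actually goes through.

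For the Lemma~\ref{close} route, the obstacle you flag is fatal rather than cosmetic. Writing $h_v(j,T_k(n))=h_v(2,K_{\underline c})$ with $\underline c$ obtained from the Tur\'an sizing $\underline b$ by swapping entries $2$ and $j$, the hypothesis $b_i\ge b_\ell\iff c_i\ge c_\ell$ fails precisely when $b_2\ne b_j$, which is the only nontrivial case. If instead you choose $\underline b'$ to share the ordering of $\underline c$, then since $\underline c$ is itself a Tur\'an sizing you are forced to take $\underline b'=\underline c$ and the lemma becomes a tautology. Moreover Lemma~\ref{close} only gives an upper bound; you need a two-sided comparison. No relabelling fixes this, because the roles of ``class~1'' (containing $v$) and ``class~2'' (the target for $v_2$) are pinned down by what you are trying to bound.

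For the rotation/swap route, any naive map (e.g.\ swap $v_2\in V_j$ with some $v_p\in V_2$ having suitable neighbours) has multiplicity $\Theta(n)$, not $1+o(1)$: from the image you cannot recover which of the $\Theta(n/k)$ copies of $j$ was the swapped position. To get a $(1+o(1))$-ratio you would need a concentration statement --- essentially that in a typical Hamilton code the number of odd $\{2,j\}$-segments is large --- and that is real work, not bookkeeping.

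The paper takes a different path that bypasses the pairwise comparison entirely. It models the class-label sequence as a random walk on $K_k$ stopped at the $b_1$-th visit to state $1$, lets $W$ be the number of $1\!\to\!2$ transitions, and observes that conditionally $\mathbb{P}[a_2=2]=\mathbb{E}[W/b_1]$ by shift-symmetry. Since $W\sim\mathrm{Bin}(b_1,\tfrac{1}{k-1})$, a Chernoff bound makes $\{W\le n/(2k^2)\}$ exponentially unlikely, while a de~Moivre--Laplace estimate shows the conditioning event $\{C\in P_{\underline b}\}$ has probability only polynomially small; hence $W\ge n/(2k^2)$ with high probability even after conditioning, giving $\mathbb{P}[a_2=2\mid\cdot]\ge 1/(3k)$ for large $n$. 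The missing ingredient in your plan is exactly a concentration argument of this flavour.
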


The next two lemmas give a recursive bound on the number of Hamilton cycles in $T_k(n)$. This will allow us to bound the number of cycles in the Tur\'{a}n graph in terms of the number of Hamilton cycles it contains. Throughout the chapter we will make use of the notation $(n)_i := n \cdot (n-1) \cdots (n - (i-1))$.
\begin{lem}\label{recursion}
For $k,n \in \bN, k \ge 3$ and $i \in [n]$,
	\begin{align}
		h(T_k(n)) \ge (n-1)_i\left(\frac{k-2}{k}\right)^ih(T_k(n-i)). \nonumber
	\end{align}
\end{lem}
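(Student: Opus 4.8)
The plan is to prove the inequality by induction on $i$, building up the Hamilton cycle of $T_k(n)$ one vertex at a time and tracking how many choices we have at each stage. More precisely, I would fix a vertex $v_1$ lying in a largest vertex class of $T_k(n)$ and count permutations $v_1\cdots v_n$ that form Hamilton cycles, as in the definition of $h_v(j,\cdot)$; by \eqref{first1} this counts $2h(T_k(n))$ when summed over the class of $v_2$. The key observation is that when we choose $v_2$, we are free to pick any vertex outside the class of $v_1$: there are $n - \lceil n/k\rceil \ge \frac{k-1}{k}n \ge \frac{k-2}{k}(n-1)$ such choices once $n$ is moderately large, and in fact we want the cleaner bound that there are at least $\frac{k-2}{k}\cdot(\text{something})$ choices that avoid \emph{two} specified classes, since after deleting $v_1$ we must land in a graph isomorphic to $T_k(n-1)$ only after we have also committed to where the cycle closes up.

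The cleanest route is the following. Set $m = n - i$. I would show that the number of Hamilton-cycle permutations $v_1\cdots v_n$ of $T_k(n)$, with $v_1$ in a largest class, is at least $(n-1)_i\left(\tfrac{k-2}{k}\right)^i$ times the number of Hamilton cycles of the graph induced on $\{v_1\}\cup\{v_{i+2},\ldots,v_n\}$ after a suitable relabelling — and that this induced graph, for a good choice of the first $i+1$ vertices, contains a spanning subgraph isomorphic to $T_k(m)$. Concretely, I choose $v_2,\ldots,v_{i+1}$ greedily: having placed $v_1,\ldots,v_j$, the vertex $v_{j+1}$ must differ in class from $v_j$, and we additionally insist it differs in class from $v_1$ (to keep control of the class that will eventually host $v_n$); the number of valid choices for $v_{j+1}$ is at least $|V(T_k(n))\setminus(V(v_j)\cup V(v_1))| - (j-1) \ge n - 2\lceil n/k\rceil - (j-1) \ge \frac{k-2}{k}n - (j-1) \ge \frac{k-2}{k}(n-j)$, where the last step uses $\frac{k-2}{k}(n-j) \le \frac{k-2}{k}n - j$ since $\frac{2}{k}j \le j$. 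Multiplying these $i$ lower bounds telescopes to $\prod_{j=1}^{i}\frac{k-2}{k}(n-j) = \left(\frac{k-2}{k}\right)^i (n-1)_i$.

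The remaining point is that, after removing the $i$ vertices $v_2,\ldots,v_{i+1}$, the ways to complete $v_1 v_{i+2}\cdots v_n$ back to $v_1$ number at least $h(T_k(n-i))$: the vertices $v_2,\ldots,v_{i+1}$ were chosen from classes other than that of $v_1$, so removing them from $T_k(n)$ leaves a complete $k$-partite graph on $n-i$ vertices in which $v_1$'s class is still (weakly) largest, and by Lemma~\ref{Turanbest} this graph has at least $h(T_k(n-i))$ Hamilton cycles through any fixed edge-orientation at $v_1$ — more carefully, one pairs up each Hamilton cycle of the reduced complete $k$-partite graph with a permutation $v_1 v_{i+2}\cdots v_n$ and notes $T_k(n-i)$ minimises nothing but maximises, so the reduced graph has at least as many Hamilton cycles as $T_k(n-i)$ does. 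Dividing the total count of permutations by $2$ (for orientation) on both sides then yields $h(T_k(n)) \ge (n-1)_i\left(\frac{k-2}{k}\right)^i h(T_k(n-i))$. The main obstacle I anticipate is the bookkeeping in the last step: ensuring that the greedy choices really do leave a complete $k$-partite graph whose largest class still contains $v_1$ (so that Lemma~\ref{Turanbest} applies with the right orientation), and handling small values of $n$ or $i$ where the inequality $\frac{k-2}{k}n - (j-1) \ge \frac{k-2}{k}(n-j)$ or the "largest class" condition could degrade; these edge cases may need the hypothesis $k\ge 3$ and a short separate check, or an adjustment to which class $v_1$ is drawn from.
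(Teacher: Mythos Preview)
There is a genuine gap in the completion step, and it cannot be patched within the framework you have set up. After you delete $v_2,\ldots,v_{i+1}$, the remaining complete $k$-partite graph on $n-i$ vertices is \emph{unbalanced}: you insisted that none of the deleted vertices lie in $v_1$'s class, so that class still has its full $\lceil n/k\rceil$ vertices while the other classes have shrunk. Lemma~\ref{Turanbest} says the Tur\'an graph \emph{maximises} the number of Hamilton cycles among $k$-partite graphs, so it gives $h(K_{\underline a})\le h(T_k(n-i))$ --- the wrong direction for your argument. You need a lower bound on the Hamiltonian count of this unbalanced graph, and none is available; indeed for $i$ comparable to $n/k$ the remaining graph can have dramatically fewer Hamilton cycles than $T_k(n-i)$. (There is also a mismatch between what you need and what you are counting: completing $v_1\cdots v_{i+1}$ to a Hamilton cycle requires a Hamilton \emph{path} in the residual graph from a neighbour of $v_{i+1}$ to a neighbour of $v_1$, not a Hamilton cycle of the graph on $\{v_1\}\cup\{v_{i+2},\ldots,v_n\}$.)

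Your greedy arithmetic also breaks down. The step $n-2\lceil n/k\rceil \ge \tfrac{k-2}{k}n$ fails whenever $k\nmid n$ (try $k=3$, $n=7$), and the final inequality $\tfrac{k-2}{k}n-(j-1)\ge \tfrac{k-2}{k}(n-j)$ is equivalent to $j\le k/2$, so it fails for all $j>k/2$; the justification you wrote has the sign of $\tfrac{k-2}{k}(n-j)\lessgtr \tfrac{k-2}{k}n-j$ reversed.

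The paper avoids both problems by running the construction in the opposite direction: take a vertex $v$ in a largest class of $T_k(n)$, so that $T_k(n)\setminus v = T_k(n-1)$ exactly, and for each Hamilton cycle of $T_k(n-1)$ \emph{insert} $v$ between two consecutive vertices neither of which lies in $v$'s class. A Hamilton cycle of $T_k(n-1)$ has $n-1$ edges and at most $2\bigl(\lceil n/k\rceil -1\bigr)\le \tfrac{2(n-1)}{k}$ of them touch $v$'s class, leaving at least $\tfrac{k-2}{k}(n-1)$ insertion slots; distinct (cycle, slot) pairs give distinct Hamilton cycles of $T_k(n)$, so $h(T_k(n))\ge \tfrac{k-2}{k}(n-1)\,h(T_k(n-1))$, and iterating gives the lemma. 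The insertion viewpoint keeps the smaller graph exactly Tur\'an at every step, which is precisely what your deletion approach cannot guarantee.
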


\begin{lem}\label{secondcount}
For $k,n \in \bN, k \ge 3$:
	\begin{align}
		c(T_k(n)) \le e^{\frac{2k}{k-2}}h(T_k(n)). \nonumber
	\end{align}
\end{lem}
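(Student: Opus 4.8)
The plan is to bound $c(T_k(n))$ by summing over cycle lengths and comparing each $c_r(T_k(n))$ to $h(T_k(n))$ using the recursive bound from Lemma~\ref{recursion}. First I would observe that a cycle of length $r$ in $T_k(n)$ is obtained by choosing a set $S$ of $n-r$ vertices to omit and then taking a Hamilton cycle of the induced subgraph $T_k(n) - S$. Since $T_k(n) - S$ is a $k$-partite graph on $r$ vertices, Lemma~\ref{Turanbest} gives $h(T_k(n)-S) \le h(T_k(r))$, and there are at most $\binom{n}{n-r} \le (n)_{n-r}$ ways (in fact fewer, but this crude count suffices) to choose the omitted vertices. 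Hence
\begin{align}
c_r(T_k(n)) \le \binom{n}{r} h(T_k(r)). \nonumber
\end{align}
(One must be slightly careful: distinct vertex sets give distinct cycles, so this is a genuine upper bound after accounting for the choice of $S$; I would phrase it as choosing the $r$ vertices \emph{used} by the cycle.)

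Next I would feed this into Lemma~\ref{recursion} with $i = n - r$, which yields
\begin{align}
h(T_k(n)) \ge (n-1)_{n-r}\left(\frac{k-2}{k}\right)^{n-r} h(T_k(r)), \nonumber
\end{align}
so that $h(T_k(r)) \le h(T_k(n)) \big/ \big[(n-1)_{n-r}(\tfrac{k-2}{k})^{n-r}\big]$. Combining with the previous display and using $\binom{n}{r} = \frac{(n)_{n-r}}{(n-r)!} \le \frac{n \cdot (n-1)_{n-r-1}\cdot \text{(small)}}{(n-r)!}$, the falling factorials largely cancel; more precisely $\binom{n}{r}/(n-1)_{n-r} \le \frac{n}{r!}\cdot\frac{(n-1)_{n-r-1}}{(n-1)_{n-r}}\cdot\dots$ — rather than chase constants I would just note $\binom{n}{r} \le (n-1)_{n-r}\cdot \frac{n}{(n-r)!} \cdot (1+o(1))$ type estimates, giving
\begin{align}
c_r(T_k(n)) \le \frac{Cn}{(n-r)!}\left(\frac{k}{k-2}\right)^{n-r} h(T_k(n)) \nonumber
\end{align}
for an absolute constant $C$. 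The point is that each summand carries a factor $\frac{1}{(n-r)!}\left(\frac{k}{k-2}\right)^{n-r}$ relative to $h(T_k(n))$, so summing over $r = 3, \ldots, n$ and reindexing by $m = n-r$ gives a convergent sum bounded by $\sum_{m \ge 0} \frac{1}{m!}\left(\frac{k}{k-2}\right)^m = e^{k/(k-2)}$, up to the constant. To get the clean bound $e^{2k/(k-2)}$ stated in the lemma one presumably absorbs the stray polynomial factors into a second exponential factor (using $e^{k/(k-2)} \ge$ some polynomial, valid since $k$ is fixed and $n$ large — or rather, being more careful about the ratio of falling factorials so that no stray $n$ appears at all).

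The main obstacle will be the bookkeeping in the cancellation of falling factorials: one needs $\binom{n}{r}\big/(n-1)_{n-r}$ to be at most roughly $1/(n-r)!$ times a bounded quantity, and getting the constant down to exactly $2$ in the exponent (rather than just \emph{some} constant) requires being honest about the lower-order terms — for instance writing $\binom nr = \frac{n}{n-r}\cdot\frac{(n-1)!}{(r-1)!(n-r)!}$ and comparing carefully with $(n-1)_{n-r} = \frac{(n-1)!}{(r-1)!}$, which actually gives $\binom nr = \frac{(n-1)_{n-r}}{(n-r)!}\cdot\frac{n}{n-r}$. The annoying factor $\frac{n}{n-r}$ blows up when $r$ is close to $n$, i.e. $m = n-r$ small, but there it is compensated because $h(T_k(r)) \le h(T_k(n))$ trivially and one can handle the finitely many largest values of $r$ (say $r > n - \log n$) separately and crudely, where $\binom nr$ is only polynomial; for $r \le n - \log n$ the factor $\frac{n}{n-r} \le \frac{n}{\log n}$ is dwarfed by the $\left(\frac{k}{k-2}\right)^{-m}$ (no — that grows; rather it is dwarfed by $1/m!$). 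In any case the sum converges comfortably and the constant, after honest accounting, comes out below $e^{2k/(k-2)}$; I would split the range of $r$ as above to keep the estimates transparent and avoid the singular factor.
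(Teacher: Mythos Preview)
Your approach is exactly the paper's: bound $c_r(T_k(n)) \le \binom{n}{r} h(T_k(r))$ via Lemma~\ref{Turanbest}, then invoke Lemma~\ref{recursion} to compare $h(T_k(r))$ with $h(T_k(n))$, and sum. The difficulty you run into is real, but it dissolves with one clean inequality you have not spotted.

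First, a slip: from $(n-1)_{n-r} = \dfrac{(n-1)!}{(r-1)!}$ one gets $\dfrac{\binom{n}{r}}{(n-1)_{n-r}} = \dfrac{n}{r\,(n-r)!}$, so the stray factor is $\dfrac{n}{r}$, not $\dfrac{n}{n-r}$. In the variable $m = n-r$ this is $\dfrac{n}{n-m}$, which is near $1$ for small $m$ and only grows as $m \to n-3$ --- the opposite of what you wrote.

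Second, and this is the point that makes all the range-splitting unnecessary: for every $0 \le m \le n-3$ one has the elementary bound $\dfrac{n}{n-m} \le 2^{m}$ (equivalently $(n)_{m} \le 2^{m}(n-1)_{m}$). Feeding this directly into your computation gives
\[
c(T_k(n)) \;\le\; \sum_{m=0}^{n-3} \binom{n}{m}\frac{h(T_k(n))}{(n-1)_m}\left(\frac{k}{k-2}\right)^{m}
\;\le\; h(T_k(n))\sum_{m \ge 0} \frac{1}{m!}\left(\frac{2k}{k-2}\right)^{m}
\;=\; e^{\frac{2k}{k-2}}\,h(T_k(n)),
\]
using $\binom{n}{m}/(n)_m = 1/m!$. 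This is precisely how the paper argues: no splitting, no stray polynomial, and the constant $e^{2k/(k-2)}$ falls out exactly, valid for all $n$ rather than only $n$ large.
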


Finally, we have similar results when $k = 2$. This case is slightly different to when $k\ge 3$ as $T_2(n)$ only contains even cycles.

\begin{lem}\label{second2count}
For $n \in \mathbb{N}$ and $i = o(n)$, we have

	$$c(T_2(n-i)) \le 2e \left(\frac{4}{n}\right)^i c_{2\left\lfloor \frac{n}{2} \right\rfloor}(T_2(n)).$$

\end{lem}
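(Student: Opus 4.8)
The plan is to prove the bound in two independent steps, playing roughly the roles that Lemmas~\ref{recursion} and~\ref{secondcount} play in the case $k\ge 3$: first, that $c(T_2(m))$ is at most a small absolute constant times the number $c_{2\lfloor m/2\rfloor}(T_2(m))$ of its longest cycles; and second, that $c_{2\lfloor m/2\rfloor}(T_2(m))$ shrinks by a factor of order $2/m$ each time $m$ is reduced by one.

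For the first step I would set up the elementary cycle count in $T_2(m)$. Writing $a=\lceil m/2\rceil$, $b=\lfloor m/2\rfloor$ for the class sizes, every cycle has even length $2j$ with $2\le j\le b$, and the number of cycles of length $2j$ is $f(j):=\binom aj\binom bj\tfrac{j!(j-1)!}{2}$ (choose the $j$ vertices used from each class, then pick a Hamilton cycle of the resulting $K_{j,j}$). Thus $c(T_2(m))=\sum_{j=2}^b f(j)$ while $c_{2\lfloor m/2\rfloor}(T_2(m))=f(b)$. A direct computation gives, for $1\le r\le b-2$,
\[
\frac{f(b-r)}{f(b-r+1)}=\frac{b-r+1}{(b-r)\,(a-b+r)\,r}\le \frac{2}{r^2},
\]
using $a\ge b$ and $b-r\ge 2$. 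Telescoping yields $f(b-r)\le \tfrac{2^r}{(r!)^2}f(b)$, hence
\[
c(T_2(m))\le f(b)\sum_{r\ge 0}\frac{2^r}{(r!)^2}<5\,f(b)<2e\,f(b),
\]
so $c(T_2(m))\le 2e\cdot c_{2\lfloor m/2\rfloor}(T_2(m))$ for all large $m$, with room to spare (being more careful with the factor $\tfrac{b-r+1}{b-r}$ for small $r$ shows the constant actually tends to $\sum_{r\ge0}(r!)^{-2}\approx 2.28$).

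For the second step, write $g(m):=c_{2\lfloor m/2\rfloor}(T_2(m))$. Evaluating $f(b)$ gives the closed forms $g(2\ell)=\tfrac{\ell!(\ell-1)!}{2}$ and $g(2\ell+1)=\tfrac{(\ell+1)!(\ell-1)!}{2}$, from which $\tfrac{g(m-1)}{g(m)}$ equals $\tfrac1{\ell-1}$ when $m=2\ell$ and $\tfrac1{\ell+1}$ when $m=2\ell+1$; in both cases this is at most $\tfrac4n$ provided $m\ge n-i$ and $n$ is large (here $i=o(n)$ is used, so that $n-i\ge n/2+2$, say). Multiplying these $i$ ratios over $m=n-i+1,\dots,n$ gives $g(n-i)\le (4/n)^i\,g(n)$. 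Combining the two steps, $c(T_2(n-i))\le 2e\,g(n-i)\le 2e(4/n)^i g(n)=2e(4/n)^i\, c_{2\lfloor n/2\rfloor}(T_2(n))$, as required; if $\lfloor(n-i)/2\rfloor<2$ the left-hand side is $0$ and there is nothing to prove.

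The only genuinely delicate point is keeping the constant in the first step below $2e$: this forces one to estimate the geometric-type tail $\sum_r 2^r/(r!)^2$ honestly rather than crudely bounding each $f(b-r)$ by $f(b)$, and to keep track of the parity of $m$ throughout both steps. Everything else is routine bookkeeping.
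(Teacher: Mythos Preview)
Your proposal is correct and follows essentially the same two-step strategy as the paper: first bound $c(T_2(m))$ by a constant times its longest-cycle count, then show that longest-cycle count drops by a factor of order $2/m$ when $m$ decreases by one. The only cosmetic difference is in step one, where the paper writes the sum as $\sum_{r'} \tfrac{t}{(t-r')r'!r'!}$ and bounds $\tfrac{t}{(t-r')r'!}\le 2$ to reach exactly $2e$, while you telescope the ratio $f(b-r)/f(b-r+1)\le 2/r^2$ to get the slightly sharper $\sum_r 2^r/(r!)^2\approx 4.25<2e$; the second step and the combination are effectively identical to the paper's.
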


\section{Counting Cycles in $H$-free Graphs}\label{sec2}
Fix $H$ to be a graph with $\chi(H) =k+1$ containing a critical edge. 
The first aim of this section is to prove a lemma bounding the number of cycles in an $n$-vertex $H$-free graph containing a fixed number of edges. We will need the following theorem of Simonovits~\cite{Sim1}.

\begin{thm}[Simonovits {\cite[Theorem 2.3]{Sim1}}]
\label{simthm}
Let $H$ be a graph with $\chi(H) =k+1 \ge 3$ that contains a critical edge. Then there exists some $n_0$ such that, for all $n \ge n_0$, we have $\Ex(n;H) = \{T_k(n)\}$.
\end{thm}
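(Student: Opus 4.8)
The plan is to combine the Erd\H os--Stone theorem, a stability statement, and a local embedding argument exploiting the colour-critical edge. Fix a colour-critical edge $e_0 = x_0y_0$ of $H$ and put $h := |V(H)|$. Since $\chi(H - e_0) = k < k+1 = \chi(H)$, in every proper $k$-colouring of $H - e_0$ the endpoints $x_0, y_0$ lie in a common colour class; fix one such colouring with classes $C_1 \ni x_0, y_0$ and $C_2, \dots, C_k$. This gives the embedding principle we will use repeatedly: \emph{$H$ is a subgraph of any graph containing pairwise completely joined vertex sets $Z_1, \dots, Z_k$ with $|Z_i| \ge |C_i|$ and with at least one edge inside $Z_1$} (map $C_i$ into $Z_i$, route $e_0$ along the edge in $Z_1$, and route every other edge of $H$ along a complete join). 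Note also that $T_k(n)$ is $H$-free, since $\chi(T_k(n)) = k < \chi(H)$, so $\ex(n;H) \ge t_k(n)$.

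First I would pin down the coarse structure of an extremal graph. Let $G$ be $H$-free on $n$ vertices with $e(G) = \ex(n;H)$. By the Erd\H os--Stone theorem $e(G) = t_k(n) + o(n^2)$, and by the Erd\H os--Simonovits stability theorem (or, more than is needed here, Theorem~\ref{stable} from \cite{stabAA}) $G$ becomes $k$-partite after deleting $o(n^2)$ edges. Fix a partition $V(G) = V_1 \cup \dots \cup V_k$ minimising the number $m$ of edges inside parts; then $m = o(n^2)$, and edge-minimality gives $|N(v) \cap V_{i(v)}| \le |N(v)\cap V_j|$ for every $v$ and every $j$. Writing $p$ for the number of non-edges between parts, the inequalities $t_k(n) \le e(G) = \sum_{i<j}|V_i||V_j| - p + m \le t_k(n) - p + m$ force the $V_i$ to be balanced up to an $o(n)$ error and $p \le m = o(n^2)$.

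Next I would establish that $\delta(G) \ge \delta(T_k(n))$, arguing by induction on $n$ in the form of Simonovits's progressive induction (which dispenses with a genuine base case): a minimum-degree vertex $v$ satisfies $e(G - v) \le \ex(n-1;H) = t_k(n-1)$, so $\delta(G) = e(G) - e(G-v) \ge t_k(n) - t_k(n-1) = n - \lfloor n/k\rfloor \ge \delta(T_k(n))$. Combined with the edge-minimality inequalities, this yields that \emph{every} vertex $u$ has $|N(u) \cap V_j| \ge h$ for all $j \ne i(u)$: if some $|N(u)\cap V_{j_0}| < h$ then also $|N(u)\cap V_{i(u)}| < h$, whence $\deg(u) \le 2h + (k-2)\max_j|V_j| < \delta(T_k(n))$ for large $n$, a contradiction. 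Since $2m = \sum_v |N(v)\cap V_{i(v)}| = o(n^2)$ and $p = o(n^2)$, all but $o(n)$ vertices are \emph{typical}, meaning they have $o(n)$ neighbours in their own part and $o(n)$ non-neighbours outside it. If $uv$ were an edge inside some $V_1$ with $u, v$ both typical, then $u$ and $v$ would have $n/k - o(n) \ge h$ common neighbours in each $V_j$ $(j \ne 1)$; restricting to typical vertices (of which each part still contains $\Omega(n)$), a routine greedy/supersaturation argument produces pairwise completely joined sets $Z_2, \dots, Z_k$ with $Z_j \subseteq N(u)\cap N(v)\cap V_j$ together with a set $Z_1 \ni u, v$ inside $V_1$ complete to $Z_2 \cup \dots \cup Z_k$, so the embedding principle gives $H \subseteq G$ --- a contradiction. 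Hence every within-part edge of $G$ has an atypical endpoint.

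It remains to upgrade this to $m = 0$ together with $G$ being complete between its parts, and this last step is the technical core (it is the heart of Simonovits's original argument); it is where I expect the main difficulty. The plan is an inductive clean-up: delete the $o(n)$ atypical vertices to obtain a $k$-partite $H$-free graph $G'$ on $n - o(n)$ vertices with $e(G') \ge t_k(n) - o(n^2)$, apply the inductive form of the theorem (for smaller $n$) to understand $G'$, and then re-insert the deleted vertices one at a time, using the embedding principle each time to show that a re-inserted vertex can be neither incident to a within-part edge nor missing a cross-edge (the obstruction being a copy of $H$ built from that vertex together with a nearly complete $k$-partite neighbourhood). Carrying this out requires careful control of how the sublinearly many atypical vertices interact --- they may simultaneously carry many within-part edges and miss many cross-edges, exactly the configuration colour-criticality is designed to forbid --- and this bookkeeping is the main obstacle. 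Once $G$ is known to be $k$-partite with $e(G) \ge t_k(n)$, Tur\'an's theorem forces $e(G) = t_k(n)$ and $G \cong T_k(n)$, giving $\ex(n;H) = t_k(n)$ and $\Ex(n;H) = \{T_k(n)\}$ for all large $n$. (As an aside, one could instead simply invoke Theorem~\ref{stable} with $f \equiv 0$ to conclude directly that an extremal $G$ is $k$-partite and hence equal to $T_k(n)$; but since the proof of Theorem~\ref{stable} itself rests on stability for colour-critical graphs, the self-contained route above is the more natural one to present.)
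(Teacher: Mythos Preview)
The paper does not prove this statement: Theorem~\ref{simthm} is quoted from Simonovits~\cite{Sim1} and used as a black box, so there is no proof in the paper to compare against.

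That said, your outline follows the classical stability-plus-embedding route of Simonovits's original argument, and the overall shape is right. Two comments are worth making. First, your minimum-degree step as written is circular: you invoke $\ex(n-1;H)=t_k(n-1)$, which is exactly the conclusion you are trying to establish. Progressive induction does not work by simply assuming the result at $n-1$; rather, one tracks a discrepancy function (say $\ex(n;H)-t_k(n)$), shows it is nonincreasing along a deletion process, and uses Erd\H os--Stone to pin it at zero for large $n$. You would need to rewrite that paragraph accordingly. Second, you yourself flag the re-insertion of atypical vertices as the ``technical core'' and leave it as a plan; this is indeed where the real work lies, and what you have written there is not yet a proof.

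Your aside is the most efficient observation in the whole proposal: within the framework of this paper, taking $f\equiv 0$ in Theorem~\ref{stable} shows that any $H$-free $G$ with $e(G)\ge t_k(n)$ is already $k$-partite, whence $G\cong T_k(n)$ by Tur\'an. That one line is a complete derivation of Theorem~\ref{simthm} from Theorem~\ref{stable}, and since the paper already imports Theorem~\ref{stable} as a black box, this is the natural route here. Your worry about circularity is a fair historical remark (the result in~\cite{stabAA} is downstream of Simonovits's ideas), but it is not a logical objection in the context of the present paper.
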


Given $H$, define $n'_0(H)$ to be the smallest value of $n_0$ such that Theorem~\ref{simthm} holds and choose $n_0(H) \ge n'_0(H)$ such that $\ex(n;H) \ge 10n$ for each $n \ge n_0$. We define $\beta(H):= 10n_0.$

In a recent paper, Arman and Tsaturian~\cite{armtsat} consider the maximum number of cycles in a graph with a fixed number of edges: They show that if $G$ is an $n$-vertex graph with $m$ edges, then 
$$c(G)  \le \left\{ \begin{array}{c l}      
    \frac{3}{4}\Delta(G)\left(\frac{m}{n-1}\right)^{n-1} & \text{ for } \frac{m}{n-1} \ge 3,\\
     \frac{3}{4}\Delta(G)\cdot \left(\sqrt[\leftroot{-2}\uproot{2}3]{3}\right)^m, & \text{otherwise}.\\
\end{array}\right.$$

This general bound is not strong enough for us: comparing this bound with the bounds given in Lemma \ref{kKMain}, we see that a graph with at least as many cycles as $T_k(n)$ has at least $\left(1+o(1)\right)e^{-1}t_k(n)$ edges. However under the additional assumption that our graph does not contain a forbidden subgraph $H$, we are able to prove the following lemma which we will later use to show that an $H$-free graph with at least as many cycles as $T_k(n)$ has at least $\left(1+o(1)\right)t_k(n)$ edges. We remark that when $m$ is close to $t_k(n),$ the bound we gives beats the general bound of Arman and Tsaturian by an exponential factor.

\begin{lem}\label{easycor}
Let $H$ be a fixed graph with $\chi(H) =k+1 \ge 3$ containing a critical edge. For $n$ sufficiently large, let $G$ be an $H$-free graph with $n$ vertices and $m$ edges where $t_k(n) - 10n \ge m \ge \beta(H) \cdot n$ (recall the definition of $\beta(H)$ from just after Theorem \ref{simthm}). Then $c(G) = O\left(\lambda^{n}n^{n+2}\left(\frac{k-1}{k}\right)^ne^{\frac{2k-1}{(k-1)\lambda} - \lambda n}\right)$, where 

	\begin{align}
		\lambda := 1 - \left(1 - \frac{2k}{k-1}\frac{m}{\left(n-3\right)^2}\right)^{\frac{1}{2}}.\label{alphadef}
	\end{align}
\end{lem}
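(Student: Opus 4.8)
The plan is to bound $c(G)$ by counting cycles according to their length, using the constraint that an $H$-free graph has few edges not just globally but locally, and that the absence of many edges forces cycles of each length to be scarce. The starting point is that $c(G) = \sum_{r=3}^n c_r(G)$, so I would first seek a good bound on the number of cycles of a given length $r$ in an $n$-vertex graph with $m$ edges; a natural route is to bound $c_r(G)$ via walks, but more efficiently one counts a cycle of length $r$ by choosing an ordered sequence of vertices, where consecutive vertices must be adjacent. The key observation is that since $G$ has only $m$ edges, if we build a cycle vertex by vertex, the ``average'' number of choices at each step is controlled: roughly, $r$ consecutive adjacencies cost a factor related to $\prod(\text{degrees})$, and by convexity / AM-GM the product of degrees over any $r$-subset is at most something like $(2m/n)$-ish per vertex on average. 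The honest way to get the stated bound is that a cycle of length $r$ corresponds to a closed walk, and the number of such is at most (number of ways to pick an ordered edge) times (number of ways to extend), leading to a bound of the shape $c_r(G) \le \frac{1}{2r}\cdot(\text{something})^r$ where the base grows with $m/n$; summing a geometric-type series over $r$ then concentrates the mass near $r = n$, giving the $n^{n+2}$ and $e^{-\lambda n}$ behaviour.

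More concretely, I would argue as follows. First, since $G$ is $H$-free with $n \ge n_0(H)$ and $m \ge \beta(H)\cdot n = 10 n_0 \cdot n$, in particular $m > \ex(n; H)$ is impossible, so $m \le t_k(n) - 10n$ is consistent; the role of the $H$-freeness is to let us apply a Kővári–Sós–Turán / supersaturation-type statement saying that $G$ (or suitable subgraphs of $G$ on $\ge n_0$ vertices) have at most $\ex(\cdot; H)$ edges, which for induced subgraphs on $s$ vertices gives roughly $\binom{s}{2}\cdot\frac{k-1}{k}$ edges. This lets me bound, for any vertex set, the number of edges inside it, hence bound the number of ways to close up a walk into a cycle on a prescribed vertex set by essentially the number of Hamilton cycles in a $k$-partite-like graph on those vertices — which is where the $\left(\frac{k-1}{k}\right)$ factor enters. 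The parameter $\lambda$ defined in \eqref{alphadef} is exactly the positive root of a quadratic $\frac{k-1}{2k}(n-3)^2 \lambda^2 - (n-3)^2\lambda + m = 0$ up to rearrangement (equivalently $m = \frac{k-1}{2k}(n-3)^2(2\lambda - \lambda^2) = \frac{k-1}{2k}(n-3)^2(1-(1-\lambda)^2)$), so it is designed to make an optimization over cycle length $r$ — maximize $\binom{n}{r}$ times a term like $\left(\frac{(k-1)m'}{?}\right)^{r}$ subject to an edge budget — come out cleanly; I would set up that optimization (Lagrange multiplier / single-variable calculus in $r$), find the optimal $r \approx (1-\lambda)n$ or so, and read off the exponential rate. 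The $e^{\frac{2k-1}{(k-1)\lambda}}$ factor looks like the lower-order correction from Stirling applied to $\binom{n}{r}$ at the optimal $r$, or from bounding a tail sum $\sum_r$ by its maximum term times a geometric ratio $\frac{1}{1-\rho}$ with $1-\rho \approx \frac{(2k-1)\lambda}{?}$.

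The technical heart is therefore a two-variable estimate: bound $c_r(G)$ for each $r$, then sum. For the per-$r$ bound, the cleanest approach is: a cycle of length $r$ is determined by an ordered tuple $(v_1,\dots,v_r)$ with $v_i v_{i+1} \in E(G)$ cyclically, counted $2r$ times; the number of such tuples is $\sum$ over the first vertex and then a product of adjacency indicators, which one bounds by iteratively using that $\sum_{j} \mathbf{1}[v_{i}v_{j}\in E]\cdot(\text{weight}_j) \le$ (max degree or an averaged quantity) $\times$ (something). Doing this carefully — perhaps by an entropy or convexity argument showing $\sum_{\text{tuples}} 1 \le \frac{n!}{(n-r)!}\cdot\left(\frac{2m}{\text{relevant normalization}}\right)^{?}$, or by first passing to the $k$-partite structure guaranteed by the $H$-free edge bound applied to every $r$-subset — is where the real work lies, and it is the step I expect to be the main obstacle. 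A reasonable shortcut is to avoid per-subset $H$-free bounds and instead note $c_r(G) \le \binom{n}{r}\cdot h(K_{a_1,\dots,a_k})$-type maximum over the densest $k$-partite graph on $r$ vertices, using Lemma~\ref{Turanbest} for monotonicity; but that requires first knowing every $r$-subset induces a (near-)$k$-partite graph, which is not given by $H$-freeness alone for small $r$. So I expect the actual argument either invokes a crude but sufficient bound like $c_r(G) \le \frac{1}{2r}\cdot r!\cdot(\text{density})^{\binom{r}{2}}$ ... ugh, not that either — more likely: count cycles of length $r$ by first choosing the $r$ vertices ($\binom{n}{r}$ ways), observing the induced subgraph has at most $\frac{k-1}{k}\binom{r}{2} + o(r^2)$ edges when $r \ge n_0$ (by $H$-freeness) and at most $\binom{r}{2}$ always, then bounding the number of Hamilton cycles in an $r$-vertex graph with $e$ edges by $\frac{1}{2}\cdot\frac{(r-1)!\, e^r}{\binom{r}{2}^r}$-ish via the permanent/Bregman-type bound, and finally summing. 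The messy constants $\frac{2k-1}{(k-1)\lambda}$ and $\frac{2k}{k-1}$ in the statement strongly suggest that this is the skeleton and that the paper simply grinds the resulting single-variable optimization — so in my plan I would (i) fix the per-$r$ bound via a Hamilton-cycle-count-in-sparse-graph lemma, (ii) substitute the $H$-free density $\frac{k-1}{k}$, (iii) optimize the exponent $\log\binom{n}{r} + r\log(\text{base})$ over $r$ and verify the maximizer matches the definition of $\lambda$, and (iv) bound the tail sum by the maximal term times a constant, collecting the lower-order $e^{\frac{2k-1}{(k-1)\lambda}}$ and polynomial $n^{2}$ factors from Stirling. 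Step (i) combined with the local $H$-free edge bound is the crux.
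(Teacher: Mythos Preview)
Your sketch has a genuine gap. The per-length approach you settle on in steps (i)--(iv) bounds $c_r(G)$ by $\binom{n}{r}$ times a Hamilton-cycle count in an $r$-vertex graph with edge density at most $\tfrac{k-1}{k}$, coming from the $H$-free constraint applied to each $r$-set. But this bound is the \emph{same for every $H$-free $G$} --- it never uses the global edge count $m$. Summing over $r$ and optimizing would then give a bound independent of $m$, which cannot possibly yield the stated estimate, since $\lambda$ depends on $m$. (Your earlier remark about ``the average number of choices at each step is controlled by $m$'' points in a better direction, but you abandon it.) The Bregman/permanent-type bound you gesture at would also need to be sharp enough to recover the precise exponent, and you correctly flag that as the main obstacle without resolving it.

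The paper's argument is structurally different. It does not count cycles by length. Instead it bounds, for each pair $x,y$, the number of paths $p_{x,y}(G)$ (Lemma~\ref{count}), and then observes $c(G)\le \sum_{xy\in E(G)} p_{x,y}(G)\le m\cdot\max_{x,y} p_{x,y}(G)$, which is where the extra $n^2$ comes from. The path count is bounded by a greedy vertex-removal: starting at $x$, repeatedly peel off the current vertex and pass to its best neighbour, giving $p_{x,y}(G)\le \prod_i d_{G_i}(x_i)$. Crucially, this product is constrained in \emph{two} ways simultaneously: $\sum_i d_i\le m$ (the global budget) and, since each $G_t$ is $H$-free on $n-t+1$ vertices, $\sum_{i\ge t} d_i\le \ex(n-t+1;H)=t_k(n-t+1)$ for $t$ large (the local budget). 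Maximizing the product under both constraints forces the sequence $(d_i)$ to first follow the Tur\'an increments $t_k(i)-t_k(i-1)$ up to some index $I$ and then stay flat; solving for $I$ in terms of $m$ is exactly what produces the quadratic defining $\lambda$. The interplay of the two constraints is the key idea you are missing.
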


The next lemma bounds the maximum number of paths that an $H$-free graph $G$ can contain between two fixed vertices. For $x,y \in V(G)$, define $p_{x,y}$ to be the number of paths between $x$ and $y$ in $G$.

\begin{lem}\label{count}
Let $H$ be a graph with $\chi(H) = k+1 \ge 3$ that contains a critical edge. For $n$ sufficiently large, let $G$ be an $H$-free graph with $n$ vertices and $m$ edges where $t_k(n) - 10n \ge m \ge \beta(H) \cdot n$ (recall the definition of $\beta(H)$ from just after Theorem \ref{simthm}). Then for any $x,y \in V(G)$,
$$p_{x,y}(G) = O\left(\lambda^{n}n^{n}\left(\frac{k-1}{k}\right)^ne^{\frac{2k-1}{(k-1)\lambda} - \lambda n}\right),$$
where $\lambda$ is as defined in \eqref{alphadef}. 
\end{lem}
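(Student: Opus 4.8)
The plan is to reduce the path-counting problem to the cycle-counting bound of Lemma~\ref{easycor}. Given $x,y \in V(G)$ with $x \ne y$, a path from $x$ to $y$ together with the edge $xy$ would form a cycle, but $xy$ need not be an edge of $G$, so instead I would pass to an auxiliary graph. Let me think about what's really going on: I want to apply a bound that controls $c(G')$ for a graph $G'$ that is still $H$-free, still has about $t_k(n)$ edges, but in which every $x$--$y$ path of $G$ has been turned into a cycle. The natural move is to add a new vertex $z$ adjacent only to $x$ and $y$ (or, if one insists on keeping the vertex count at $n$, to subdivide an edge or merge $z$ into an existing low-degree vertex). Each $x$--$y$ path $P$ in $G$ then yields the cycle $P + xz + zy$ in $G'$, and distinct paths yield distinct cycles (they differ already on $V(G)$), so $p_{x,y}(G) \le c(G')$. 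One must check that $G'$ is $H$-free: since $z$ has degree $2$ and $\chi(H)=k+1\ge 3$ with $H$ containing a critical edge (hence $H$ has minimum degree at least $2$, in fact $\delta(H)\ge k\ge 2$), any copy of $H$ in $G'$ using $z$ would need $z$ to have degree $\ge 2$ inside that copy, which it does, but then deleting $z$ and its two edges from the copy still leaves $H$ minus a vertex, and more carefully: a vertex of degree exactly $2$ cannot lie in a copy of a graph with minimum degree $\ge 3$, and for $k\ge 3$ we have $\delta(H)\ge 3$ so $G'$ is $H$-free; the case $k=2$ (so $H$ has chromatic number $3$ and a critical edge, e.g.\ an odd cycle) needs a tiny separate argument, handled by instead adding a path of length $2$ through $z$ only when $xy\notin E(G)$ and noting $z$ still cannot be a cut-type vertex of an $H$-copy unless $H$ itself has a degree-$2$ vertex, in which case one adds $z$ adjacent to $x$ and $y$ via a longer path or argues directly that a degree-$2$ vertex in a $2$-connected $H$ forces the $H$-copy to use two paths through $z$, impossible.

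Concretely, the cleanest route keeping exactly $n$ vertices: pick any vertex $u$ of $G$ of degree at most $2m/n < n$ (which exists by averaging), note that at most $O(1)$ graphs need adjusting, and form $G'$ on the same vertex set by deleting the edges at $u$ and adding $ux$ and $uy$. Then $e(G') = m - \deg_G(u) + 2 \le m$, still $e(G')\ge \beta(H)\cdot n - n$ after a harmless adjustment of the constant, and $e(G') \le t_k(n) - 10n + O(1)$, so $G'$ satisfies the hypotheses of Lemma~\ref{easycor} (with a slightly smaller $\lambda$, and $\lambda$ is monotone in $m$, which only improves the bound). Each $x$--$y$ path $P$ in $G$ that avoids $u$ gives the cycle $P + xu + uy$ in $G'$; paths through $u$ number at most $p_{x,u}(G)\cdot p_{u,y}(G)$ but more simply are bounded by the same argument applied after a different choice of $u$, or one observes that a path through $u$ in $G$ uses two edges at $u$, and there are at most $\binom{\deg_G(u)}{2} = O(n^2)$ choices for those and then the bound propagates — this is lower-order. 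So $p_{x,y}(G) \le c(G') + (\text{lower order}) = O\!\left(\lambda^{n}n^{n+2}\left(\frac{k-1}{k}\right)^ne^{\frac{2k-1}{(k-1)\lambda} - \lambda n}\right)$.

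Wait — the claimed bound has $n^n$, not $n^{n+2}$, so the reduction above loses two factors of $n$. To recover them I would avoid routing through $c(G')$ directly and instead prove the path analogue of Lemma~\ref{easycor} from scratch, mirroring its proof: presumably that proof bounds cycles by choosing a starting vertex ($n$ ways), fixing an orientation ($2$ ways), and then bounding ordered Hamilton-type walks, giving an extra $n$ (or $n^2$) relative to the per-pair count. For paths between a \emph{fixed} pair $x,y$, the starting point and endpoint are already fixed, so those combinatorial factors simply don't appear, which is exactly where the saving of roughly $n^2$ comes from. So the real plan is: open up the proof of Lemma~\ref{easycor}, which I expect bounds $c(G) \le \sum_{\ell} c_\ell(G)$ by the entropy/counting method using the degree sequence together with $H$-freeness (via Simonovits giving $\ex=t_k$ and the Zykov-symmetrisation-style structure), and observe that the same per-edge or per-vertex multiplicative estimates bound $p_{x,y}$ with the two ``free'' factors of $n$ removed. \textbf{The main obstacle} is therefore not the reduction but verifying that the internal machinery of Lemma~\ref{easycor}'s proof localises correctly to a fixed pair of endpoints — i.e.\ that nowhere does the argument secretly use that the walk can close up or that its endpoints are summed over — and handling the $k=2$ low-connectivity subtlety when attaching auxiliary structure; both are bookkeeping, but they are where an error would hide.
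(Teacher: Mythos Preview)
Your proposal has a structural problem: in the paper, the dependency runs the \emph{other} way. Lemma~\ref{easycor} is deduced \emph{from} Lemma~\ref{count} by the one-line estimate $c(G)\le\sum_{xy\in E(G)}p_{x,y}(G)\le m\cdot\max p_{x,y}$, which is exactly where the extra $n^2$ comes from. So trying to prove Lemma~\ref{count} by invoking Lemma~\ref{easycor} is circular, and the $n^2$ discrepancy you noticed is not an artefact to be patched but a signal that the reduction is backwards.

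The auxiliary-graph manoeuvres also do not stand up. Your claim that $\delta(H)\ge k$ is false in general: take $K_{k+1}$ with a pendant edge attached; this has $\chi=k+1$, a critical edge inside the clique, and minimum degree~$1$. So neither adding a degree-$2$ vertex $z$ nor rewiring a low-degree $u$ to have $N(u)=\{x,y\}$ is guaranteed to preserve $H$-freeness. The handling of $x$--$y$ paths through the sacrificed vertex $u$ is also not lower-order: fixing the two edges at $u$ still leaves a path-counting problem of the same strength on each side.

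What the paper actually does is prove Lemma~\ref{count} directly. A short greedy argument (Lemma~\ref{ref3count}) bounds $p_{x,y}(G)$ by $\prod_i \max\{r_i,1\}$ subject to $\sum r_i\le m$ and the hereditary constraints $\sum_{i\le t}r_i\le \ex(t;H)=t_k(t)$ (for $t\ge n_0(H)$, via Simonovits). One then solves this optimisation: the maximiser has $r_i=t_k(i)-t_k(i-1)$ up to some threshold $I$ and is essentially constant thereafter, and evaluating the product with Stirling gives the stated bound with the parameter $\alpha=(I-1)/n$; the edge count forces $\alpha\le(1-3/n)\lambda$. Your closing paragraph gestures at exactly this — ``open up the proof and observe that the endpoints are already fixed'' — and that instinct is correct, but the substance of Lemma~\ref{count} is this optimisation analysis, not a reduction to cycles.
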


Lemma~\ref{easycor} follows easily from Lemma~\ref{count}.

\begin{proof}[Proof of Lemma~\ref{easycor}]
Observe that for each edge $e = xy$ in $G$, the number of cycles containing $e$ is at most $p_{x,y}$. Thus, by Lemma \ref{count}
\begin{align*}
c(G) &\le \sum_{xy \in E(G)}p_{x,y}(G) \\
&= O\left(m\lambda^{n}n^{n}\left(\frac{k-1}{k}\right)^ne^{\frac{2k-1}{(k-1)\lambda} - \lambda n}\right)\\
&= O\left(\lambda^{n}n^{n+2}\left(\frac{k-1}{k}\right)^ne^{\frac{2k-1}{(k-1)\lambda} - \lambda n}\right),
\end{align*}
as required.
\end{proof}

Before proving Lemma~\ref{count}, we prove the following Lemma which allows us to consider an integer valued linear optimisation problem to find upper bounds for the number of paths between vertices in graphs with a forbidden subgraph.

\begin{lem}\label{ref3count}
Let $H$ be a graph with $\chi(H) \ge 3.$ Let $G$ be an $H$-free graph with $n$ vertices and $m$ edges, and let $x,y$ be vertices of $G.$ Then $p_{x,y}(G)$ is bounded by the maximum value of the product
\begin{equation}
\label{prod}
\prod_{i=2}^{n} \max\{r_i,1\}
\end{equation}
under the following set of constraints:
\begin{enumerate}
\item[(i)] $r_i \in \mathbb{Z}_{\ge 0}$, for $2 \le i \le n,$
\item[(ii)] $\sum_{i=2}^n r_i \le m,$ and
\item[(iii)] $\sum_{i=2}^t r_i \le \ex(t;H)$, for $2 \le t \le n$.
\end{enumerate}
\end{lem}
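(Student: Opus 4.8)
The plan is to order the vertices of $G$ according to when they are "reached" by paths from $x$, and to bound, for each position $i$ in the order, the number of ways a path from $x$ to $y$ can extend through the $i$-th vertex. Concretely, fix $x$ and $y$, and consider the collection $\mathcal{P}$ of all $x$--$y$ paths. I would like to build an ordering $v_1 = x, v_2, \ldots, v_n$ of $V(G)$ together with a quantity $r_i$ for each $i \ge 2$ so that every path in $\mathcal{P}$ is determined by a sequence of choices, where the number of options available at the step corresponding to $v_i$ is at most $\max\{r_i, 1\}$; this immediately gives $p_{x,y}(G) \le \prod_{i=2}^n \max\{r_i,1\}$. The natural choice is to take $r_i$ to be (a bound on) the number of edges from $v_i$ "backwards" to $\{v_1, \ldots, v_{i-1}\}$, i.e. the back-degree of $v_i$ in the chosen ordering: when we trace a path $x = u_0, u_1, \ldots, u_\ell = y$ and process vertices in the fixed order $v_1, \ldots, v_n$, each internal vertex $u_j$ that appears has two path-neighbours, and at the moment we "discover" $u_j$ we can charge the choice of its successor on the path to a back-edge of whichever of its two neighbours comes later in the ordering. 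Making this bookkeeping precise — so that the total number of choices telescopes into $\prod \max\{r_i,1\}$, with the "$\max$ with $1$" absorbing vertices of back-degree $0$ which contribute no real choice — is the first main step.

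The second step is to choose the ordering so that the back-degree sequence $(r_i)$ satisfies constraints (ii) and (iii). Constraint (ii), $\sum_{i=2}^n r_i \le m$, holds for any ordering, since $\sum_i r_i$ counts each edge of $G$ at most once (each edge is a back-edge of exactly one of its endpoints). For constraint (iii), $\sum_{i=2}^t r_i \le \ex(t;H)$ for every $t$, the key observation is that $\sum_{i=2}^t r_i$ is exactly the number of edges of $G$ induced on $\{v_1,\ldots,v_t\}$, and since $G$ is $H$-free, every induced subgraph of $G$ is $H$-free, so this is at most $\ex(t;H)$ by definition of the extremal number. Crucially this works for \emph{any} ordering, so there is no tension between the ordering needed for the path-counting argument and the ordering needed for the constraints — I just need some ordering starting at $x$, and a breadth-first or arbitrary order rooted at $x$ will do. Finally, $r_i \in \mathbb{Z}_{\ge 0}$ is automatic as back-degrees are nonnegative integers, giving constraint (i).

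Putting these together: for the ordering chosen, $p_{x,y}(G) \le \prod_{i=2}^n \max\{r_i,1\}$ and $(r_i)$ is a feasible point of the optimisation problem, so $p_{x,y}(G)$ is at most the maximum of the product over all feasible $(r_i)$, as claimed. I expect the main obstacle to be the careful justification in the first step that the number of $x$--$y$ paths is bounded by $\prod_i \max\{r_i,1\}$ rather than something larger — one must be sure that when a path uses an edge $v_iv_j$ with $i<j$, this "choice" is charged to $v_j$ (the later vertex) and that no path requires two independent choices to be charged to the same vertex. The clean way to see it is: having fixed the ordering, specify a path by revealing, for each $i$ from $2$ to $n$ in turn, whether $v_i$ lies on the path and if so which of its up-to-$r_i$ back-neighbours is its predecessor along the path (with the path's direction taken from $x$ towards $y$); this data determines the path, and the number of possibilities at step $i$ is at most $\max\{r_i,1\}$ (the extra $1$ covering the "$v_i$ not on the path" option when $r_i \ge 1$, and the whole step being vacuous when $r_i = 0$ since then $v_i$ cannot be an internal vertex having a predecessor, nor can it be $x$). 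This reduction to a product of local choices is exactly the content of the lemma.
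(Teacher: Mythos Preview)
Your proposed encoding does not work, and the claim that ``any ordering starting at $x$ will do'' is false. There are two problems with the encoding. First, for an arbitrary ordering $v_1=x,v_2,\ldots,v_n$, the predecessor of $v_i$ along an $x$--$y$ path need not be a back-neighbour of $v_i$: if the path visits $v_i$ before $v_j$ but $j<i$ fails, the predecessor sits later in the ordering. Second, even when the predecessor is a back-neighbour, your step-$i$ menu is ``$v_i$ not on the path'' (one option) \emph{together with} ``$v_i$ on the path with predecessor $w$'' for each of the $r_i$ back-neighbours $w$, which is $r_i+1$ options, not $\max\{r_i,1\}$; the parenthetical about ``the extra $1$'' does not account for this. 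A concrete counterexample to the product bound for arbitrary orderings: take $V=\{x,y,b_1,b_2,b_3\}$ with $x$ and $y$ each joined to every $b_i$ and $\{b_1,b_2,b_3\}$ a triangle. There are $15$ $x$--$y$ paths, but for the ordering $x,b_1,y,b_2,b_3$ the back-degrees are $r_2=1$, $r_3=1$, $r_4=3$, $r_5=4$, and $\prod_{i=2}^5\max\{r_i,1\}=12<15$.

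The paper's argument avoids all of this by not using an encoding at all. It builds the sequence greedily: set $x_1=x$, $G_1=G$, and for $i\ge 2$ let $G_i=G_{i-1}\setminus x_{i-1}$ and choose $x_i$ to maximise $p_{x_i,y}(G_i)$. The trivial recursion
\[
p_{x,y}(G)=\sum_{z\in N(x)}p_{z,y}(G\setminus x)\le d_G(x)\cdot\max_{z}p_{z,y}(G\setminus x)=d_{G_1}(x_1)\,p_{x_2,y}(G_2)
\]
iterates to $p_{x,y}(G)\le\prod_{i=1}^{\ell}d_{G_i}(x_i)$. Here the $d_{G_i}(x_i)$ are \emph{forward} degrees (degrees in the remaining graph), and the constraint comes from the tails: $\sum_{i\ge t}d_{G_i}(x_i)\le e(G_t)\le\ex(n-t+1;H)$. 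Reversing the indexing via $r_i=d_{n+1-i}$ turns tail sums into prefix sums and yields (iii). The crucial difference from your plan is that the ordering is produced by the path-counting recursion itself, and one then checks it satisfies the constraints---not the other way round.
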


\begin{proof}[Proof of Lemma \ref{ref3count}]
Fix $x,y \in V(G)$. We define a sequence of vertices $(x_i)_{i\in [n]}$ and a sequence of graphs $(G_i)_{i\in[n]}$ as follows. Let $x_1 = x$ and $G_1 = G$. For $i \ge 2$, given $x_{i-1}$ and $G_{i-1}$, let $G_i = G_{i-1}\setminus x_{i-1}$ and choose $x_i$ with $p_{x_i,y}(G_i)$ as large as possible.

We count the number of paths between $x$ and $y$ by summing over possibilities for the second vertex in a path. We get the following inequality

\begin{align*}
p_{x,y}(G) &= \sum_{z \in N(x)}p_{z,y}(G \setminus \{x\})\\
& \le d_G(x_1) \cdot \max\{p_{z,y}(G_2): z \in N(x_1)\} \\
& = d_G(x_1) p_{x_2,y}(G_2).
\end{align*}

Repeating this process gives
$$p_{x_1,y}(G) \le \prod_{i=1}^{\ell} d_{G_i}(x_i),$$
where $\ell$ is minimal such that $\max\{p_{x_{\ell+1},y}(G_{\ell+1}): x_{\ell+1} \in N_{G_{\ell}}(x_{\ell})\} = 1$.

For $1 \le i \le \ell$, let $d_i := d_{G_{i}}(x_{i})$. Note that the $d_i$ are positive integers and that $\sum_{i=1}^{\ell} d_i \le m$. Also note that for any $t \in \{1,\ldots, \ell\}$, we have $$\sum_{i=t}^{\ell} d_i \le e(G_{t}).$$ Therefore, as $G_{t}$ is an $(n-t+1)$-vertex $H$-free graph, $\sum_{i=t}^{\ell} d_i \le \ex(n-t+1;H)$. The result follows by letting $r_i = 0$ for $i = 2,\ldots,n-\ell$ and $r_i = d_{n+1-i}$ for $i = n+1-\ell,\ldots, n$.
\end{proof}

We now prove Lemma \ref{count}.

\begin{proof}[Proof of Lemma \ref{count}]
Following on from the proof of Lemma \ref{ref3count}, we consider a relaxation of the constraints given in the statement of Lemma \ref{ref3count}. Recall that $n_0:= n_0(H)$ is such that $\ex(s;H) = t_k(s)$ and $\ex(s;H) \ge 10s$ for all $s \ge n_0$. We look to maximise
\begin{equation}
\label{prod2}
\prod_{i=2}^{n} \max\{r_i,1\},
\end{equation}
under the following relaxed constraints: 
\begin{enumerate}
\item[(a)] $r_i \in \mathbb{Z}_{\ge 0}$, for $i > n_0,$
\item[(b)] $r_i \in \bR_{\ge 0}$, for $i \le n_0,$
\item[(c)] $\sum_{i=2}^n r_i \le m,$ and
\item[(d)] $\sum_{i=2}^t r_i \le \ex(t;H),$ for each $n_0 \le t \le n.$
\end{enumerate}

Since $m \ge \beta(H) n$, we have $\frac{m}{n} \ge \frac{10t_k(n_0)}{n_0-1}$. Now let $(r_i)_{i=2}^n$ be a sequence maximising \eqref{prod2} subject to (a)-(d). We may assume that $r_2,\ldots,r_{n_0}$ and $r_{n_0+1},\ldots,r_n$ are in increasing order as this will not violate (a)-(d). 
\begin{claim}
There is some $I \in [n_0+1,n-2]$ such that:
\begin{itemize}
\item[(i)] $r_i = \frac{t_k(n_0)}{n_0-1}$, for $i \le n_0$,
\item [(ii)] $r_i = t_k(i) - t_k(i-1)$, for $n_0 +1 \le i \le I$, and
\item [(iii)] $r_i \in \{r_I, r_I + 1\}$, for $i > I$. 
\end{itemize}
\end{claim}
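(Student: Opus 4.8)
The plan is to start from the given sorted maximiser $(r_i)_{i=2}^{n}$ and pin down its shape by repeated \emph{exchange} (smoothing) arguments, showing that an optimal solution must saturate the constraint $\ex(\cdot\,;H)$ greedily from the left and then spread the remaining budget as evenly as possible. Write $S_t:=\sum_{i=2}^t r_i$ and, for $i>n_0$, $\delta_i:=t_k(i)-t_k(i-1)$; recall that $\ex(t;H)=t_k(t)$ for $t\ge n_0$, that $(\delta_i)_{i>n_0}$ is a non-decreasing sequence of positive integers with $\delta_{n_0+1}=n_0-\lfloor n_0/k\rfloor=\Theta(n_0)\gg1$, and that $10n_0n\le m\le t_k(n)-10n$. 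A maximiser exists by compactness (finitely many admissible choices of the integer coordinates, and a compact polytope of real ones for each). The basic tool is the \emph{elementary move}: if $I<i<j$ with $r_j\ge r_i+2$, $r_i,r_j\ge1$, and $S_t<t_k(t)$ for every $i\le t<j$, then replacing $(r_i,r_j)$ by $(r_i+1,r_j-1)$ preserves (a)--(d) and multiplies the product by $\frac{(r_i+1)(r_j-1)}{r_ir_j}>1$; so this configuration cannot occur at a maximiser (re-sorting afterwards is harmless, as it leaves the product unchanged and only lowers prefix sums). A second ingredient is a \emph{majorisation} fact: at a \emph{saturation point} $t_0$ (meaning $n_0<t_0\le n$ and $S_{t_0}=t_k(t_0)$), if moreover $S_{n_0}=t_k(n_0)$, then the sorted sequences $(r_i)_{n_0<i\le t_0}$ and $(\delta_i)_{n_0<i\le t_0}$ have equal sum and all partial sums of the former are at most those of the latter (by (d)); hence $(r_i)$ majorises $(\delta_i)$ on this index set, and since every entry exceeds $\delta_{n_0+1}\gg1$ — so the truncation $\max\{\cdot,1\}$ is inactive — strict Schur-concavity of $x\mapsto\prod_i x_i$ forces $r_i=\delta_i$ for $n_0<i\le t_0$ at a maximiser.

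For part (i) I would first make the harmless normalisations $S_n=m$ and, for the current value of $S_{n_0}$, $r_2=\cdots=r_{n_0}$ (the standard AM--GM argument for a product under a fixed-sum constraint). The real point is to show $S_{n_0}=t_k(n_0)$ and that there is a saturation point in $[n_0+1,n]$: if there were none, then (d) would be strictly slack everywhere, so elementary moves would drive all tail coordinates to within $1$ of one another and an evenly-split transfer of mass from the (then large) tail coordinates into the (small) continuous block would strictly increase the product — a contradiction — and a short computation with $t_k(n_0)=\Theta(n_0^2)$ and $m=\Theta(n_0n)$ then pins $S_{n_0}$ to $t_k(n_0)$. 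Given $S_{n_0}=t_k(n_0)$, the common block value is $t_k(n_0)/(n_0-1)>10$, which is (i).

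For (ii) and (iii) I would define $I$ to be the largest index with $r_i=\delta_i$ for all $n_0<i\le I$; then (ii) holds by definition. Applying the majorisation fact at the least saturation point $t_0\ge n_0+1$ (which exists by the previous paragraph) gives $I\ge t_0\ge n_0+1$; and $I\le n-2$ since $I=n$ would force $S_n=t_k(n)>m$, contradicting (c), while $I=n-1$ would force $0\le r_n=S_n-t_k(n-1)\le m-t_k(n-1)\le\delta_n-10n<0$. For (iii) note $r_{I+1}<\delta_{I+1}\le\delta_I+1$ and $r_{I+1}\ge r_I=\delta_I\in\mathbb{Z}$, so $r_{I+1}=\delta_I=r_I$ and $\delta_{I+1}\ge\delta_I+1$; in particular $r_i\ge r_I$ for all $i>I$ by sortedness. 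Suppose some tail value is $\ge r_I+2$; by sortedness these form a final block $\{j,\dots,n\}$ with $j\ge I+2$, and $r_i\le r_I+1=\delta_I+1\le\delta_{I+1}\le\delta_i$ for $I<i<j$, whence
\begin{equation*}
t_k(t)-S_t=\sum_{i=I+1}^{t}(\delta_i-r_i)\ \ge\ \delta_{I+1}-\delta_I\ \ge\ 1\qquad(I+1\le t\le j-1).
\end{equation*}
Then the elementary move applied to $(r_{i^\dagger},r_j)$, where $i^\dagger$ is the largest index in $[I+1,j)$ with $r_{i^\dagger}=\delta_I$, is feasible and strictly increases the product, a contradiction; so $r_i\le r_I+1$ for all $i>I$, which is (iii).

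The hard part is part (i) — establishing that an optimal solution saturates $\ex(\cdot\,;H)$ from the left, i.e.\ that $S_{n_0}=t_k(n_0)$ and that (d) is tight at some index above $n_0$. The difficulty is that these two assertions are coupled, and separating them requires carefully tracking which constraints are tight and ruling out a near-optimal solution that leaves a little slack in (d) at $n_0$ while pushing mass into the low-index tail coordinates; this is precisely where one must combine the elementary move, the Schur-concavity input, and the size estimates $\delta_{n_0+1}=\Theta(n_0)$, $t_k(n_0)=\Theta(n_0^2)$ and $m=\Theta(n_0n)$. Once (i) (together with the existence of a saturation point) is in hand, parts (ii) and (iii) follow as above with only routine checking.
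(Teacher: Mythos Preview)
Your overall strategy---smooth by exchanges, show the optimiser saturates the prefix constraints $\sum_{i\le t}r_i\le t_k(t)$ as far as possible, then level out the tail---is exactly the paper's. The majorisation/Schur-concavity packaging for (ii) is a neat reformulation, but the content is the same as the paper's step-by-step argument, and your treatment of (iii) is essentially identical to the paper's (the paper defines $I$ via the first slack index rather than the last index with $r_i=\delta_i$, but these agree once (ii) is in hand).

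There is, however, a genuine gap at (i), which you yourself flag. Your sketch (``no saturation point $\Rightarrow$ smoothing plus mass transfer improves'') is not enough: the coupled assertions $S_{n_0}=t_k(n_0)$ and ``some $t_0>n_0$ is tight'' do not fall out of a single compactness/exchange observation, and your aside that $m=\Theta(n_0 n)$ is not correct (only $m\ge 10n_0 n$ is known; $m$ can be $\Theta(n^2)$). The paper handles (i) concretely in three short moves, each exploiting $m/n\ge 10n_0\ge 10\,t_k(n_0)/(n_0-1)$ to locate a large tail coordinate to borrow from. First, writing $T=S_{n_0}$, one shows the continuous block is $(0,\dots,0,T/S,\dots,T/S)$; differentiating $(T/x)^x$ gives $S=n_0-1$ provided $T\ge en_0$. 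If $T<en_0$, pick the least $j>n_0$ with $r_j\ge t_k(n_0)/(n_0-1)\ge 10$ (such a $j$ exists since $m/n$ is at least that quantity); then replacing $(r_2,r_j)$ by $(r_2+2,r_j-2)$ keeps (a)--(d) and strictly increases $\prod\max\{r_i,1\}$. So $T\ge en_0$ and hence $S=n_0-1$. Second, if $T<t_k(n_0)$, pick the least $j>n_0$ with $r_j>5\,t_k(n_0)/(n_0-1)$ and move one unit from $r_j$ into the continuous block (i.e.\ replace $T$ by $T+1$); again (a)--(d) persist and the product increases. Hence $T=t_k(n_0)$, which is (i). The paper then gets the anchor $r_{n_0+1}=\delta_{n_0+1}$ by the analogous move, after which $I$ is well-defined via the first slack index.

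One smaller issue in your (ii): the Schur-concavity step needs the truncation $\max\{\cdot,1\}$ to be inactive on the block $(n_0,t_0]$, and you assert without proof that ``every entry exceeds $\delta_{n_0+1}$''. That is not automatic for the $r_i$'s (a priori some could be $0$), and $\prod_i\max\{r_i,1\}$ is not Schur-concave once zeros appear (e.g.\ $(0,3)$ versus $(1,2)$). This is easy to repair---a single exchange shows $r_{n_0+1}\ge 1$ at a maximiser once (i) holds---but as written it is a gap.
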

\begin{proof}[Proof of Claim]
Let $T = \sum_{i=2}^{n_0}r_i$. Then $(r_2,\ldots,r_{n_0}) = (0,\ldots,0,\frac{T}{S},\ldots,\frac{T}{S})$ for some $S \in [n_0-1]$ (or else we can increase $\prod_{i=2}^{n_0}r_i$). We may assume that $T$ is an integer as we can replace $T$ by $\lceil T \rceil$ and still satisfy (a)-(d). Differentiation of the function $j(x) = \left(\frac{T}{x}\right)^x$ shows that if $T \ge en_0$, then $S = n_0-1$ and so $r_i = \frac{T}{n_0-1}$ for each $i \in [n_0]$.

Suppose that $T < e\cdot n_0$. Then since $\frac{m}{n} \ge \beta(H) \ge \frac{10t_k(n_0)}{n_0-1}$, there must be a $j > n_0$ such that $r_j \ge \frac{t_k(n_0)}{n_0-1} \ge 10$. Choose $j$ to be minimal with this property. It can easily be verified that increasing $r_2$ by $2$ and decreasing $r_j$ by $2$ gives a sequence which satisfies (a)-(d) but gives a larger product. Therefore it must be the case that $T \ge e\cdot n_0$ and so $S = n_0-1$.

Now suppose that (i) doesn't hold and so $e \cdot n_0 \le T < t_k(n_0)$. Since $\frac{m}{n} \ge \frac{10t_k(n_0)}{n_0-1}$, there exists some $j > n_0$ such that $r_j > \frac{5t_k(n_0)}{n_0-1}$. Choose $j$ to be minimal with this property and define $(s_i)_{i=2}^n$ by $s_i = \frac{T+1}{n_0-1}$ for $i \le n_0$, $s_j = r_j -1$ and $s_i = r_i$ otherwise. Then $(s_i)_{i =2}^n$ is a sequence satisfying (a)-(d) which gives a larger product, a contradiction. Therefore $T = t_k(n_0)$ and (i) holds.

Now suppose that (ii) does not hold and so $r_{n_0+1} < t_k(n_0+1)-t_k(n_0)$. Since $\frac{m}{n} \ge 2(t_k(n_0+1)-t_k(n_0))$, there must be a $j > n_0$ such that $r_j > t_k(n_0+1)-t_k(n_0)$. Choose $j$ to be minimal with this property and define $(s_i)_{i=2}^n$ by $s_{n_0+1} = r_{n_0+1}+1$, $s_j = r_j-1$ and $s_i = r_i$ otherwise. Then $(s_i)_{i =2}^n$ is a sequence satisfying (a)-(d) which gives a larger product, a contradiction. Therefore $r_{n_0+1} = t_k(n_0+1)-t_k(n_0)$ and (ii) holds.

Let $j > n_0+1$ be minimal such that $\sum_{i=1}^j r_i \le t_k(j)-1$ (such a $j$ must exist since $m < t_k(n)$) and set $I=j-1$. If (iii) does not hold then there exists some $t \ge j$ such that $r_j + 1 < r_t$. Let $t$ be minimal with this property, and define $s_j := r_j + 1$, $s_t := r_t - 1$, and $s_i := r_i$ for all $i \not\in \{j,t\}$. The sequence $(s_i)_{i \in [n]}$ satisfies (a)-(d) but
	\begin{align}
		\prod_{i =2}^n \max\{r_i,1\} < \prod_{i =2}^n \max\{s_i,1\}, \nonumber
	\end{align}
a contradiction. Therefore $(r_i)_{i =1}^n$ satisfies properties (i)-(iii), completing the proof of the claim.

Finally note that $I\le n-2$ follows from $m \le t_k(n) - 10n.$
\end{proof}
Putting the values for $r_i$ from the claim into (\ref{prod2}), we see that
\begin{align}
\label{prodsi}
p_{x,y} &\le  \left(\frac{t_k(n_0)}{n_0-1}\right)^{n_0-1} \prod_{i= n_0 + 1}^{I}[t_k(i) - t_k(i-1)] \prod_{i=I+1}^n r_i \nonumber \\
& = O\left(\prod_{i=2}^n s_i\right),
\end{align}
where $(s_i)$ is some sequence such that $s_i = t_k(i) - t_k(i-1)$ for $i \in \{2,\ldots, I\}$, $s_i \in \{s_I,s_I+1\}$ for $i > I$, and $m = \sum_{i=2}^n s_i$.

Note that $s_i = t_k(i) - t_k(i-1) =  (i-1) - \left\lfloor \frac{i-1}{k}\right\rfloor$ for $i \le I$. Then the sequence $(s_i)_{i=2}^I$ is just the natural numbers up to $I-1-\left\lfloor \frac{I-1}{k} \right\rfloor$ with a repetition at each multiple of $k-1$. In other words, $$\left\{s_i : i \in \left\{2,\ldots,I\right\}\setminus \left\{\ell k +1: \ell \le \frac{I-1}{k}\right\}\right\} = \left[I-1-\left\lfloor \frac{I-1}{k} \right\rfloor\right]$$ and $s_{\ell k+1} = \ell(k-1)$ for each $\ell \le \frac{I-1}{k}$. Letting $b = \left\lfloor \frac{I-1}{k}\right\rfloor$ we have
\begin{equation}
\label{si}
\prod_{i=2}^I s_i= (s_I)! \prod_{j=1}^b j(k-1) = s_I! b! (k-1)^b.
\end{equation}

The remaining $n-I$ elements of the product $\prod_{i=2}^n s_i$ are all at most $s_I+1$. Therefore, by \eqref{prodsi} and \eqref{si} we have
\begin{align}
	p_{x,y} &= O\left(\prod_{i=2}^n s_i\right) \nonumber \\
	&= O\left(s_I!b!(k-1)^b(s_I +1)^{n-I}\right) \nonumber \\
	&= O\left(s_I!b!(k-1)^b s_I^{n-I} e^{\frac{n}{s_I}}\right). \label{before1}
\end{align}

Applying Stirling's approximation and simplifying, \eqref{before1} yields
\begin{align}
	p_{x,y} = O\left(s_I^{n + s_I + 1/2 - I} b^{b+1/2}(k-1)^b \exp\left\{\frac{n}{s_I} - I\right\}\right). \nonumber
\end{align}

Since $s_I = I-1 - \left\lfloor \frac{I-1}{k}\right\rfloor \ge (k-1)\frac{I-1}{k}$ and $b = \left\lfloor \frac{I-1}{k}\right\rfloor \le \frac{I-1}{k}$, we have $b \le \frac{s_I}{k-1}$. Therefore
\begin{align}
	p_{x,y} &= O\left(s_I^{n - b - 1/2} \left(\frac{s_I}{k-1}\right)^{b+1/2}(k-1)^b \exp\left\{\frac{n}{s_I} - I\right\}\right) \nonumber \\
	&= O\left(s_I^n\exp\left\{\frac{n}{s_I} - I\right\}\right). \nonumber
\end{align}

Note that $s_I \in \left[\frac{k-1}{k}(I-1),\frac{k-1}{k}I\right]$ and so
\begin{align}
	p_{x,y} &= O\left((I-1)^n\left(\frac{k-1}{k}\right)^n \left(1+\frac{1}{I-1}\right)^n\exp\left\{\frac{kn}{(k-1)(I-1)} - (I-1)\right\}\right) \nonumber \\
	&= O\left((I-1)^n\left(\frac{k-1}{k}\right)^n \exp\left\{\frac{(2k-1)n}{(k-1)(I-1)} - (I-1)\right\}\right). \nonumber
\end{align}

Substituting $I-1 = \alpha n$ gives 
\begin{align}
	p_{x,y} &= O\left(\alpha^{n}n^{n}\left(\frac{k-1}{k}\right)^ne^{\frac{2k-1}{(k-1)\alpha} - \alpha n}\right). \label{alphaexp}
\end{align}

It remains to determine the value of $\alpha$. We do this by counting edges. Since $m = \sum_i s_i$, we see that 
\begin{align}
m \ge t_k(I) + s_I\left(n-I\right). \label{alpha}
\end{align}

Arguing as for \eqref{si}, we see that
\begin{align*}
	t_k(I) &= \sum_{i=1}^{s_I} i + (k-1)\sum_{j=1}^b j  \\
		&= \frac{1}{2}(s_I^2 + s_I + (k-1)(b^2+b)).
\end{align*}

If we put this value for $t_k(I)$ into \eqref{alpha} we see that
\begin{align*}
m &\ge \frac{1}{2}(s_I^2 + s_I + (k-1)(b^2+b))+ s_I\left(n-(I-1)\right) - s_I \\
&=\frac{1}{2}\left(s_I^2 - s_I + (k-1)(b^2+b)\right)+ s_I\left(n-(I-1)\right).
\end{align*}

Now consider that $b = \left\lfloor \frac{I-1}{k} \right\rfloor \ge \frac{I-1}{k} -1,$ so that $b^2+b \ge \left(\frac{I-1}{k}\right)^2 - \frac{I-1}{k}.$ Recall also that $s_I \ge \tfrac{k-1}{k}(I-1)$ and so

\begin{align*}
m &\ge \frac{1}{2}\left(\left(\frac{k-1}{k}\right)^2(I-1)^2 - \frac{k-1}{k}(I-1) +\frac{k-1}{k^2}(I-1)^2 - \frac{k-1}{k}(I-1)\right) \\
&+\frac{k-1}{k}(I-1)n - \frac{k-1}{k}(I-1)^2 \\
&\ge \frac{k-1}{k}n(I-1)-\frac{k-1}{2k}(I-1)^2 - 3\frac{k-1}{k}(I-1).
\end{align*}

Substituting $(I-1) = \alpha n$ and rearranging gives

\begin{align*}
\left(\left(1-\frac{3}{n}\right) - \alpha\right)^2 &\ge \left(1-\frac{3}{n}\right)^2 - \frac{2k}{k-1}\frac{m}{n^2}.
\end{align*}

Recall that $I \le n-2$ and so $\alpha \le \left(1-\frac{3}{n}\right)$. On the other side of the inequality, $\left(1-\frac{3}{n}\right)^2 - \frac{2k}{k-1}\frac{m}{n^2}$ is positive since $m \le t_k(n) - 10n$. Therefore we can take square roots and rearrange to get

\begin{align*}
	\alpha &\le \left(1-\frac{3}{n}\right) - \left(\left(1-\frac{3}{n}\right)^2 - \frac{2k}{k-1}\frac{m}{n^2}\right)^{\frac{1}{2}} = \left(1-\frac{3}{n}\right)\lambda.
\end{align*}

Since the expression $\alpha^{n}n^{n}\left(\frac{k-1}{k}\right)^ne^{\frac{2k-1}{(k-1)\alpha} - \alpha n}$ is increasing in $\alpha$ when $\alpha \le 1 -\frac{2}{n}$, \eqref{alphaexp} is maximised by setting $\alpha = \left(1-\frac{3}{n}\right)\lambda$. We are then done since

\begin{align*}
	\left(1-\frac{3}{n}\right)^n\lambda^{n}n^{n}\left(\frac{k-1}{k}\right)^ne^{\frac{2k-1}{(k-1)\left(1-\frac{3}{n}\right)\lambda} - \left(1-\frac{3}{n}\right)\lambda n} = O\left(\lambda^{n}n^{n}\left(\frac{k-1}{k}\right)^ne^{\frac{2k-1}{(k-1)\lambda} - \lambda n}\right).
\end{align*}

\end{proof}

Theorem~\ref{cyclecount} follows easily from the idea of this proof by applying the following theorem of Erd\H{o}s and Simonovits.

\begin{thm}[Erd\H{o}s and Simonovits {\cite[Theorem 1]{Sim2}}]
\label{simthm2}
Let $H$ be a graph with $\chi(H) = k+1$. Then, 
$$\lim_{n \rightarrow \infty} \frac{\ex(n;H)}{\binom{n}{2}} = 1 - \frac{1}{k}.$$
\end{thm}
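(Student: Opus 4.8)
\textbf{Plan of proof for Theorem~\ref{cyclecount}.}

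The plan is to imitate the proof of Lemma~\ref{count}/Lemma~\ref{easycor}, but with the extremal number $\ex(t;H)$ replaced by the bound coming from Theorem~\ref{simthm2}, since now $H$ need not contain a critical edge. First I would recall from the proof of Lemma~\ref{ref3count} that for any $H$-free graph $G$ with $n$ vertices and $m$ edges and any two vertices $x,y$, the number of paths $p_{x,y}(G)$ is bounded by the maximum of $\prod_{i=2}^n \max\{r_i,1\}$ subject to $r_i \in \mathbb Z_{\ge 0}$, $\sum_{i=2}^n r_i \le m$, and $\sum_{i=2}^t r_i \le \ex(t;H)$ for all $2 \le t \le n$. (That lemma only assumed $\chi(H)\ge 3$, so it applies verbatim.) By Theorem~\ref{simthm2}, $\ex(t;H) \le \tfrac12\bigl(1-\tfrac1k + o(1)\bigr)t^2$ as $t\to\infty$, so for $t$ at least some threshold $t_0$ we have $\ex(t;H) \le \tfrac{k-1}{2k}t^2(1+\varepsilon)$; for the finitely many $t < t_0$ we use a crude bound $\ex(t;H)\le \binom t2 \le t_0^2$. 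Since there are a bounded number of edges below the threshold, and since $m$ may as well be taken to be at least $\tfrac{k-1}{2k}n^2(1 - o(1))$ (for smaller $m$ the general Arman--Tsaturian bound already beats the target, or $c(G)$ is negligible), this contributes only a $2^{O(1)}$ factor.

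Next I would carry out the same rearrangement/optimisation as in the claim inside the proof of Lemma~\ref{count}: push all the ``weight'' of the $r_i$ into the largest possible prefix, so that the optimal sequence has the form $r_i \approx \tfrac{k-1}{k}\,i$ for $2\le i\le I$ (growing to use up constraint (iii)) and $r_i \in \{r_I, r_I+1\}$ for $i > I$, where $I$ is determined by the edge count $\sum r_i = m$. Concretely, with $s_I$ denoting the top value and $b \approx I/k$ the number of ``doubled'' terms, one gets $\prod_{i=2}^I r_i \approx s_I!\,b!\,(k-1)^b$ and $\prod_{i=I+1}^n r_i \le (s_I+1)^{n-I}$, exactly as in equations~\eqref{si}--\eqref{before1}. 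Stirling then gives $p_{x,y} = O\bigl(s_I^{\,n}\exp\{ n/s_I - I\}\bigr)$ with $s_I \in [\tfrac{k-1}{k}(I-1), \tfrac{k-1}{k}I]$; substituting $I = \alpha n$ yields $p_{x,y} = O\bigl(\alpha^n n^n (\tfrac{k-1}{k})^n e^{c/\alpha - \alpha n}\bigr)$ for a constant $c$. Counting edges as in~\eqref{alpha}, $m \ge t_k(I) + s_I(n-I)$, forces $\alpha \le 1 - o(1)$ once $m \ge (1-o(1))t_k(n)$; since the displayed expression is increasing in $\alpha$ on the relevant range, we may set $\alpha = 1 - o(1)$. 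This gives $p_{x,y} = O\bigl((\tfrac{k-1}{k})^n n^n e^{-(1-o(1))n}\bigr)$, and then $c(G) \le \sum_{xy\in E(G)} p_{x,y} \le m \cdot \max p_{x,y} \le n^2 \cdot (\tfrac{k-1}{k})^n n^n e^{-(1-o(1))n} = (\tfrac{k-1}{k})^n n^n e^{-(1-o(1))n}$, absorbing the polynomial factor into the $o(1)$ in the exponent.

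The main obstacle is replacing the exact equality $\ex(t;H) = t_k(t)$ (valid only for critical $H$ and used repeatedly in the proof of Lemma~\ref{count}, e.g. the clean identities for $t_k(i) - t_k(i-1)$ and for $t_k(I)$) with the asymptotic inequality from Theorem~\ref{simthm2}, and checking that the induced error terms are genuinely absorbed into the $(1-o(1))$ exponent rather than the base. In particular one must verify that the $(1+\varepsilon)$ slack in the constraint $\sum_{i\le t} r_i \le \ex(t;H)$ only perturbs $\alpha$ by a factor $1 - o(1)$ (e.g.\ by running the argument with $k$ replaced by $k/(1-\varepsilon')$ throughout and then letting $\varepsilon'\to 0$ slowly as $n\to\infty$), and that the finitely many small-$t$ constraints, together with the case of small $m$, do not affect the leading behaviour. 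All of this is routine bookkeeping given the work already done for Lemma~\ref{count}, so I would state the argument by reference to that proof and only spell out the modifications.
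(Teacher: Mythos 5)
Your proposal does not address the statement you were asked to prove. The statement is Theorem~\ref{simthm2}, the Erd\H{o}s--Simonovits result that $\lim_{n\to\infty}\ex(n;H)/\binom{n}{2}=1-\tfrac{1}{k}$ when $\chi(H)=k+1$. What you have written is a proof plan for Theorem~\ref{cyclecount}, and it explicitly invokes Theorem~\ref{simthm2} (``By Theorem~\ref{simthm2}, $\ex(t;H)\le\tfrac12(1-\tfrac1k+o(1))t^2$'') as an ingredient; as an argument for the stated result it is therefore circular, and the actual content of the theorem --- the determination of the Tur\'an density of a general $H$ of chromatic number $k+1$, with no critical-edge assumption --- is never engaged with. (The paper itself offers no proof either: it cites the result from the literature, so there is no internal proof to compare against, but that does not make your sketch a proof of it.)

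If you do want to prove Theorem~\ref{simthm2}, the standard two-line argument is available from results already quoted in the paper. For the lower bound, $T_k(n)$ is $k$-colourable while $\chi(H)=k+1$, so $T_k(n)$ is $H$-free and $\ex(n;H)\ge t_k(n)=\bigl(1-\tfrac1k+o(1)\bigr)\binom n2$. For the upper bound, apply the Erd\H{o}s--Stone theorem (Theorem~\ref{erdstone}, with $k+1$ parts): for any $\varepsilon>0$ and any fixed $t$, every $n$-vertex graph with $e(G)\ge\bigl(1-\tfrac1k+\varepsilon\bigr)\binom n2$ contains $T_{k+1}\bigl((k+1)t\bigr)$ once $n$ is large; choosing $t\ge|V(H)|$, this complete $(k+1)$-partite graph contains $H$ because any proper $(k+1)$-colouring of $H$ embeds its colour classes into the $k+1$ parts. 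Hence $\ex(n;H)\le\bigl(1-\tfrac1k+\varepsilon\bigr)\binom n2$ for large $n$, and letting $\varepsilon\to0$ gives the limit. Your bookkeeping for Theorem~\ref{cyclecount} is broadly in line with how the paper deduces that theorem from Lemma~\ref{ref3count} and Theorem~\ref{simthm2}, but that is a different task from the one posed.
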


\begin{proof}[Proof of Theorem \ref{cyclecount}]
Let $\varepsilon >0$. By Theorem \ref{simthm2} and the fact that $t_k(n) \sim \left(1 - \frac{1}{k}\right)\binom{n}{2}$, we know that for $n$ sufficiently large, $\ex(n;H) \le (1 + \varepsilon)t_k(n)$. Thus, for $n$ sufficiently large, $\ex(s;H) \le (1 + \varepsilon)t_k(s)$ for all $n^{\frac{1}{2}} \le s \le n$. For ease of notation, let $n_1 := n^{\frac{1}{2}}$.

To bound the number of cycles in the graph, we wish to bound $p_{x,y}(G)$ for $x,y \in V(G)$. From Lemma~\ref{ref3count}, we see that it is enough to bound the product $$\prod_{i=2}^n \max\{r_i,1\},$$ where $(r_i)$ satisfies the relaxed conditions:
\begin{enumerate}
\item[(i)] $r_i \in \bR^+$, for all $i$, and
\item[(ii)] $\sum_{i=2}^t r_i \le (1+ \varepsilon)t_k(t)$, for each $n_1 \le t \le n$.
\end{enumerate}

It is easily seen that this expression is maximised when $r_i :=\frac{(1+\varepsilon)t_k(n_1)}{n_1 - 1}$ for $i=2,\ldots,n_1$ and $r_i = (1+\varepsilon)(t_k(i)-t_k(i-1))$ otherwise. Therefore, we arrive at the following bound:
\begin{align}
\prod_{i=2}^n r_i &\le \left(\frac{(1+\varepsilon)t_k(n_1)}{n_1 - 1}\right)^{n_1 - 1}\prod_{i=n_1+1}^n(1+\varepsilon)(t_k(i)-t_k(i-1)) \nonumber \\
&= O\left(e^{n_1}\prod_{i=2}^n(1+\varepsilon)(t_k(i)-t_k(i-1))\right) \nonumber \\
&=  O\left(e^{\varepsilon n + n_1} \prod_{i=2}^n(t_k(i)-t_k(i-1))\right). \label{nuisance43}
\end{align}

Recall from \eqref{si} that, defining $b = \left\lfloor \frac{n-1}{k}\right\rfloor,$ we have
\begin{align*}
	\prod_{i=2}^n(t_k(i)-t_k(i-1)) &= (n-1-b)!b!(k-1)^b.
\end{align*}

Applying Stirling's approximation and simplifying gives
\begin{align*}
	\prod_{i=2}^n(t_k(i)-t_k(i-1)) &= O\left((n-1-b)^{n-1-b + 1/2}b^{b+1/2}e^{-n}(k-1)^b\right) \\
	&= O\left(\left(\frac{k-1}{k}\right)^n n^{n+1}e^{-n}\right).
\end{align*}

Putting this into \eqref{nuisance43} gives
\begin{equation}
\label{pathcount}
p_{x,y} = O\left(\left(\frac{k-1}{k}\right)^n n^{n+1} e^{\varepsilon n + n_1 - n}\right).
\end{equation}
Now, as in the proof of Lemma \ref{easycor}, we see that by \eqref{pathcount} and the fact that $n_1 = o(n)$,
\begin{align*}
c(G) &\le \sum_{xy \in E(G)} p_{x,y}\\
&= O\left(n^2 \left(\frac{k-1}{k}\right)^n n^{n+1} e^{\varepsilon n + n_1 - n}\right)\\
&= O\left(\left(\frac{k-1}{k}\right)^nn^ne^{-(1-2\varepsilon)n}\right).
\end{align*}
Since $\varepsilon$ is arbitrary, we have our result.
\end{proof}

\section{Proof of Theorem \ref{main}}\label{secmain}
Here we complete the proof of Theorem~\ref{main}. This will follow from the next two lemmas. 

The first gives a lower bound on the number of edges in an extremal graph. (See also \cite[Theorem 5.3.2]{arman} for a $K_{k+1}$ version.)

\begin{lem}\label{edgecount}
Let $H$ be a graph $\chi(H) = k+1 \ge 3$ containing a critical edge. For sufficiently large $n$, let $G$ be an $n$-vertex $H$-free graph with $m$ edges and $c(G) \ge c(T_k(n))$. Then $m \ge \frac{n^2(k-1)}{2k} - O\left(n\log^2(n)\right)$.
\end{lem}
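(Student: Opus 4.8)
\textbf{Proof proposal for Lemma~\ref{edgecount}.}

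The plan is to run a contrapositive-style argument: suppose $G$ is an $n$-vertex $H$-free graph with $c(G) \ge c(T_k(n))$ but with relatively few edges, say $m \le \frac{n^2(k-1)}{2k} - C n\log^2 n$ for a suitably large constant $C = C(H)$, and derive a contradiction by showing $c(G) < c(T_k(n))$. The two ingredients we already have are: from Lemma~\ref{kKMain} (via $c(T_k(n)) \ge h(T_k(n))$), a lower bound $c(T_k(n)) = \Omega\!\left(\left(\frac{k-1}{k}\right)^n n^{n-1/2} e^{-n}\right)$; and from Lemma~\ref{easycor}, an upper bound on $c(G)$ valid when $\beta(H)\cdot n \le m \le t_k(n) - 10n$, namely $c(G) = O\!\left(\lambda^n n^{n+2}\left(\frac{k-1}{k}\right)^n e^{\frac{2k-1}{(k-1)\lambda} - \lambda n}\right)$ with $\lambda = 1 - \bigl(1 - \frac{2k}{k-1}\frac{m}{(n-3)^2}\bigr)^{1/2}$.

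First I would dispose of the easy regime $m < \beta(H)\cdot n$: here $G$ has only linearly many edges, so its cyclomatic number is $O(n)$ and $c(G) \le 2^{O(n)}$, which is negligible compared with $c(T_k(n))$ (which grows like $n^{n(1-o(1))}$); so such $G$ cannot be extremal and we may assume $m \ge \beta(H)\cdot n$. Since also $m \le \frac{n^2(k-1)}{2k} - Cn\log^2 n \le t_k(n) - 10n$ for large $n$, Lemma~\ref{easycor} applies. Now the heart of the matter is a calibration of the exponent. Write $m = \frac{k-1}{2k} n^2 - \delta$ where $\delta \ge C n \log^2 n$. Plugging into the definition of $\lambda$ and expanding the square root, $\lambda = 1 - \bigl(1 - \frac{2k}{k-1}\frac{m}{(n-3)^2}\bigr)^{1/2}$; when $m$ is near $\frac{k-1}{2k}n^2$, the quantity inside is near $0$, so $1-\lambda$ is small and $\lambda$ is near $1$. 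The key comparison is between the factor $\lambda^n e^{-\lambda n}$ in the bound for $c(G)$ and the factor $e^{-n}$ (with no shrinking base) in the bound for $c(T_k(n))$. I would show that $\lambda^n e^{-\lambda n} = e^{n(\log\lambda - \lambda)}$, and since $\log \lambda - \lambda$ is maximised at $\lambda = 1$ with value $-1$, and its second derivative is bounded away from $0$ near $\lambda = 1$, we get $\log\lambda - \lambda \le -1 - c(1-\lambda)^2$ for some constant $c>0$. So $c(G) \le n^{n+2}\left(\frac{k-1}{k}\right)^n e^{-n} \cdot e^{-cn(1-\lambda)^2 + O(1/\lambda)}$. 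To beat $c(T_k(n)) = \Omega(n^{n-1/2}\left(\frac{k-1}{k}\right)^n e^{-n})$, it suffices that $e^{-cn(1-\lambda)^2} \cdot n^{n+2} \ll n^{n-1/2}$, i.e.\ that $cn(1-\lambda)^2 \gg \tfrac{5}{2}\log n$, i.e.\ $(1-\lambda)^2 \gg \frac{\log n}{n}$, i.e.\ $1-\lambda \gg \sqrt{\frac{\log n}{n}}$.

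So the final step is to translate the edge deficit $\delta \ge Cn\log^2 n$ into a lower bound on $1-\lambda$. From $1-\lambda = \bigl(1 - \frac{2k}{k-1}\frac{m}{(n-3)^2}\bigr)^{1/2}$ and $m = \frac{k-1}{2k}n^2 - \delta$, a short computation gives $1 - \frac{2k}{k-1}\frac{m}{(n-3)^2} = \frac{2k}{k-1}\frac{\delta}{(n-3)^2} + O(1/n)$ (the $O(1/n)$ coming from replacing $n^2$ by $(n-3)^2$), so $(1-\lambda)^2 = \Theta(\delta/n^2) = \Omega(\log^2 n / n)$, whence $1-\lambda = \Omega\bigl(\sqrt{\log^2 n/n}\,\bigr) = \Omega\bigl(\frac{\log n}{\sqrt n}\bigr)$, which comfortably exceeds the required $\sqrt{\log n/n}$ threshold (that is precisely why the $\log^2 n$ rather than $\log n$ appears in the statement — it gives a polynomial-in-$n$ gain in the exponent, swamping all the polynomial factors). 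This yields $c(G) < c(T_k(n))$, the desired contradiction, so in fact $m \ge \frac{n^2(k-1)}{2k} - O(n\log^2 n)$.

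The main obstacle I anticipate is purely bookkeeping: carefully controlling the several polynomial-in-$n$ factors ($n^{n+2}$ versus $n^{n-1/2}$, the constant hidden in $\Omega$ for $c(T_k(n))$, the $e^{2k-1/((k-1)\lambda)}$ term which is $O(1)$ since $\lambda \to 1$) and verifying the constant $C$ can be chosen uniformly in $n$ and depending only on $H$. There is no conceptual difficulty beyond the convexity fact $\log\lambda - \lambda \le -1 - c(1-\lambda)^2$ near $\lambda=1$; the rest is matching exponents and ensuring the $\delta = \Omega(n\log^2 n)$ deficit produces a $\log^2 n$-sized term in the exponent of $c(G)/c(T_k(n))$ that drives the ratio below $1$.
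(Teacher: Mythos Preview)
Your approach mirrors the paper's: compare the upper bound from Lemma~\ref{easycor} against the lower bound from Lemma~\ref{kKMain}, take logarithms, exploit the concavity of $\log\lambda + (1-\lambda)$ at its maximum $\lambda = 1$, and translate the resulting threshold on $\lambda$ back to a bound on $m$. Your quadratic estimate $\log\lambda + (1-\lambda) \le -c(1-\lambda)^2$ in fact holds globally on $(0,1]$ with $c = \tfrac12$, and your final calibration $(1-\lambda)^2 = \Theta(\delta/n^2)$ is exactly what the paper extracts in its closing line.

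There is, however, one genuine gap. You assert that $e^{(2k-1)/((k-1)\lambda)} = O(1)$ ``since $\lambda \to 1$'', but nothing in your setup forces $\lambda$ near $1$: your hypothesis is only $\beta(H)\,n \le m \le \tfrac{k-1}{2k}n^2 - Cn\log^2 n$, and at the bottom of that range $m = \Theta(n)$, giving $\lambda = \Theta(1/n)$ and hence $e^{(2k-1)/((k-1)\lambda)} = e^{\Theta(n)}$. Your quadratic bound then contributes only $-\tfrac{n}{2}(1-\lambda)^2 \approx -n/2$, and you have not checked that this beats the $+\Theta(n)$ coming from $1/\lambda$. The paper handles this with an explicit case split at $\lambda = e^{-2}$: for small $\lambda$ it uses the cruder bound $\log\lambda + (1-\lambda) \le \tfrac12\log\lambda$, and --- crucially --- its ``easy regime'' covers \emph{all} $m = O(n)$ (via Lemma~\ref{ref3count}, giving $p_{x,y} \le e^{m/e}$), not just $m < \beta(H)\,n$, so that in the remaining range $m$ is superlinear and $\lambda^{-1} = O(n^2/m) = o(n)$. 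If you likewise extend your easy regime to all $m = O(n)$ --- the same cyclomatic-number bound $c(G) \le 2^{O(n)}$ suffices --- then in what remains $\lambda^{-1} = o(n)$, the offending term really is negligible, and your argument goes through verbatim and matches the paper.
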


Given this lemma, we can apply Theorem~\ref{stable} to show that any extremal graph $G$ is close to being $k$-partite. We then carefully count the number of cycles in such a graph. In what follows, for a graph $G$ and a $k$-partition of its vertices, we call edges within a vertex class \emph{irregular} and those between vertex classes \emph{regular}. Define a \emph{best} $k$-partition of a graph $G$ to be one which minimises the number of irregular edges contained within $G$. The next lemma counts the cycles using only regular edges if $G$ is not $T_k(n)$. Recall that $c_r(G)$ is the number of cycles of length $r$ in $G$.

\begin{lem}\label{regcycle}
Let $H$ be a graph with $\chi(H) = k+1 \ge 3$ containing a critical edge. Suppose $G\not\cong T_k(n)$ is an $n$-vertex $H$-free graph with $c(G) \ge c(T_k(n))$. Then for sufficiently large $n$, the number of cycles using only regular edges in the best $k$-partition of $G$ is at most:
$$\left\{
\begin{array}{c l}      
    c(T_k(n)) - \frac{1}{16k}h(T_k(n)) & \text{ for } k\ge 3,\\
    c(T_2(n)) - \frac{1}{8}c_{2\lfloor \frac{n}{2} \rfloor}(T_2(n)) & \text{ for } k=2.
\end{array}\right.$$
\end{lem}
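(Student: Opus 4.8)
The plan is as follows. Fix a best $k$-partition of $G$ with vertex classes $W_1,\dots,W_k$ of sizes $a_1,\dots,a_k$, and let $G'$ be the $k$-partite spanning subgraph obtained by deleting all irregular edges; the cycles we must bound are exactly the cycles of $G'$. By Theorem~\ref{stable} applied with $f(n)=O(n\log^2 n)$ (using Lemma~\ref{edgecount} for the edge count), $G$ is within $O(\log^3 n)$ edges of being $k$-partite, so $G'$ differs from $G$ in a sublinear number of edges and $G'$ itself has at least $t_k(n)-O(\log^3 n)$ edges. I would first handle the case where $G'$ is a complete $k$-partite graph $K_{\underline a}$: since $G\not\cong T_k(n)$, either $\underline a$ is unbalanced, or $\underline a$ is balanced but $G$ has at least one irregular edge. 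In the unbalanced case, Lemma~\ref{Turanbest} already gives $c(G')\le c(T_k(n))$, and in fact $c(G')< c(T_k(n))$, with the loss controllable: using Lemmas~\ref{close}--\ref{Turancount} one sees that moving a single vertex between a largest and a smallest class changes $h_v(2,\cdot)$ by a multiplicative factor bounded away from $1$, which (since $h$ is a constant fraction of $c$ by Lemma~\ref{secondcount}, resp.\ Lemma~\ref{second2count} for $k=2$) costs at least a $\frac1{16k}$-fraction of $h(T_k(n))$; the balanced-with-an-irregular-edge subcase is immediate from Lemma~\ref{Turanbest} since then $G'$ is a proper $k$-partite subgraph on the balanced sizes.

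The substantive case is when $G'$ is \emph{not} complete $k$-partite, i.e.\ it misses at least one cross edge. Here the strategy is to show that missing even a single cross pair destroys a constant fraction of the would-be Hamilton cycles, and more generally a constant fraction of cycles of every relevant length. Concretely, write $G' = K_{\underline a}\setminus M$ where $M$ is the set of missing cross pairs, $|M|=O(\log^3 n)$. I would bound $c_r(K_{\underline a}) - c_r(G')$ below by counting cycles of length $r$ through a fixed missing pair $uv$: the number of $r$-cycles of $K_{\underline a}$ using the edge $uv$ is, by a computation of the same flavour as Lemma~\ref{Turancount} (ratios $h_v(j,\cdot)/h(\cdot)$ are $\Theta(1/k)$), at least a $\Theta(1/k)$-fraction of $c_r(K_{\underline a})$ for $r$ in the bulk range; summing over lengths and comparing with $K_{\underline a}$ (whose cycle count is $\le c(T_k(n))$ by Lemma~\ref{Turanbest}, with the balanced sizes maximising) gives $c(G')\le c(K_{\underline a}) - \Theta(1/k)\,h(K_{\underline a}) \le c(T_k(n)) - \Theta(1/k)\,h(T_k(n))$, where Lemma~\ref{recursion} (resp.\ the $k=2$ analogue) is used to control $h(K_{\underline a})$ from below by $\Omega(h(T_k(n)))$ even when $\underline a$ is mildly unbalanced (the unbalance is $O(\log^3 n)$, far too small to change $h$ by more than a $1+o(1)$ factor, via Lemma~\ref{close}). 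Choosing the constants so that the loss is at least $\frac1{16k}h(T_k(n))$ for $k\ge3$ and $\frac18 c_{2\lfloor n/2\rfloor}(T_2(n))$ for $k=2$ finishes the lemma; when $k=2$ one only counts cycles of the single near-Hamilton length and uses Lemma~\ref{kKMain} and Lemma~\ref{second2count} in place of Lemma~\ref{secondcount}.

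The main obstacle I anticipate is the ``single missing edge costs a constant fraction'' estimate done uniformly over \emph{all} cycle lengths $r$, not just $r=n$: for a single fixed missing cross pair $uv$, the fraction of $r$-cycles of $K_{\underline a}$ that use $uv$ is comfortably $\Theta(1/k)$ only when $r$ is not too small (say $r\ge \varepsilon n$), and short cycles must be dealt with separately — but short cycles are a negligible fraction of the total count (the cycle count is dominated by lengths within $O(1)$ of $n$, by the same Stirling estimates underlying Lemmas~\ref{secondcount} and \ref{second2count}), so one can afford to simply discard them. Making this dichotomy clean, and making sure the $O(\log^3 n)$ unbalance genuinely perturbs $h(K_{\underline a})$ and the relevant $h_v(j,\cdot)$ by only a $1+o(1)$ factor (this is exactly what Lemma~\ref{close} is for, since $\prod_i e^{|\log(b_i/c_i)|} = 1+o(1)$ when $\sum_i|b_i-c_i| = O(\log^3 n)$ and each class has size $\Theta(n)$), is where the real care is needed; everything else is bookkeeping with the lemmas of Section~\ref{seckpart}.
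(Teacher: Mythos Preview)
Your central quantitative claim --- that a single missing cross edge $uv$ kills a $\Theta(1/k)$-fraction of the Hamilton cycles of $K_{\underline a}$ --- is false. The number of Hamilton cycles of $K_{\underline a}$ through a fixed edge $uv$ is $\Theta(h(K_{\underline a})/n)$, not $\Theta(h(K_{\underline a})/k)$: every Hamilton cycle has $n$ edges, there are $\Theta(n^2)$ cross edges, and by near-symmetry each is on a $\Theta(1/n)$-fraction of them. You seem to be conflating $h_v(j,K_{\underline a})$ (Hamilton cycles through the \emph{vertex} $v$ whose first step enters class $V_j$; this is what Lemma~\ref{Turancount} bounds as $\Theta(h/k)$) with the number of Hamilton cycles through the \emph{edge} $uv$. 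Even summing over all $|M|=O(\log^3 n)$ missing cross edges, the total loss you can account for is $O\!\left(\tfrac{\log^3 n}{n}\right) h(K_{\underline a}) = o(h(T_k(n)))$, far short of the $\tfrac1{16k}h(T_k(n))$ deficit required. Your ``$G'$ complete balanced with an irregular edge'' subcase is also mishandled: there $G'=T_k(n)$ exactly, so $c(G')=c(T_k(n))$ and Lemma~\ref{Turanbest} gives no gap whatsoever.

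What you are missing is the mechanism by which $H$-freeness forces $G^R$ to be missing \emph{many} (in fact $\Omega(n)$) cross edges at a single vertex. The paper never relies on $G^R$ missing a cross edge a priori; instead it takes an irregular edge $uv$ (which must exist, since $G$ is not $k$-partite by Lemma~\ref{Turanbest}) and applies the Erd\H{o}s--Stone theorem (Theorem~\ref{erdstone}) to the common neighbourhood of $u$ and $v$: if $u,v$ had $n/(10k)$ common neighbours in every other class, a dense enough $k$-partite piece would sit inside $G^R$ to yield $T_k(tk)+e\supseteq H$ in $G$, contradicting $H$-freeness. Hence some class $V_2$ has $|N(u)\cap N(v)\cap V_2|<n/(10k)$, and so one of $u,v$ --- say $v$ --- satisfies $|N(v)\cap V_2|\le 5n/(8k)$. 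Now $G^R$ is missing a constant fraction of the edges from $v$ into $V_2$, and \emph{this} is what kills $\tfrac38$ of the Hamilton cycles counted by $\tfrac12 h_v(2,K_{\underline a})$, which by Lemmas~\ref{close} and~\ref{Turancount} is at least $(1+o(1))\tfrac{1}{3k}h(T_k(n))$. Once this step is in place, the remainder of your outline (using Lemmas~\ref{Turanbest}, \ref{close}, \ref{secondcount} for $k\ge 3$, and the length-$2\lfloor n/2\rfloor$ analysis for $k=2$, absorbing the $o(n)$ class-size imbalance via Lemma~\ref{close}) is essentially the correct bookkeeping.
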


Given Lemmas~\ref{edgecount} and \ref{regcycle}, we now complete the proof of Theorem~\ref{main}. We will then prove the lemmas themselves. The main work remaining for Theorem~\ref{main} is to count the number of cycles using irregular edges. 

\begin{proof}[Proof of Theorem \ref{main}]
Let $H$ be a graph with a critical edge with chromatic number $\chi(H) = k+1 \ge 3$, and suppose $G$ is an $n$-vertex $H$-free graph with $c(G) = m(n;H)$. Then, in particular, $c(G) \ge c(T_k(n))$. Suppose for a contradiction that $G$ is not isomorphic to $T_k(n)$. Fix a best $k$-partition of $G$: by Lemma \ref{edgecount} and Theorem \ref{stable}, we know that for sufficiently large $n$, the graph $G$ has at most $n^{0.55}$ irregular edges in its best $k$-partition.

Let $c^I(G)$ be the number of cycles in $G$ containing at least one irregular edge and let $c^R(G)$ be the number of cycles in $G$ using only regular edges. If $c^I(G) = o(h(T_k(n))$, then by applying Lemma~\ref{regcycle} and taking $n$ sufficiently large, we have $c(G) = c^R(G) + c^I(G) < c(T_k(n))$. Thus $c^I(G) = \Omega(h(T_k(n))).$

Let $E_I$ be the set of irregular edges in $G$. For each non-empty $A \subseteq E_I$, let $C_A$ be the set of cycles $C$ in $G$ such that $E(C) \cap E_I = A$ and such that $C$ contains at least one regular edge. Fix $A$ such that $C_A$ is non-empty and fix an edge $a_1 a_2 \in A$. (Note that $A$ must be a vertex-disjoint union of paths or else it would not be possible to have a cycle using all edges in $A$.) For any cycle $C = x_1 x_2 \cdots x_j$ in $C_A$, with $x_1 = a_1$ and $x_2 = a_2$, define $S(C)$ to be the directed cycle $x_1x_2\cdots x_j$ (so for all $i$, the edge $x_ix_{i+1}$ is directed towards $x_{i+1}$, where indices are taken modulo $j$).
 
For each $C \in C_A$, the orientation of $S(C)$ induces an orientation $f_C$ on the edges of $A$. Given a fixed orientation $f$ of $A$, we write
 $$C_A(f):= \left\{C \in C_A: f_C = f\right\}.$$
 We will bound the size of each $C_A(f)$. A bound on $c^I(G)$ will then follow by summing over all possible $A$ and $f$.  
 
Let $G/A$ be the graph obtained by contracting every edge in $A$. Then remove the remaining irregular edges to form $J$ (so $J$ is an $H$-free $k$-partite graph with $n-|A|$ vertices, as $A$ is a vertex-disjoint union of paths, and each edge of $A$ lies inside some vertex class of our $k$-partition). For each cycle $C$ in $C_A(f)$, we obtain an oriented cycle $g(C)$ in $J$ by replacing each maximal path $u_1\cdots u_j$ in $E(C) \cap A$ oriented from $u_1$ to $u_j$ by $u_1$. As $C$ contains at least one regular edge, $g(C)$ is either an edge or cycle in $J$.
 
We claim that $g$ is injective on $C_A(f)$. Indeed suppose that there exists a cycle $C \in C_A(f)$. Recall that $A$ is a vertex-disjoint union of paths and furthermore that $f$ orients the paths of $A$. Denote these oriented paths $\left(u^1_i\right)_{i \in [\ell_1]},\ldots,\left(u^t_i\right)_{i \in [\ell_t]}$. Each cycle $C \in C_A(f)$ must contain these oriented paths as segments (each edge of $A$ must be contained in $C$ and it is not possible to break up a path or else a vertex must be adjacent to more than two edges in the cycle). Therefore we have an inverse of $g$ which takes a cycle from $g\left(C_A(f)\right)$ and replaces each instance of $u^j_1$ with the path $u^j_1\cdots u^j_{\ell_j}$.

As $J$ is a $k$-partite graph on $n-|A|$ vertices, by Lemma~\ref{Turanbest} we have 
$$c(J) \le c(T_k(n-|A|)).$$
Recall that for each $C \in C_A(f)$, $g(C)$ is either an edge or a cycle in $J$. We therefore have
$$|C_A(f)| \le 2\cdot c(T_k(n-|A|)) + 2|E(T_k(n))| \le 4 \cdot c(T_k(n-|A|)),$$
for sufficiently large $n$ by applying Lemma~\ref{kKMain} and recalling that $|A| \le n^{0.55}$.
Let $F_A$ be the set of all possible orientations $f$ of $A$. We have
\begin{equation}
\label{irreg}
c^I(G) \le \left|E_I\right|^{|E_I|} + \sum_{A\subseteq E_I}\sum_{f \in F_A}|C_A(f)|,
\end{equation}
where the first term counts cycles that contain only irregular edges and the second term counts cycles in $c^I(G)$ that contain both a regular and irregular edge.

We will bound the second term of this expression. Recalling that there are at most $n^{0.55}$ irregular edges, we get that
$$\sum_{A\subseteq E_I}\sum_{f \in F_A}|C_A(f)| \le \sum_{i=1}^{n^{0.55}}{\binom{n^{0.55}}{i}}2^i\cdot 4 \cdot c(T_k(n-i)).$$

For $k \ge 3$, we now apply Lemma \ref{secondcount} and Lemma \ref{recursion} for each $i$ in the sum,
	\begin{align}
		\sum_{A\subseteq E_I}\sum_{f \in F_A}|C_A(f)|&\le \sum_{i=1}^{n^{0.55}}{\binom{n^{0.55}}{i}}e^{\frac{2k}{k-2}}2^{i+2}h(T_k(n-i)) \nonumber \\
		&\le 4e^{\frac{2k}{k-2}}\sum_{i=1}^{n^{0.55}}{\binom{n^{0.55}}{i}}\left(\frac{2k}{k-2}\right)^i\frac{h(T_k(n))}{(n-1)_i} \nonumber \\
		&\le 4e^6 h(T_k(n)) \sum_{i\ge 1} n^{0.55i}\left(\frac{6}{n-n^{0.55}}\right)^i \nonumber \\
		&= o\left(h(T_k(n))\right). \nonumber
	\end{align}
We have $|E_I|^{|E_I|} \le (n^{0.55})^{n^{0.55}}$ which is $o(h(T_k(n)))$ by Lemma~\ref{kKMain}. Therefore, using \eqref{irreg} we see that $c^I(G) = o(h(T_k(n))$, a contradiction. Therefore $G$ is isomorphic to $T_k(n)$.

Similarly for $k=2$, we apply Lemma \ref{second2count} to get
	\begin{align}
		\sum_{A\subseteq E_I}\sum_{f \in F_A}|C_A(f)|&\le \sum_{i=1}^{n^{0.55}}{\binom{n^{0.55}}{i}}2^i\cdot 8e \cdot \left(\frac{4}{n}\right)^i c_{2\lfloor n/2 \rfloor}(T_2(n)) \nonumber \\
		&\le 8e \cdot c_{2\lfloor n/2 \rfloor}(T_2(n)) \sum_{i = 1}^{n^{0.55}}n^{0.55i}\left(\frac{8}{n}\right)^i \nonumber \\
		&= o\left(c_{2\lfloor n/2 \rfloor}\left(T_2(n)\right)\right), \nonumber
	\end{align}
and we conclude as before.
\end{proof}

We now present the proofs of Lemmas~\ref{edgecount} and \ref{regcycle}.

\begin{proof}[Proof of Lemma~\ref{edgecount}]
First suppose that $m = O(n)$. We can then crudely bound $p_{x,y}(G)$ by Lemma \ref{ref3count}. By \eqref{prod} and constraints (i) and (ii) above we have
	\begin{align}
		p_{x_1,y}(G) \le \max_{\ell}\prod_{i=1}^{\ell}r_i \le \max_{\ell}\left(\frac{m}{\ell}\right)^{\ell}. \nonumber
	\end{align}	

The function $f(x) = \left(\frac{m}{x}\right)^x$ is maximised at $x = \frac{m}{e}$ and so $p_{x_1,y}(G) \le e^{\frac{m}{e}} = e^{O(n)}$. This is asymptotically smaller than $c(T_k(n))$ by Lemma~\ref{kKMain}.

So $m \not= O(n)$. Suppose that $m \le t_k(n)-10n$ (otherwise we are done so assume) so that we obtain a bound for $c(G)$ from Corollary \ref{easycor}. Dividing this bound by $c(T_k(n)) = \Omega((\frac{k-1}{k})^nn^{n-\frac{1}{2}}e^{-n})$ gives
	\begin{align}
		\frac{c(G)}{c(T_k(n))} = O\left(\lambda^{n}n^{2.5}e^{\frac{2k-1}{(k-1)\lambda} + \left(1-\lambda\right) n}\right), \label{nuisance}
	\end{align}
where $\lambda$ is defined in \eqref{alphadef}.

If we take the logarithm of the right hand side and call it $R$ for ease of notation, we get
	\begin{align}
		R &\le 2.5\log(n) + n(\log(\lambda)+(1-\lambda)) + \frac{2k-1}{(k-1)\lambda} + O(1) \nonumber \\
		&\le 2.5\log(n) + n(\log(\lambda)+(1-\lambda)) + 3\lambda^{-1} + O(1). \nonumber
	\end{align}
	
First assume that $\lambda \le 1- n^{-\frac{1}{2}}\log(n)$: we will show that then $R \rightarrow -\infty$ and so \eqref{nuisance} is $o(1)$.

If $\lambda \le e^{-2}$, then $\log(\lambda) + (1-\lambda) \le \frac{\log(\lambda)}{2}$. Furthermore we see from \eqref{alphadef} that $\lambda = \Omega\left(\frac{m}{n^2}\right)$ and so $\lambda^{-1} = o(n)$. Therefore
	\begin{align}
		R &\le 2.5 \log(n) + \frac{n}{2}\log(\lambda) + o(n) \nonumber \\
		&\le 2.5 \log(n) - n + o(n) \rightarrow -\infty, \nonumber	
	\end{align}
as $n$ tends to infinity.

Otherwise, $\lambda^{-1} \le e^2$ and since (by assumption) $\lambda \le 1- n^{-\frac{1}{2}}\log(n)$, we may apply Taylor's theorem to see
	\begin{align}
		R &\le 2.5\log (n) - n(1-\lambda)^2 + 3e^2 \nonumber \\
		&\le 2.5\log(n) - \log^2(n) + 3e^2 \rightarrow -\infty, \nonumber
	\end{align}
as $n$ tends to infinity.

In either case $R$ tends to $-\infty$ for sufficiently large $n$, and we must have that $c(G) < c(T_k(n))$, a contradiction.

Therefore $\lambda > 1 - \log(n)n^{-\frac{1}{2}}$. Equation \eqref{alphadef} now allows us to conclude that $m \ge t_k(n) - O\left(n\log^2(n)\right)$, as required.
\end{proof}

For the proof of Lemma~\ref{regcycle} we require the Erd\H{o}s-Stone Theorem \cite{erd-stone}.

\begin{thm}[Erd\H{o}s-Stone \cite{erd-stone}]
\label{erdstone}
Let $k \ge 2$, $t \ge 1$, and $\varepsilon > 0$. Then for $n$ sufficiently large, if $G$ is a graph on $n$ vertices with
$$ e(G) \ge \left(1 - \frac{1}{k-1} + \varepsilon\right)\binom{n}{2},$$ 
then $G$ must contain a copy of $T_k(kt)$.
\end{thm}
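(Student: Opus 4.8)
The standard route, which I would follow, is via Szemer\'{e}di's Regularity Lemma; at the end I note the regularity-free alternative. Write $K_k(t)$ for the complete $k$-partite graph with all parts of size $t$, so that $K_k(t) = T_k(kt)$ is exactly what we must produce inside $G$.

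First I would fix auxiliary constants: a density threshold $d := \varepsilon/4$, then a regularity parameter $\delta > 0$ chosen small compared with $d$ and with $t$ (the precise requirement, $\delta \le \delta_0(d,k,t)$, is forced only in the last step below), and finally a lower bound $M_0$ on the number of parts that is large compared with $1/\varepsilon$. Applying the Regularity Lemma to $G$ gives a $\delta$-regular equipartition $V(G) = V_0 \cup V_1 \cup \dots \cup V_m$ with $M_0 \le m \le M(\delta, M_0)$, $|V_0| \le \delta n$, and $|V_1| = \dots = |V_m| = N$. Let $R$ be the \emph{reduced graph} on $[m]$ in which $ij$ is an edge precisely when $(V_i, V_j)$ is $\delta$-regular of density at least $d$. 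The edges of $G$ not captured by $R$ are those meeting $V_0$, those inside a single part, those in irregular pairs, and those in regular pairs of density below $d$; a routine count bounds their number by $(3\delta + d + \tfrac1m)\binom n2 \le \tfrac\varepsilon2\binom n2$, so $e(G) \le |E(R)|\,N^2 + \tfrac\varepsilon2\binom n2$, which (using $N^2\,\binom m2 = (1+o(1))\binom n2$) rearranges to $|E(R)| \ge \bigl(1 - \tfrac1{k-1} + \tfrac\varepsilon4\bigr)\binom m2$ once $n$, hence $m$, is large enough.

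Since $\ex(m; K_k) = |E(T_{k-1}(m))| = \bigl(1 - \tfrac1{k-1} + o(1)\bigr)\binom m2$, the previous bound gives $|E(R)| > \ex(m;K_k)$, so $R$ contains a copy of $K_k$ by Tur\'{a}n's theorem; say on the $k$ parts $V_{i_1}, \dots, V_{i_k}$, which are therefore pairwise $\delta$-regular of density at least $d$. The final ingredient is the embedding lemma for regular tuples: if the $k$ parts of a $k$-partite graph are pairwise $\delta$-regular of density at least $d$ and have size $N$, and $\delta$ is small enough in terms of $d,k,t$ and $N$ is large enough, then the graph contains $K_k(t)$. One proves this by embedding the $kt$ vertices of $K_k(t)$ greedily, one part at a time: when a new vertex is to be placed in $V_{i_j}$, its image must lie in the intersection, with $V_{i_j}$, of the neighbourhoods of the (at most $(k-1)t$) vertices already embedded in the other relevant parts; each application of $\delta$-regularity (with the current candidate set, which stays above $\delta N$, playing the role of the test set) costs only a factor of about $d-\delta$, so the candidate set stays of size at least, say, $\tfrac12 (d-\delta)^{(k-1)t} N$, which exceeds $kt$ provided $\delta \le (d/2)^{(k-1)t}$ and $N$ is large. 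Choosing $\delta$ this small, and then $n$ large enough that $N \to \infty$, we obtain $K_k(t) = T_k(kt) \subseteq G$.

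The only genuinely delicate point — and the one place the value of $t$ enters — is the embedding lemma of the last step, which forces the dependence $\delta \ll (d/2)^{t}$ (roughly) of the regularity parameter on $t$; the rest is bookkeeping to ensure the various error terms never swallow the slack $\varepsilon$ (this is why one sets $d = \varepsilon/4$ and takes $m$, then $n$, sufficiently large, in that order). One can avoid the Regularity Lemma altogether by the original Erd\H{o}s--Stone induction on $k$: find a large set of vertices each having large degree into some fixed copy of $K_{k-1}(t')$ with $t' \gg t$, then extract a $K_k(t)$ from the common neighbourhoods by a K\H{o}v\'{a}ri--S\'{o}s--Tur\'{a}n / pigeonhole argument; this gives far better bounds on $n$ at the cost of a more intricate setup.
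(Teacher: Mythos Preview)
Your proof sketch is correct and follows the standard modern route via the Regularity Lemma (with the embedding lemma supplying the copy of $K_k(t)$ once the reduced graph contains a $K_k$); the alternative inductive argument you mention at the end is essentially the original 1946 proof.

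However, there is nothing to compare against: the paper does not prove this theorem at all. It is stated as a classical result with a citation to Erd\H{o}s and Stone \cite{erd-stone} and then applied as a black box in the proof of Lemma~\ref{regcycle}. So the ``paper's own proof'' here is simply the literature reference.
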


We now apply this theorem to complete the proof of Lemma~\ref{regcycle}.

\begin{proof}[Proof of Lemma~\ref{regcycle}]
Let the best $k$-partition of $G$, be $V_1,\ldots,V_k$. By Lemma~\ref{edgecount}, $e(G) > t_k(n) - O\left(n \log^2 n\right)$, and so Theorem \ref{stable} tells us that $G$ contains $t_k(n)(1-o(1))$ edges between its vertex classes $V_1,\ldots,V_k$. We therefore have $|V_i| = \tfrac{n}{k}(1+o(1))$ for each $i$. Also note that $G$ cannot be $k$-partite (else $c(G) < c(T_k(n))$ by Lemma \ref{Turanbest}). Therefore $G$ must contain an irregular edge. Now we count the cycles in $G$ which contain only regular edges. Note that if we define $G^R$ to be $G \backslash E_I$, where  $E_I$ is the set of irregular edges, then $G^R$ is $k$-partite; $G^R \subseteq K_{\underline{a}}$ for some $\underline{a} = (a_1,\ldots, a_k) \in \mathbb{N}^k$.

Let $t$ be such that $H  \subseteq T_k(tk) + e$, where $e$ is any edge inside a vertex class of $T_k(tk)$. Pick an irregular edge $uv$: without loss of generality we may assume $uv \in V_1$. We first show that $u$ and $v$ cannot have $\frac{n}{10k}$ common neighbours in every other vertex class. Suppose otherwise and form a set $Q$ by picking $\frac{n}{10k}$ vertices in $N(u)\cap N(v) \cap V_i$ for $i = 2, \ldots, k$ and picking $\frac{n}{10k}$ vertices in $V_1$ to be in $Q$.

The graph $G^R[Q]$ does not contain a copy of $T_k(tk)$: if it did, it would contain a copy of $T_k(tk)+e$ and hence a copy of $H$. So then applying Theorem \ref{erdstone}, there are $\Omega(n^2)$ regular edges that are not present in $G$, a contradiction. Thus, without loss of generality, $|N(u) \cap N(v) \cap V_2| < \frac{n}{10k}$ and, again without loss of generality, $|N(v) \cap V_2| \le \frac{5n}{8k}$ (since $|V_2| = \tfrac{n}{k}(1+o(1))$ and we may assume that $n$ is large). 

When $k \ge 3$, this means that $G$ cannot contain at least $\frac{3}{8}$ of the Hamilton cycles contained in $K_{\underline{a}}$ which start from $v$ and then go to vertex class $V_2$. Recall that $h_v(i,K_{\underline{a}})$ is the number of permutations of $V(K_{\underline{a}}) = \{v_1, \ldots, v_n\}$ such that $v_1 = v$, $v_2 \in V_i$ and $v_1 \cdots v_n$ is a Hamilton cycle. Since cycles may be counted at most twice due to orientation when considering permutations, the number of Hamilton cycles in $K_{\underline{a}}$ which start from $v$ and then go to vertex class $V_2$ is at least $\tfrac{1}{2}h_v(2,K_{\underline{a}})$. By applying \eqref{first1}, we get
 	\begin{align}
		c(G^R) &\le c(K_{\underline{a}}) - \frac{3}{8}\cdot \frac{1}{2}h_v(2,K_{\underline{a}}) \nonumber \\
		&= \sum_{r=3}^{n-1}c_r(K_{\underline{a}}) + \frac{1}{2}\sum_{i=3}^kh_v(i,K_{\underline{a}}) +  \left(\frac{1}{2} - \frac{3}{16}\right)h_v(2,K_{\underline{a}}). \nonumber
	\end{align}

Let $\underline{b} = (b_1,\ldots,b_n)$, be such that $b_i \ge b_j$ if and only if $a_i \ge a_j$, and that $K_{\underline{b}} \cong T_k(n).$ Recall that $a_i = \frac{n}{k}(1+o(1))$ and so $\prod_{i =1}^ke^{\left|\log\left(\frac{b_i}{a_i}\right)\right|} = (1+o(1))$. Therefore by applying Lemmas \ref{close} and \ref{Turancount} we get
	\begin{align}
		c(G^R) &\le \sum_{r=3}^{n-1}c_r(K_{\underline{a}}) + \prod_{i =1}^ke^{\left|\log\left(\frac{b_i}{a_i}\right)\right|}\left[\frac{1}{2}\sum_{i=3}^kh_v(i,T_k(n)) +  \left(\frac{1}{2} - \frac{3}{16}\right)h_v(2,T_k(n))\right] \nonumber \\
		&= \sum_{r=3}^{n-1}c_r(K_{\underline{a}}) + (1+o(1))\left(c_n(T_k(n)) - \frac{3}{16}h_v(2,T_k(n))\right) \nonumber \\
		&\le (1+o(1))\left(c(T_k(n)) - \frac{1}{8k}h(T_k(n))\right). \nonumber
	\end{align}
Finally, we can apply Lemma \ref{secondcount} to get
	\begin{align}
		c(G^R) &\le (1+o(1))\left(c(T_k(n)) - \frac{1}{24k}h(T_k(n)) - \frac{1}{12k}h(T_k(n))\right) \nonumber \\
		&\le (1+o(1))\left(c(T_k(n))\left(1-\frac{e^{-\frac{2k}{k-2}}}{24k}\right) - \frac{1}{12k}h(T_k(n))\right), \nonumber
	\end{align}
and so for $n$ sufficiently large, $c(G^R) \le c(T_k(n)) -\frac{1}{16k}h(T_k(n))$.
		
For $k=2$, first consider that if $|V_1|$ and $|V_2|$ differ in size by more than $1$, then $G^R$ contains no cycle of length $2\lfloor n/2 \rfloor$. Counting cycles by length and applying Lemma \ref{Turanbest} gives
	\begin{align}
		c(G^R) &= \sum_{r=2}^{\lfloor n/2 \rfloor -1} c_{2r}(G^R) \nonumber \\
		&\le \sum_{r=2}^{\lfloor n/2 \rfloor -1} c_{2r}(T_2(n)) \nonumber \\
		&= c(T_2(n)) - c_{2\lfloor n/2 \rfloor}(T_2(n)). \nonumber
	\end{align}

Therefore assume that $|V_1|$ and $|V_2|$ differ in size by at most $1$ (so $G^R$ is a subgraph of $T_2(n)$). Recall (from the third paragraph of this proof) that $G^R$ contains a vertex $v$ with degree at most $5n/16$. Therefore, when applying the argument for $k\ge 3$, we lose at least a quarter of the cycles of length $2\lfloor n/2 \rfloor$ which contain $v$ from $T_2(n)$. Note that $v$ is present in at least half of the cycles of length $2\lfloor n/2 \rfloor$ in $T_2(n)$ and so $c(G^R) \le c(T_2(n)) - \frac{1}{8}c_{2\left\lfloor \frac{n}{2} \right\rfloor}(T_k(n))$.
\end{proof}

\section{Counting Cycles in Complete multi-partite Graphs}\label{sectech}
In this section we present the proofs for the lemmas concerning counting cycles in complete multi-partite graphs that we stated in Section \ref{seckpart}. We start with some preliminary lemmas. In order to state these we require some technical definitions.

 Define a \emph{code} on an alphabet $\cA$ to be a string of letters $a_1\cdots a_n$ where each $a_i$ is in $\cA$. For $k \ge 3$, we now discuss a way to count the number of Hamilton cycles in a $k$-partite graph $G$. Suppose each vertex class $V_i$ of $G$ is ordered. Consider a code $a_1 \cdots a_n$, where each $a_i \in [k]$. From such a code, we attempt to construct a Hamilton cycle $v_1\cdots v_n$ in $G$ as follows: for $j=1,\ldots,n$ let $p(j) := \left|\left\{\ell \le j: a_{\ell} = a_j\right\}\right|$. Define $v_j$ to be the $p(j)$-th vertex in $V_{a_j}$. For $v_1\cdots v_n$ to be a Hamilton cycle, each letter must appear in the code $a_1\cdots a_n$ the correct number of times ($\left|\left\{j : a_j = i\right\}\right| = \left|V_i\right|$, for each $i \in [k]$) and any two consecutive letters of the code must be distinct ($a_j \neq a_{j+1}$ for each $j \in [n-1]$, and $a_1 \neq a_n$). 
 
 For a code $a_1 \cdots a_n$, with each $a_i \in [k]$, we say that the code is in $Q$ if $a_i \neq a_{i+1}$ for each $i$, where indices are taken modulo $n$ (so each pair of consecutive letters are distinct). For $\underline{c} = (c_1,\ldots,c_k) \in \bN^k$, we say that the code is in $P_{\underline{c}}$ if there are $c_i$ copies of $i$, for each $i \in [k]$. Finally we say that a code is in $P_{n,k}$ if it is in $P_{\underline{d}}$, where $\underline{d} = (d_1,\ldots,d_k) \in \bN^k$ is such that $d_1 \le d_2 \le \ldots \le d_k \le d_1 +1$ and $\sum_i d_i = n$.
 
 In what follows it will be useful to consider a random code, so let $C_{n,k}$ denote the random code $C_{n,k} = a_1 \cdots a_n$, where each $a_i$ is independently and uniformly distributed on $[k]$.

Enumerate the vertex set $V(K_{\underline{c}}) = \{v_1,\ldots,v_n\}.$ We can count the number of Hamilton cycles in $K_{\underline{c}}$ by considering the probability that a permutation $\sigma$ of $[n]$ picked uniformly gives a Hamilton cycle $v_{\pi(1)}\ldots v_{\pi(n)}.$ Since we have a choice of orientation and starting vertex, each Hamilton cycle will be counted $2n$ times, and so
	\begin{align}
		h(K_{\underline{c}}) = \frac{n!}{2n}\bP\left[v_{\pi(1)}\ldots v_{\pi(n)}\mbox{ is a Hamilton cycle}\right]. \label{hamattempt2}
	\end{align}

For $i \in [n],$ define $b_i \in [k]$ such that $v_{\pi(i)} \in V_{b_i}.$ Then $b_1\cdots b_n$ has the same distribution as $C_{n,k}$ conditioned on the event $\left\{C_{n,k} \in P_{\underline{c}}\right\}.$ Note further that $v_{\pi(1)}\ldots v_{\pi(n)}$ is a Hamilton cycle if and only if $b_1\cdots b_n \in Q.$ Putting these into \eqref{hamattempt2} gives
	\begin{align}
		h(K_{\underline{c}}) &=\frac{n!}{2n}\bP[b_1\cdots b_n \in Q] \nonumber \\
		&= \frac{n!}{2n}\bP[C_{n,k} \in Q | C_{n,k} \in P_{\underline{c}}]. \label{ham4}
	\end{align}

Obtaining good bounds on the probability that a random code is in $Q$ (and similarly in $P_{\underline{c}}$) is relatively easy but approximating the probability of the intersection of the events proves more tricky. The following lemma will help us bound \eqref{ham4} from below, in order to prove Lemma~\ref{kKMain}.

\begin{lem}\label{major}
Let $k \ge 2$ and suppose $C_{n,k} = a_1\cdots a_n$ where the $a_i$ are independent and identically uniformly distributed on $[k]$.  If $\underline{c} = (c_1,\ldots,c_k) \in \bN^k$ is such that $\sum_{i} c_i = n$, then
	\begin{align}
		\bP[C_{n,k} \in Q | C_{n,k}  \in P_{n,k}] \ge \bP[C_{n,k} \in Q |  C_{n,k}  \in P_{\underline{c}}],  \nonumber
	\end{align}
and in particular,
	\begin{align}
		\bP[C_{n,k} \in Q | C_{n,k}  \in P_{n,k}] \ge \bP[C_{n,k} \in Q]. \nonumber
	\end{align}
\end{lem}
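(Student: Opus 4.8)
The plan is to prove the stronger statement that $\bP[C_{n,k}\in Q\mid C_{n,k}\in P_{\underline c}]$ is maximised, over all $\underline c\in\bN^k$ with $\sum_i c_i=n$, by the balanced composition $\underline d$ defining $P_{n,k}$; the second claim then follows since $\bP[C_{n,k}\in Q]$ is a weighted average of the conditional probabilities $\bP[C_{n,k}\in Q\mid C_{n,k}\in P_{\underline c}]$ over all compositions $\underline c$. The natural strategy is a swapping / majorisation argument: suppose $\underline c$ is not balanced, so there are indices $p,q$ with $c_p\ge c_q+2$. Let $\underline c'$ be obtained from $\underline c$ by decreasing $c_p$ by one and increasing $c_q$ by one. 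I would like to show $\bP[C_{n,k}\in Q\mid C_{n,k}\in P_{\underline c}]\le\bP[C_{n,k}\in Q\mid C_{n,k}\in P_{\underline c'}]$; iterating such moves transforms any $\underline c$ into the balanced $\underline d$ (each move strictly decreases $\sum_i c_i^2$, so the process terminates), giving the result.

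To compare the two conditional probabilities, note that conditioning $C_{n,k}$ on $P_{\underline c}$ is the same as taking a uniformly random arrangement of a multiset with $c_i$ letters equal to $i$, and $\bP[C_{n,k}\in Q\mid C_{n,k}\in P_{\underline c}]$ equals $N(\underline c)/\binom{n}{c_1,\ldots,c_k}$, where $N(\underline c)$ is the number of cyclic-gap-free arrangements (no two cyclically adjacent letters equal). So the inequality I want is equivalent to
\[
\frac{N(\underline c)}{\binom{n}{c_1,\ldots,c_k}}\le\frac{N(\underline c')}{\binom{n}{c'_1,\ldots,c'_k}}.
\]
I would set up an explicit injection (or a weight-preserving bijection up to the binomial factors) between arrangements: given a valid arrangement for $\underline c$, pick (say) the first occurrence of letter $p$ that is not cyclically adjacent to an occurrence of $q$ and recolour it to $q$; one must check this lands in a valid arrangement for $\underline c'$ and that the multiplicities of preimages are controlled by exactly the ratio of multinomial coefficients. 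An alternative, perhaps cleaner, route is the transfer-matrix / cluster-expansion formula for $N(\underline c)$: the number of circular arrangements of a multiset with no two equal neighbours has a known expression (via inclusion–exclusion on blocks of equal letters, i.e.\ a product of Laguerre-type polynomials or a permanent), from which log-concavity in the vector $\underline c$ — hence the claimed monotonicity under balancing moves — can be read off directly.

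The main obstacle will be making the swapping argument rigorous: the naive "recolour one occurrence of $p$ to $q$" map is not a bijection (it is many-to-one and one-to-many in a way that depends on the local structure around the recoloured position), so one has to be careful that the multiplicities work out to exactly compensate the change in the multinomial normalisation. I expect the argument to go through either by choosing a canonical occurrence to recolour (e.g.\ using a fixed linear order on positions after cutting the cycle at a canonical place) and carefully counting fibres, or by instead comparing $N(\underline c)\cdot\binom{n}{c'_1,\ldots,c'_k}$ with $N(\underline c')\cdot\binom{n}{c_1,\ldots,c_k}$ combinatorially as counts of pairs (arrangement, auxiliary data). If the direct combinatorics proves too delicate, I would fall back on the explicit inclusion–exclusion formula for $N(\underline c)$ and verify the required inequality by a short computation exploiting that each factor is a polynomial in a single $c_i$ with the right concavity. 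The second, unconditional inequality needs no extra work beyond the averaging remark above.
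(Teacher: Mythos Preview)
Your overall plan---a majorisation argument showing that a single balancing move $c_p\mapsto c_p-1$, $c_q\mapsto c_q+1$ (with $c_p\ge c_q+2$) can only increase the conditional probability, then averaging to get the second statement---is exactly the structure of the paper's proof. The second inequality via averaging is correct and matches the paper verbatim.

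Where you have a genuine gap is the core step: you never actually prove the single-move inequality, and the two routes you sketch are both left as hopes. The recolouring injection you describe is, as you suspect, delicate: recolouring a $p$ to a $q$ can create a new $qq$-adjacency, and controlling the fibre sizes so that they exactly cancel the ratio of multinomials is not at all obvious. The ``explicit formula plus log-concavity'' fallback is also not carried out, and the known inclusion--exclusion expressions for circular Smirnov words are sums with alternating signs, so extracting the needed monotonicity in one coordinate is not a one-line computation.

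The paper's device for this step is different from either of your suggestions and is worth knowing. Fix the two letters $i,j$ being balanced and condition on the set $A\subseteq[n]$ of positions carrying letters \emph{other} than $i$ or $j$, together with the values of those letters and the event that no two cyclically adjacent letters in $A$ are equal. Crucially, the conditional probability of this event given $P_{\underline c}$ depends only on $c_i+c_j$, so it is the same for $\underline c$ and $\underline c'$. Given this conditioning, the complement of $A$ breaks the cycle into segments that must be filled with alternating $i$'s and $j$'s; the conditional probability of $Q$ then has the closed form
\[
\frac{2^{s_{\mathrm{even}}}\binom{s_{\mathrm{odd}}}{t}}{\binom{c_i+c_j}{c_i}},\qquad t=\tfrac{s_{\mathrm{odd}}+c_i-c_j}{2},
\]
where $s_{\mathrm{odd}},s_{\mathrm{even}}$ are the numbers of odd- and even-length segments. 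Comparing this with the corresponding expression for $\underline c'$ is a two-line algebraic check. This conditioning-on-the-other-letters trick is the missing idea in your proposal; it sidesteps the injection entirely and reduces the problem to a clean two-letter computation.
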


\begin{proof}
Let $k \ge 2$ and suppose $\underline{c} = (c_1,\ldots,c_k) \in \bN^k$ is such that $\sum_{i} c_i = n$. Suppose that there exist some $i$ and $j$ such that $c_i \le c_j -2$, and let $\underline{c}'=(c'_1,\ldots,c'_k)$ be such that $c'_i = c_i+1, c'_j = c_j-1$ and $c'_t=c_t$ for $t \neq i,j$. It is sufficient to show that $\bP[C_{n,k} \in Q | C_{n,k}  \in P_{\underline{c}'}] \ge \bP[C_{n,k} \in Q | C_{n,k}  \in P_{\underline{c}}]$ -- we may inductively find an $i$ and $j$ until the $c_a$ differ by at most one and $\underline{c}$ corresponds to the vertex class sizes of a Tur\'{a}n graph.

Fix a subset $A$ of $[n]$ with $|A| = n - (c_i+c_j)$ and let $R_{A,\underline{c}}$ be the event that $C_{n,k}$ is in $P_{\underline{c}}$, that $A = \{\ell : a_{\ell} \neq i,j \}$, and that $a_{\ell} \neq a_{\ell+1}$ for all $\ell$ in $A$ and $a_n \neq a_1$ if both $n$ and $1$ are in $A$. $R_{A,\underline{c}}$ can be thought of as the event that everything in the code except the letters with values $i$ and $j$ behave well. Now note that we can partition over all the sets of size $n - (c_i+c_j)$ in $[n]$, and get the expression
	\begin{align}
		\bP[C_{n,k} \in Q | C_{n,k} \in P_{\underline{c}}] = \sum_{A \in \binom{[n]}{n - (c_i+c_j)}} \bP[C_{n,k} \in Q | R_{A,\underline{c}}] \cdot \bP[R_{A,\underline{c}} | C_{n,k} \in P_{\underline{c}}]. \nonumber
	\end{align}

Note that given $P_{\underline{c}}$ holds, we may as well identify $i$ and $j$ when considering whether $R_{A,\underline{c}}$ holds. As such, $\bP[R_{A,\underline{c}} | C_{n,k} \in P_{\underline{c}}]$ is constant with respect to $c_i$ and $c_j$ with fixed $c_i + c_j$. This in turn, means that $\bP[R_{A,\underline{c}} | C_{n,k} \in P_{\underline{c}}] = \bP[R_{A,\underline{c}'} | C_{n,k} \in P_{\underline{c}'}]$ and so to prove the first statement of the lemma, it is sufficient to show that
	\begin{align}
		\bP[C_{n,k} \in Q | R_{A,\underline{c}}] \le \bP[C_{n,k} \in Q | R_{A,\underline{c}'}], \label{codecount3}
	\end{align}
for each $A \subseteq [n]$, with $|A| = n - (c_i+c_j)$.

Let $A \subseteq [n]$, with $|A| = n - (c_i+c_j)$ and condition on the event $R_{A,\underline{c}}$ (note that we may assume that this event is not null else we have nothing to prove). If we consider $C_{n,k}$ as a code that is a cycle (imagine joining $a_1$ to $a_n$), then the occurrences of $i,j$ form a collection of segments of total length $c_i + c_j$ with $c_i$ copies of $i$ and $c_j$ copies of $j$. Conditioning just on $R_{A,\underline{c}}$, we have choice over where we place the $i$ and $j$ letters in the segments. Since we must have $c_i$ total copies of $i$ in the segments, there are $c_i+c_j \choose c_i$ such choices of placement of the $i$ and $j$ letters. Conditional on $R_{A,\underline{c}}$, the $i$ and $j$ placements are uniformly distributed on these $c_i+c_j \choose c_i$ choices. Conditional on $R_{A, \underline{c}}$, for the code $C_{n,k}$ to be in $Q$, the segments all have to be a string of letters alternating between $i$ and $j$. As such the first letter of a segment dictates the remainder of that segment.

Let the lengths of the $\{i,j\}$-segments of $C_{n,k}$ be $r_1,\ldots,r_m$ and let $s_{\rm odd \rm}$ and $s_{\rm even \rm}$ be the number of odd length $\{i,j\}$-segments and even length $\{i,j\}$-segments respectively. We are then able to compute $\bP[C_{n,k} \in Q | R_{A,\underline{c}}]$ by considering the starting letter of each $\{i,j\}$-segment. Suppose that $t$ of the $s_{\rm odd \rm}$ $\{i,j\}$-segments with odd length start with $i$. Then in the code, there will be $s_{\rm odd \rm}-2t$ more appearances of $j$, than of $i$. Therefore, since $C_{n,k} \in P_{\underline{c}}$, we must have $2t - s_{\rm odd \rm} = c_i-c_j$ and so $t = \tfrac{s_{\rm odd \rm}+c_i-c_j}{2}$. Note that if $s_{\rm odd \rm}+c_i-c_j$ is odd, then $\bP[C_{n,k} \in Q | R_{A,\underline{c}}] = 0$ since $t$ must be an integer (and so we have nothing to prove). Therefore we assume that $s_{\rm odd \rm}+c_i-c_j$ is even in what follows.

We can specify such a code by choosing the starting letter of each even interval arbitrarily and choosing exactly $t$ odd intervals to start with $i$. Comparing this with all possible choices of placements of $i$ and $j$ letters, we obtain
	\begin{align}
		\bP[C_{n,k} \in Q | R_{A,\underline{c}}] &= \frac{2^{s_{\rm even \rm}}{s_{\rm odd \rm} \choose t}}{{c_i+c_j \choose c_i}}  \label{codecount1}, \\
		\bP[C_{n,k} \in Q | R_{A,\underline{c}'}] &= \frac{2^{s_{\rm even \rm}}{s_{\rm odd \rm} \choose t+1}}{{c'_i+c'_j \choose c'_i}} \nonumber \\
		&= \frac{2^{s_{\rm even \rm}}{s_{\rm odd \rm} \choose t+1}}{{c_i+c_j \choose c_i+1}} \label{codecountz}.
	\end{align}

Writing $b = c_j-c_i$ and dividing \eqref{codecount1} by \eqref{codecountz}, we get

	\begin{align}
		\frac{\bP[C_{n,k} \in Q | R_{A,\underline{c}}]}{\bP[C_{n,k} \in Q | R_{A,\underline{c}'}]} &= \frac{c_j(s_{\rm odd \rm}+c_i-c_j+2)}{(c_i+1)(s_{\rm odd \rm}+c_j-c_i)} \nonumber \\
		&= \frac{(c_i + b)(s_{\rm odd \rm} - b+2)}{(c_i+1)(s_{\rm odd \rm} + b)} \nonumber \\
		&= \frac{c_is_{\rm odd \rm}+2c_i-bc_i+bs_{\rm odd \rm}+2b-b^2}{c_is_{\rm odd \rm}+bc_i+b+s_{\rm odd \rm}} \nonumber \\
		&= 1 -(b-1)\frac{2c_i+b-s_{\rm odd \rm}}{c_is_{\rm odd \rm}+bc_i+b+s_{\rm odd \rm}}. \label{codecount2}
	\end{align}
Since there can be at most $c_i+c_j = 2c_i+b$ odd length $\{i,j\}$-segments, we have $2c_i + b \ge s_{\rm odd \rm}$, and $b \ge 2$. The right hand side of \eqref{codecount2} must be less than or equal to $1$ and so
	\begin{align}
		\bP[C_{n,k} \in Q | R_{A,\underline{c}}] \le \bP[C_{n,k} \in Q | R_{A,\underline{c}'}], \nonumber
	\end{align}
as required for \eqref{codecount3}. This completes the proof of the first statement of the lemma. For the second statement we partition $\mathbb{P}[C_{n,k} \in Q]$ over the $P_{\underline{c}}$ to give
\begin{align*}
\bP[C_{n,k} \in Q] &= \sum_{\underline{c}}\bP[C_{n,k} \in Q\cap P_{\underline{c}}]\\
&= \sum_{\underline{c}}\bP[C_{n,k} \in Q|C_{n,k} \in P_{\underline{c}}]\bP[C_{n,k} \in P_{\underline{c}}]\\
&\le \sum_{\underline{c}}\bP[C_{n,k} \in Q|C_{n,k} \in P_{n,k}]\bP[C_{n,k} \in P_{\underline{c}}]\\
&= \bP[C_{n,k} \in Q|C_{n,k} \in P_{n,k}],
\end{align*}
as required.
\end{proof}

We now use Lemma \ref{major} to bound from below the number of Hamilton cycles in $T_k(n)$ and in turn prove Lemma \ref{kKMain}.
\begin{proof}[Proof of Lemma \ref{kKMain}]
Let $k \ge 3$ and suppose $\underline{c} = (c_1,\ldots,c_k) \in \bN^k$ is such that $\sum_{i} c_i = n$. Recalling \eqref{ham4}, we note that if $c_i \le c_j -2$ and we let $\underline{c}'=(c'_1,\ldots,c'_k)$ be such that $c'_i = c_i+1, c'_j = c_j-1$ and $c'_t=c_t$ otherwise, then applying Lemma \ref{major} gives
	\begin{align}
		h(K_{\underline{c}'}) \ge h(K_{\underline{c}}). \label{ham1}
	\end{align}
Furthermore, we have
	\begin{align}
		h(T_k(n)) &= \frac{n!}{2n}\bP[C_{n,k} \in Q | C_{n,k} \in P_{n,k}] \nonumber \\
		&\ge \frac{n!}{2n}\bP[C_{n,k} \in Q] \nonumber \\
		&= \frac{n!}{2n} \bP[a_n \neq a_1,a_{n-1} | a_{n-1} \neq  \cdots \neq a_1]\prod_{i=2}^{n-1}\bP[a_i \neq a_{i-1} | a_{i-1} \neq \cdots \neq a_1] \nonumber \\
		&\ge \frac{n!}{2n}\left(\frac{k-2}{k}\right)\left(\frac{k-1}{k}\right)^{n-2} \nonumber \\
		&= \Omega\left(n^{n-\frac{1}{2}}e^{-n}\left(\frac{k-1}{k}\right)^n\right), \nonumber
	\end{align}
as required.

For $k=2$ we apply a simple counting argument. The number of cycles of length $t =2 \left\lfloor \frac{n}{2} \right\rfloor$ in $T_2(n)$ this is easily counted by ordering both colour classes and accounting for starting vertex and orientation. Therefore we get
 	\begin{align}
		c_t(T_2(n)) = \frac{\left(\left\lfloor \frac{n}{2} \right\rfloor\right)_{\frac{t}{2}} \left(\left\lceil \frac{n}{2} \right\rceil\right)_{\frac{t}{2}}}{2t} = \frac{\left\lfloor \frac{n}{2} \right\rfloor! \left\lceil \frac{n}{2} \right\rceil!}{4\left\lfloor \frac{n}{2} \right\rfloor}, \nonumber
		\end{align}
and the result follows by applying Stirling's approximation. 
\end{proof}
We now use a counting argument to prove Lemma \ref{Turanbest}.
\begin{proof}[Proof of Lemma \ref{Turanbest}]
As before, let $\underline{c} = (c_1,\ldots,c_k) \in \bN^k$ be such that $\sum_{i} c_i = n$. If there exists $i$ and $j$ such that $c_i \le c_j -2$, and we let $\underline{c}'=(c'_1,\ldots, c'_k)$ be such that $c'_i = c_i+1, c'_j = c_j-1$ and $c'_{\ell}=c_{\ell}$ otherwise. We are going to show that $c_r(K_{\underline{c}'}) \ge c_r(K_{\underline{c}})$, for all $r$.

Without loss of generality, we may assume that $i=2$ and $j=1$. We can count the number of cycles of a given length, $r$, by choosing $r$ vertices and then counting the number of Hamilton cycles in graph induced by this cycle and then summing over all choices of $r$ vertices:
  \begin{align}
    c_r(K_{\underline{c}}) = \sum_{\substack{\underline{a} \in \prod_{i=1}^k\{0,\ldots,c_i\} : \\ \sum_{i=1}^k a_i = r}}\biggl[\biggl(\prod_{i=1}^k{c_i \choose a_i}\biggr) \cdot h\bigl(K_{\underline{a}}\bigr)\biggr]. \nonumber
  \end{align}
  
Fix a copy $K$ of $K_{\underline{c}}$ with vertex classes $V_1, \ldots, V_k$ and choose $v \in V_1$; then define $K'$ to be $K \setminus v$ with a vertex $v'$ added to $V_2$ which is a neighbour of all vertices not in $V_2$. We can see that $K'$ is a copy of $K_{\underline{c}'}$. Using this coupling to  compare $c_r(K_{\underline{c}})$ and $c_r(K_{\underline{c}'})$, we only need to consider cycles in $K$ containing $v$ and the cycles in $K'$ containing $v'$. We write $c_{r,v}(G)$ to be the number of cycles of length $r$ in $G$ containing vertex $v$. In what follows we denote the unit vector in direction $m$ by $\underline{e}_m = (y_1,\ldots,y_k)$, where $y_m = 1$ and $y_{\ell} = 0$ otherwise. Since we already assume that $v$ is in our cycle, we then choose $r-1$ other vertices and count the number of Hamilton cycles on the induced subgraph to express $c_{r,v}(K)$ as
	\begin{align}
   		&\sum_{\substack{\underline{a} \in \{0,\ldots,c_1-1\} \times \prod_{i=2}^k\{0,\ldots,c_i\} : \\ \sum_{i=1}^k a_i = r-1}}\biggl[{c_1-1 \choose a_1} \cdot \biggl(\prod_{i=2}^k{c_i \choose a_i}\biggr) \cdot h\bigl(K_{\underline{a}+\underline{e}_1}\bigl)\biggr] \nonumber \\
    		&= \sum_{\substack{a_1 \in \{0,\ldots,c_1-1\} \\ a_2 \in \{0,\ldots,c_2\}}}\biggl[{c_1-1 \choose a_1}{c_2 \choose a_2} \sum_{\substack{(a_3,\ldots,a_k) \in \prod_{i=3}^k\{0,\ldots,c_i\} : \\ \sum_{i=1}^k a_i = r-1}}\biggl[\biggl(\prod_{i=3}^k{c_i \choose a_i}\biggr) \cdot h\bigl(K_{\underline{a}+\underline{e}_1}\bigl)\biggr]\biggr] \nonumber
    	\end{align}
and similarly we may express $c_{r,v'}(K')$ as
	\begin{align}
    		&\sum_{\substack{a_1 \in \{0,\ldots,c_1-1\} \\ a_2 \in \{0,\ldots,c_2\}}}\biggl[{c_1-1 \choose a_1}{c_2 \choose a_2} \sum_{\substack{(a_3,\ldots,a_k) \in \prod_{i=3}^k\{0,\ldots,c_i\} : \\ \sum_{i=1}^k a_i = r-1}}\biggl[\biggl(\prod_{i=3}^k{c_i \choose a_i}\biggr) \cdot h\bigl(K_{\underline{a}+\underline{e}_2}\bigl)\biggr]\biggr] \nonumber \\
		&= \sum_{\substack{a_1 \in \{0,\ldots,c_1-1\} \\ a_2 \in \{0,\ldots,c_2\}}}\biggl[{c_1-1 \choose a_1}{c_2 \choose a_2} \sum_{\substack{(a_3,\ldots,a_k) \in \prod_{i=3}^k\{0,\ldots,c_i\} : \\ \sum_{i=1}^k a_i = r-1}}\biggl[\biggl(\prod_{i=3}^k{c_i \choose a_i}\biggr) \cdot h\bigl(K_{\underline{a}'+\underline{e}_1}\bigl)\biggr]\biggr], \nonumber
  	\end{align}
where $\underline{a}' = (a_2,a_1,a_3,a_4,\ldots,a_k)$ is the vector $\underline{a}$ with the first two values switched.

Define:
	\begin{align}
		\eta(a_1,a_2,\underline{c},r) := \sum_{\substack{(a_3,\ldots,a_n) \in \prod_{i=3}^k\{0,\ldots,c_i\} : \\ \sum_{i=1}^k a_i = r-1}}\biggl[\biggl(\prod_{i=3}^k{c_i \choose a_i}\biggr)h\bigl(K_{\underline{a}+\underline{e}_1}\bigl) \biggr]. \nonumber 
	\end{align}
Then
	\begin{align}
		c_{r,v}(K) = \sum_{\substack{a_1 \in \{0,\ldots,c_1-1\} \\ a_2 \in \{0,\ldots,c_2\}}}{c_1-1 \choose a_1}{c_2 \choose a_2} \eta(a_1,a_2,\underline{c},r) \label{length1}
	\end{align}
and
	\begin{align}
		c_{r,v'}(K') = \sum_{\substack{a_1 \in \{0,\ldots,c_1-1\} \\ a_2 \in \{0,\ldots,c_2\}}}{c_1-1 \choose a_1}{c_2 \choose a_2} \eta(a_2,a_1,\underline{c},r). \label{length2}
	\end{align}
	
If we subtract \eqref{length2} from \eqref{length1} and split the sums depending on the values of $a_1$ and $a_2$, we get
	\begin{align}
		c_{r,v'}(K') - c_{r,v}(K) &=  \sum_{0\le a_2 < a_1 \le c_2} {c_1-1 \choose a_1}{c_2 \choose a_2}\left(\eta(a_2,a_1,\underline{c},r)- \eta(a_1,a_2,\underline{c},r)\right)\nonumber \\
		&+ \sum_{0\le a_1 < a_2 \le c_2} {c_1-1 \choose a_1}{c_2 \choose a_2}\left(\eta(a_2,a_1,\underline{c},r)- \eta(a_1,a_2,\underline{c},r)\right)\nonumber \\
		&+ \sum_{0\le a_2\le c_2<a_1\le c_1-1} {c_1-1 \choose a_1}{c_2 \choose a_2}\left(\eta(a_2,a_1,\underline{c},r)- \eta(a_1,a_2,\underline{c},r)\right). \nonumber
	\end{align}

If we swap around the values of $a_1$ and $a_2$ in the second line of this expression, we get
	\begin{align}
		&c_{r,v'}(K') - c_{r,v}(K) \nonumber \\
		&= \sum_{0 \le a_2<a_1 \le c_2}\biggl({c_1-1 \choose a_1}{c_2 \choose a_2} - {c_1-1 \choose a_2}{c_2 \choose a_1}\biggr)\biggl(\eta(a_2,a_1,\underline{c},r) - \eta(a_1,a_2,\underline{c},r)\biggr) \nonumber \\
		&+ \sum_{\substack{a_1 \in \{c_2+1,\ldots,c_1-1\} \\ a_2 \in \{0,\ldots,c_2\}}}{c_1-1\choose a_1}{c_2\choose a_2}(\eta(a_2,a_1,\underline{c},r)-\eta(a_1,a_2,\underline{c},r)). \label{length4}
	\end{align}

From \eqref{ham1}, we obtain that if $x >y$, then we have $\eta(x,y,\underline{c},r) \le \eta(y,x,\underline{c},r)$. Thus in the first sum of \eqref{length4}, when $a_1 > a_2$, we have $\eta(a_2,a_1,\underline{c},r) - \eta(a_1,a_2,\underline{c},r) \ge 0.$ At the same time, note that since $c_1-1 > c_2$,
	\begin{align}
		{c_1-1 \choose x}{c_2 \choose y} - {c_1-1 \choose y}{c_2 \choose x} > 0 \nonumber
	\end{align}
if and only if $x > y$. Combining these, we must have that for all $0 \le a_2 < a_1 \le c_2$
	\begin{align}
		\biggl({c_1-1 \choose a_1}{c_2 \choose a_2} - {c_1-1 \choose a_2}{c_2 \choose a_1}\biggr)\biggl(\eta(a_2,a_1,\underline{c},r) - \eta(a_1,a_2,\underline{c},r)\biggr) \ge 0 \nonumber
	\end{align}
and so the first sum is positive.

In the second sum of \eqref{length4}, $a_1 > a_2$ and \eqref{ham1} tells us $\eta(a_2,a_1,\underline{c},r) - \eta(a_1,a_2,\underline{c},r) \ge 0.$ Thus the second sum is positive as well. We are then able to conclude that $c_{r,v'}(K') \ge c_{r,v}(K)$ as required.

All that remains is to prove that $c(T_k(n)) > c(G)$ for any $k$-partite graph $G$. Suppose that $G = K_{\underline{c}^0}$ where $\underline{c}^0 = (c^0_1,\ldots,c^0_k) \in \mathbb{N}^k$ is such that $\sum_{i=1}^k c^0_i =n$. While there exist some $i$ and $j$ such that $c^{\ell}_i \le c^{\ell}_j-2$, define $\underline{c}^{\ell+1} = (c^{\ell+1}_1,\ldots,c^{\ell+1}_k)$ by $c^{\ell+1}_i = c^{\ell}_i+1$, $c^{\ell+1}_j = c^{\ell+1}_j-1$ and $c^{\ell+1}_r = c^{\ell}_r$ otherwise. Suppose that this process terminates with $\underline{c}^I$, so $T_k(n) \simeq K_{\underline{c}^I}$. Note that by successive applications of \eqref{ham1}, we have $h(G) \le h(K_{\underline{c}^{I-1}})$. 

We will now show that $h(G) < h(K_{\underline{c}^{I-1}})$. In order to do this, we have to consider \eqref{codecount2} a bit more closely. If $h(G) = h(K_{\underline{c}^{I-1}})$, then at each application of \eqref{ham1}, we have equality. So let us suppose, in order to obtain a contradiction, that $h(K_{\underline{c}^{I-1}}) = h(K_{\underline{c}^{I}})$, for some $I$. In this case, we must have that $s_{\rm odd \rm} = c_i^{I-1}+c_j^{I-1}$ for all $A \in \binom{[n]}{n-(c_i^I+c^I_j)}$. 

Say that a code $a_1,\ldots,a_n$ has an \emph{ij transition} if there exists some $s$ such that $\{a_s,a_{s+1}\} = \{i,j\}$ where indices are taken modulo $n$. For a fixed $A,$ if $s_{\rm odd \rm} = c_i^{I-1}+c_j^{I-1}$ then there can be no $ij$ in any code in $Q$ conditional on $R_{A,\underline{c}^I}$. Therefore if $h(K_{\underline{c}^{I-1}}) = h(K_{\underline{c}^{I}})$ then there are no codes in $Q\cap P_{\underline{c}^I}$ with an $ij$ transition. However we will show that we can construct such a code with an $ij$ transition, and hence obtain our contradiction. We now split into two cases dependent on whether $c^I_i = c^I_j-1$ or $c^I_i = c^I_j$.

First suppose that the $c^I_i = c^I_j-1.$ Since $K_{\underline{c}^I}$ is balanced, all vertex classes are of size $c^I_i$ or $c^I_j.$ In any Hamilton cycle of $K_{\underline{c}^I}$, there must be a transition from a vertex class of size $c^I_i$ to a vertex class of size $c^I_j$ and so by symmetry there must be a Hamilton cycle with a $ij$ transition.

Now suppose that $c^I_i = c^I_j$. If all the vertex class sizes of $K_{\underline{c}^I}$ are the same, then we are done by symmetry. Similarly if the vertex class sizes of $K_{\underline{c}^I}$ are $c^I_i-1$ and $c^I_i,$ then there must be a transition between two classes of size $c^I_i$ and so we are done by symmetry. Finally it remains to consider when $c^I_i = c^I_j$ and the vertex class sizes of $K_{\underline{c}^I}$ are $c^I_i$ and $c^I+1.$ Consider a permutation $\pi = \pi_1\cdots \pi_k$ such that $\pi_{k-1} = i$, $\pi_{k} = j$ and $\{\pi_1,\ldots,\pi_r\} = \{l : c_l^I = c_i^I+1\}$. If $r=1$ and $k=3$, then $c_i^I \ge 2$ (else there are only four vertices) and so the code $\pi_1\pi_2\pi_1\pi_3(\pi_1\pi_2\pi_3)\cdots(\pi_1\pi_2\pi_3)$ is sufficient. If $r=1$ and $k\ge 4$, then the code $\pi_1\pi_2\pi_1\pi_3\pi_4 \cdots \pi_k(\pi_1\cdots \pi_k)\cdots(\pi_1\cdots \pi_k)$ is sufficient. Finally, if $r \ge 2$, then the code $\pi_1 \cdots \pi_r (\pi_1\cdots \pi_k) \cdots (\pi_1\cdots \pi_k)$ is sufficient.

We have shown that there must be an instance of a strict inequality at \eqref{ham1} in the comparison of $h(K_{\underline{c}^{I-1}})$ with $h(K_{\underline{c}^{I}})$. It then follows immediately that $c(T_k(n)) = c(K_{\underline{c}^{I}}) > c(K_{\underline{c}^{I-1}}) \ge c(G)$.
\end{proof}

The proof of Lemma \ref{close} has a similar flavour to that of Lemma \ref{major}. We first prove a preliminary lemma where we evaluate $h_v(2,K_{\underline{c}})$ by considering random codes and then compare $h_v(2,K_{\underline{c}})$ with $h_v(2,K_{\underline{c}'})$. Lemma \ref{close} will follow directly from this next lemma. (For what follows we define $R_{A,\underline{b}}$ as in the proof of Lemma \ref{major}.)

\begin{lem}\label{stepcount}
For $k \ge 3$, suppose $\underline{c}=(c_1,\ldots,c_k) \in \bN^k$ is such that $\sum_{i} c_i = n$ with $0 \neq c_i \le c_j -2$. Let $\underline{c}'=(c'_1,\ldots,c'_k)$ be such that $c'_i = c_i+1, c'_j = c_j-1$ and $c'_{\ell}=c_{\ell}$ otherwise. Suppose $V_1,\ldots, V_k$ and $V_1',\ldots,V_k'$ are the vertex classes of $K_{\underline{c}}$ and $K_{\underline{c}'}$ and pick some $v \in V_1, v' \in V_1'$. Then
	\begin{align}
		h_v(2,K_{\underline{c}}) \le \frac{(c_i+1)c_j}{c_i(c_j-1)}h_{v'}(2,K_{\underline{c}'}). \nonumber
	\end{align}
\end{lem}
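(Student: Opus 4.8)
The plan is to follow the code-counting approach behind Lemma~\ref{major}. Fix an enumeration $V(K_{\underline{c}})=\{v_1,\ldots,v_n\}$ with $v_1=v\in V_1$. A permutation counted by $h_v(2,K_{\underline{c}})$ is determined by its sequence of vertex-classes $a_1\cdots a_n$ — a code over $[k]$ lying in $Q$, with $c_\ell$ copies of $\ell$ for each $\ell$, and with $a_1=1$, $a_2=2$ — together with the order in which each class is visited; as slot $1$ is forced to be $v$ this gives
$$h_v(2,K_{\underline{c}})=(c_1-1)!\Bigl(\prod_{\ell=2}^{k}c_\ell!\Bigr)N_{\underline{c}},$$
where $N_{\underline{c}}$ is the number of such codes, and similarly for $K_{\underline{c}'}$ with each $c_\ell$ replaced by $c'_\ell$. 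So it is enough to bound $N_{\underline{c}'}/N_{\underline{c}}$ from below and multiply by the (explicit) ratio of prefactors.

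To compare $N_{\underline{c}}$ and $N_{\underline{c}'}$ I would partition the codes by the set $A:=\{\ell\in[n]:a_\ell\notin\{i,j\}\}$; as $K_{\underline{c}}$ is $k$-partite with $k\ge 3$, some class other than those indexed $i,j$ is nonempty, so $A\ne\varnothing$ and $[n]\setminus A$ is a union of cyclic segments of total length $c_i+c_j$. For a fixed admissible $A$, the restriction of a code to $A$ is a valid arrangement of $[k]\setminus\{i,j\}$ with the prescribed multiplicities (respecting $a_1=1$ and/or $a_2=2$ where these lie in $A$); the number $\alpha(A)$ of such arrangements depends only on $A$ and on $(c_\ell)_{\ell\notin\{i,j\}}$, so it is the same for $\underline{c}$ and $\underline{c}'$. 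On $[n]\setminus A$, membership in $Q$ forces each segment to alternate between $i$ and $j$, so the only remaining freedom is the choice of first letter of each segment subject to the total number of $i$'s being $c_i$; exactly as in \eqref{codecount1} this equals $2^{E(A)}\binom{O(A)}{t_{\underline{c}}(A)}$, where $E(A),O(A)$ count the even and odd segments not pinned by positions $1,2$, and $t_{\underline{c}}(A)=\tfrac12\bigl(O(A)+c_i-c_j-\varepsilon(A)\bigr)$ for an offset $\varepsilon(A)\in\{-1,0,1\}$ coming from the (at most one) pinned segment. The key observation is that $t_{\underline{c}'}(A)=t_{\underline{c}}(A)+1$ for every $A$: raising $c_i$ and lowering $c_j$ by one forces exactly one more odd segment to start with $i$. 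Hence
$$\frac{N_{\underline{c}'}}{N_{\underline{c}}}\ \ge\ \min_{A}\frac{\binom{O(A)}{t_{\underline{c}}(A)+1}}{\binom{O(A)}{t_{\underline{c}}(A)}}\ =\ \min_{A}\frac{O(A)-t_{\underline{c}}(A)}{t_{\underline{c}}(A)+1},$$
the minimum over those $A$ whose $\underline{c}$-term is nonzero (those $A$ with a vanishing $\underline{c}$-term but nonzero $\underline{c}'$-term only increase $N_{\underline{c}'}$). Since $t_{\underline{c}}(A)\ge0$, since $c_j-c_i\ge2$ forces $t_{\underline{c}}(A)<O(A)$, and since $O(A)\le c_i+c_j$ — and $\le c_i+c_j-1$, respectively $c_i+c_j-2$, when one, respectively both, of positions $1,2$ lie outside $A$ — a short elementary estimate (the reciprocal of the computation behind \eqref{codecount2}) shows that this minimum is at least $\tfrac{c_j}{c_i+1}$ when no position is pinned, and at least $\tfrac{c_j-1}{c_i+1}$ when a position is pinned.

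Finally I would compute the prefactor ratio $\bigl[(c'_1-1)!\prod_{\ell\ge2}c'_\ell!\bigr]\big/\bigl[(c_1-1)!\prod_{\ell\ge2}c_\ell!\bigr]$, which equals $\tfrac{c_i+1}{c_j}$ if $1\notin\{i,j\}$, equals $\tfrac{c_i+1}{c_j-1}$ if $j=1$, and equals $\tfrac{c_i}{c_j}$ if $i=1$. A position of $\{1,2\}$ is pinned precisely when it lies in $\{i,j\}$, so in the first case no position is pinned while in the last two at least one is; multiplying the prefactor ratio by the matching lower bound on $N_{\underline{c}'}/N_{\underline{c}}$ yields $h_{v'}(2,K_{\underline{c}'})/h_v(2,K_{\underline{c}})\ge\tfrac{c_i(c_j-1)}{(c_i+1)c_j}$ in every case (indeed $\ge1$ unless $i=1$), which rearranges to the claim. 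I expect the main obstacle to be the bookkeeping in the middle step: tracking how a letter forced at position $1$ or $2$ pins one segment and shifts $t_{\underline{c}}(A)$, and verifying the elementary extremal inequality for $\tfrac{O-t}{t+1}$ in each of the (few) cases for $\{i,j\}\cap\{1,2\}$. The core inequality is routine, but the case analysis is where care is needed; the degenerate possibility of an empty class is exactly what the hypotheses $k\ge3$ and $0\ne c_i$ rule out.
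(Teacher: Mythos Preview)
Your approach is correct and is essentially the paper's: both partition codes by the set $A$ of non-$\{i,j\}$ positions, reduce to the binomial ratio $\binom{O}{t+1}/\binom{O}{t}=(O-t)/(t+1)$, and bound it via $s_{\mathrm{odd}}\le c_i+c_j$; the only cosmetic difference is that you work directly with code counts $N_{\underline{c}}$ while the paper phrases everything through the conditional probabilities $\bP[C_{n,k}\in Q,\ (a_1,a_2)=(1,2)\mid R_{A,\underline{c}}]$, absorbing the normaliser $\binom{c_i+c_j}{c_i}$ into the ratio. One small slip in your case analysis: when $1\notin\{i,j\}$ but $2\in\{i,j\}$, position $2$ \emph{is} pinned, so ``in the first case no position is pinned'' (and hence the parenthetical ``$\ge 1$ unless $i=1$'') is not quite right --- but the weaker pinned-case bound $N_{\underline{c}'}/N_{\underline{c}}\ge (c_j-1)/(c_i+1)$ still gives a product of $(c_j-1)/c_j\ge c_i(c_j-1)/((c_i+1)c_j)$ there, so the conclusion is unaffected.
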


\begin{proof}
Recall that $h_v(2,K_{\underline{c}})$ counts orderings $v_1,\ldots,v_n$ of $V(K_{\underline{c}})$ where $v_1 = v$, $v_2 \in V_2$, and $v_1\cdots v_n$ is a Hamilton cycle. There is a bijection between such an ordering and the pair $(C,(\pi_i)_{i\in [k]})$ where: $C$ is a code $a_1\cdots a_n$ on $[k]$ with $a_1 = 1$, $a_2 = 2$ that is in both $Q$ and $P_{\underline{c}}$; and $\pi_i$ is an ordering of $V_i$ for each $i$ and $v$ is the first vertex in $\pi_1$. So if we let $C_{n,k} = a_1\cdots a_n$ be a random code where each $a_i$ is independently and identically uniformly distributed on $[k]$, we have an expression for $h_v(2,K_{\underline{c}}$):
	\begin{align}
		h_v(2,K_{\underline{c}}) = k^n(c_1-1)!\biggl(\prod_{l=2}^k(c_l!)\biggr)\bP[C_{n,k} \in Q \cap P_{\underline{c}}, (a_1,a_2) = (1,2)]. \nonumber
	\end{align}
By considering the multinomial distribution with parameters $n$ and $\left(\tfrac{1}{k},\ldots,\tfrac{1}{k}\right)$ we have
	\begin{align}
		\bP\left[C_{n,k} \in P_{\underline{c}}\right] = \frac{n!}{\prod_{i=1}^k\left(c_i!\right)}k^{-n}, \label{multinomial1}
	\end{align}
and so
	\begin{align}
		h_v(2,K_{\underline{c}}) &= \frac{n!}{c_1}\bP[C_{n,k} \in Q, (a_1,a_2) = (1,2) | C_{n,k} \in P_{\underline{c}}] \nonumber \\
		&= \frac{n!}{c_1}\sum_A\bP[C_{n,k} \in Q, (a_1,a_2) =(1,2) | R_{A,\underline{c}}] \bP[R_{A,\underline{c}} | C_{n,k} \in P_{\underline{c}}]\label{first2}
	\end{align}
where $R_{A,\underline{c}}$ is defined as in the proof of Lemma \ref{major}, and the sum is taken over all $A \in \binom{[n]}{n-(c_i+c_j)}$.

For what follows, we only consider $A \in \binom{[n]}{n-(c_i+c_j)}$ such that $R_{A,\underline{c}} \cap \{(a_1,a_2)=(1,2)\} \neq \emptyset$ as these are the only ones that contribute to \eqref{first2} when considering either $\underline{c}$ and $\underline{c}'$. As in the proof of Lemma \ref{major}, conditioning on $R_{A,\underline{c}}$, let $s_{\rm odd \rm}$ and $s_{\rm even \rm}$ be the number of $\{i,j\}$ subcodes with respectively odd and even lengths, where we consider the code cyclically. Unlike before, we now require $(a_1,a_2) = (1,2)$ and so if one of $i$ and $j$ is $1$ or $2$, one of the subcodes will have a fixed value at $a_1$ and so a fixed starting letter. Let $\chi_{\rm even \rm}$ be the indicator that there is an even length subcode with a fixed first letter. Similarly let $\chi_{\rm odd \rm}$ be the indicator that there is an odd length subcode with a fixed first letter and further let $\chi_{\rm odd \rm}(i)$ and $\chi_{\rm odd \rm}(j)$ be the indicator that there is an odd length subcode with the first letter having fixed value $i$ and $j$ respectively.

As in Lemma \ref{major}, by letting $t = \frac{s_{\rm odd \rm}+c_i-c_j}{2}$ we can now compute $\bP[C_{n,k} \in Q, (a_1,a_2)=(1,2) | R_{A,\underline{c}}]$:
	\begin{align}
		\bP[C_{n,k} \in Q, (a_1,a_2)=(1,2) | R_{A,\underline{c}}] &= \frac{2^{s_{\rm even \rm}-\chi_{\rm even \rm}}{s_{\rm odd \rm}-\chi_{\rm odd \rm} \choose t-\chi_{\rm odd \rm}(i)}}{{c_i+c_j \choose c_i}}, \label{first3} \\
		\bP[C_{n,k} \in Q, (a_1,a_2)=(1,2) | R_{A,\underline{c}'}] &= \frac{2^{s_{\rm even \rm}-\chi_{\rm even \rm}}{s_{\rm odd \rm}-\chi_{\rm odd \rm} \choose t+1-\chi_{\rm odd \rm}(i)}}{{c_i+c_j \choose c_i+1}}. \label{firstz}
	\end{align}

Let $b = c_j-c_i \ge 2$. Note that the $\chi$ values will be the same when considering both $\underline{c}$ and $\underline{c}'$ and so dividing \eqref{first3} by \eqref{firstz} gives
	\begin{align}
        		\frac{\bP[C_{n,k} \in Q, (a_1,a_2)=(1,2) | R_{A,\underline{c}}]}{\bP[C_{n,k} \in Q, (a_1,a_2)=(1,2) | R_{A,\underline{c}'}]} &= \frac{c_j(t+1-\chi_{\rm odd \rm}(i))}{(c_i+1)(s_{\rm odd \rm}-t-\chi_{\rm odd \rm}(j))} \nonumber \\
		&= \frac{c_j}{c_i+1} \cdot \frac{s_{\rm odd \rm}-b+2-2\chi_{\rm odd \rm}(i)}{s_{\rm odd \rm}+b-2\chi_{\rm odd \rm}(j)} \nonumber \\
		&\le \frac{c_j}{c_i+1}\cdot \frac{s_{\rm odd \rm}-b+2}{s_{\rm odd \rm}+b-2}. \label{first4}
	\end{align}
	
Note that $\frac{s_{\rm odd \rm}-b+2}{s_{\rm odd \rm}+b-2}$ is non decreasing in $s_{\rm odd \rm}$ and $s_{\rm odd \rm} \le 2c_i+b = 2c_j-b$, so we can bound \eqref{first4} by taking $s_{\rm odd \rm} = 2c_i+b = 2c_j-b$ to get:
	\begin{align}
		\frac{\bP[C_{n,k} \in Q, (a_1,a_2)=(1,2) | R_{A,\underline{c}}]}{\bP[C_{n,k} \in Q, (a_1,a_2)=(1,2) | R_{A,\underline{c}'}]} &\le \frac{c_j}{c_i+1} \cdot \frac{2c_i + b - b +2}{2c_i + b - b - 2} \nonumber \\
		&= \frac{c_j(c_i+1)}{(c_i+1)(c_j-1)} \nonumber \\
		&= \frac{c_j}{c_j-1}. \label{first5}
	\end{align}
	
If we apply inequality \eqref{first5} to \eqref{first2}:
	\begin{align}
		h_v(2,K_{\underline{c}}) &\le \frac{c_j}{c_j-1}\sum_A\biggl[\frac{n!}{c_1}\bP[C_{n,k} \in Q, (a_1,a_2) = (1,2) | R_{A,\underline{c}'}]\cdot \bP[R_{A,\underline{c}} | C_{n,k} \in P_{\underline{c}}]\biggr]. \nonumber
	\end{align}
Recall that $\bP[R_{A,\underline{c}} | C_{n,k} \in P_{\underline{c}}] = \bP[R_{A,\underline{c}'} | C_{n,k} \in P_{\underline{c}'}]$, so:
	\begin{align}
		h_v(2,K_{\underline{c}}) &\le \frac{c_j}{c_j-1}\sum_A\biggl[\frac{n!}{c_1}\bP[C_{n,k} \in Q, (a_1,a_2) = (1,2) | R_{A,\underline{c}'}]\cdot \bP[R_{A,\underline{c}} | C_{n,k} \in P_{\underline{c}}]\biggr] \nonumber \\
		&= \frac{c_1'c_j}{c_1(c_j-1)}\sum_A\biggl[\frac{n!}{c_1'}\bP[C_{n,k} \in Q, (a_1,a_2) = (1,2) | R_{A,\underline{c}'}]\cdot \bP[R_{A,\underline{c}'} | C_{n,k} \in P_{\underline{c}'}]\biggr] \nonumber \\
		&= \frac{c_1'c_j}{c_1(c_j-1)}h_{v'}(2,K_{\underline{c}'}). \nonumber
	\end{align}
Noting that $\tfrac{c_{\ell}'}{c_{\ell}}$ is maximised by $\ell = i,$ we get
	\begin{align}
		h_v(2,K_{\underline{c}}) \le \frac{(c_i+1)c_j}{c_i(c_j-1)}h_{v'}(2,K_{\underline{c}'}), \nonumber
	\end{align}
	as required.
\end{proof}

We now apply this result to prove Lemma~\ref{close}.

\begin{proof}[Proof of Lemma \ref{close}]
Let $k \ge 3$ and $\underline{c} = (c_1,\ldots,c_n) \in \bN^k$ and suppose $K_{\underline{c}}$ has vertex classes $V_1,\ldots,V_k$. Further suppose $T_k(n)$ has vertex classes $V_1',\ldots, V_k'$ with $b_i=|V_i'|<|V_j'|=b_j$ only if $c_i \le c_j$ and suppose that $v \in V_1 \cap V_1'$. We will prove by induction on $f(c,b) = \sum_i|c_i-b_i|$ that
	\begin{align}
		h_v(2,K_{\underline{c}}) \le h_v(2,T_k(n))\prod_{i =1}^ke^{\left|\log\left(\frac{b_i}{c_i}\right)\right|}. \nonumber
	\end{align}

The base case of $f(c,b) = 0$ follows since $K_{\underline{c}}$ is $T_k(n)$. Suppose that $f(c,b) \ge 1$ and the result holds for smaller values of $f(c,b)$. Note that if $f(c,b) \neq 0$, then since $\sum_i (c_i-b_i) = 0$, there must be $i,j$ such that $c_i \le b_i -1$ and $c_j \ge b_j+1$. Let $i$ and $j$ be such that $b_i-c_i$ and $c_j-b_j$ are maximised. If $b_i =  b_j+1$, we have a contradiction since then $c_i < c_j$, but $b_i > b_j$. This means that $c_j \ge c_i + 2$ and so if we let $\underline{c}'=(c'_1,\ldots,c'_k)$ be such that $c'_i = c_i+1, c'_j = c_j-1$ and $c'_{\ell}=c_{\ell}$ otherwise, we may apply Lemma \ref{stepcount} to get that
	\begin{align}
		h_v(2,K_{\underline{c}}) &\le \frac{(c_i+1)c_j}{c_i(c_j-1)}h_v(2,K_{\underline{c}'}) \nonumber \\
		&= \exp\left\{\left|\log\left(\frac{c_i'}{c_i}\right)\right| + \left|\log\left(\frac{c_j'}{c_j}\right)\right|\right\}h_v(2,K_{\underline{c}'}). \label{close34}
	\end{align}

To proceed by induction, we first observe that $f(c',b) < f(c,b)$ and secondly we must check that if $b_r < b_s$, then $c_r' \le c_s'$. Note that this still holds for $r=i$ and $s=j$ and will still hold if neither $r=i$ nor $s=j$. If $r=i$ and $b_i < b_s$ but $c_i' > c_s'$, then it must be the case that $b_s - c_s > b_i - c_i$, which contradicts our choice of $i$. Similarly if we have $s=j$, $b_r < b_j$ and $c_r' > c_j$, then we arrive at the similar contradiction that $c_r - b_r > c_j - b_j$. Therefore we may apply the inductive hypothesis to \eqref{close34} to conclude that
	\begin{align}
		h_v(2,K_{\underline{c}}) &\le \exp\left\{\left|\log\left(\frac{c_i'}{c_i}\right)\right| + \left|\log\left(\frac{c_j'}{c_j}\right)\right|\right\}h_v(2,T_k(n))\prod_{l =1}^ke^{\left|\log\left(\frac{b_l}{c_l'}\right)\right|} \nonumber \\
		&= h_v(2,T_k(n))\prod_{l \neq i,j}e^{\left|\log\left(\frac{b_l}{c_l}\right)\right|}\prod_{l=i,j}\exp\left\{\left|\log\left(\frac{b_l}{c_l'}\right)\right|+ \left|\log\left(\frac{c_l'}{c_l}\right)\right|\right\} \nonumber \\
		&=  h_v(2,T_k(n))\prod_{i =1}^ke^{\left|\log\left(\frac{b_i}{c_i}\right)\right|}. \nonumber
	\end{align}
\end{proof}

We use a more complicated probabilistic argument for the proof of Lemma \ref{Turancount}. We consider a different version of the random codes we have previously considered.

\begin{proof}[Proof of Lemma \ref{Turancount}]
Let $K$ be a copy of the Tur\'{a}n graph $T_k(n)$ with vertex classes $V_1,\ldots,V_k,$ and fix $b_i = |V_i|$ for each $i \in [k].$ (Note we do not order the sizes of the vertex classes.) Fix $a_1=1$, then given $a_{i-1}$ for $i \ge 2$, let $a_i$ be uniformly distributed on $[k] \setminus \{a_{i-1}\}$. Define the code $C^2(b_1,k) = a_1\cdots a_m$, where $m = \max\{j : |\{i\le j : a_i = 1\}| = b_1\}$ (in other words, keep track of a random walk on $K_{k}$ and stop just before the $(b_1+1)$-th appearance of $1$).

Conditional on $m=n$, the code $C^2(b_1,k)$ is uniformly distributed on codes $f_1\cdots f_n$ in $Q$ that contain $b_1$ copies of $1$ and satisfy $f_1=1$. This is equal in distribution to $C_{n,k} = d_1\cdots d_n$, where each $d_i$ is independently uniformly distributed on $[k]$, conditional on $C_{n,k}$ being in $Q$, having $b_1$ copies of $1$ and starting with $d_1 = 1$. This conditional equivalence between the two random codes allows us to compute bounds in new ways.

Let $W$ be the number of transitions from 1 to 2 in $C^2(b_1,k)$ -- that is $W = |\{j : (a_j,a_{j+1})=(1,2)\}|$. Note that any shift of a code in $Q \cap P_{\underline{b}}$ ($a_{M+1}\cdots a_n a_1 \cdots a_{M}$ for example) will also be in $Q \cap P_{\underline{b}}$. This means that we can shift the code $C^2(b_1,k)$ to each appearance of $1$ to get another instance of a code $f_1 \cdots f_n$ in $Q$, with $f_1=1$ containing $b_1$ appearances of $1$. Thus by symmetry, given $W$, the probability that $C^2(b_1,k)$ starts with $(a_1,a_2)=(1,2)$ is $\frac{W}{b_1}$. We seek to show that $W$ is at most $\frac{b_1}{2k}$ with probability asymptotically smaller than the probability that $C^2(b_1,k)$ is in $P_{\underline{b}}$. With this we know that, conditional on the event $\{C^2(b_1,k) \in P_{\underline{b}}\},$ with high probability $W\ge \tfrac{b_1}{2k}$ and hence by symmetry
	\begin{align*}
		\bP\left[a_2 = 2 | C^2(b_1,k) \in P_{\underline{b}}\right] = \bE\left[\frac{W}{b_1} \bigg| C^2(b_1,k) \in P_{\underline{b}}\right] \ge \frac{1}{2k}(1-o(1)).
	\end{align*}

Since each letter after a copy of $1$ is independently and uniformly distributed on $\{2,\ldots,k\}$ and there are $b_1$ copies of 1, $W$ is distributed like a Binomial random variable $\mathrm{Bin}(b_1,\frac{1}{k-1})$. Applying a Chernoff bounds gives:
	\begin{align}
		\bP\left[W \le \frac{n}{2k^2}\right] \le e^{-\frac{n}{8k^2}}. \label{cher1}
	\end{align}

Now consider the probability that the code $C^2(b_1,k)$ is of the correct length. Note that the letter directly after a $1$ cannot be a $1$ but (until the next copy of $1$), each subsequent letter is a $1$ with probability $\frac{1}{k-1}$ and so removing the letter after each $1$ and considering an appearance of a $1$ as a \emph{failure}, the variable $m-2b_1$ is distributed like a Negative Binomial random variable, $\mathrm{NB}(b_1,\frac{k-2}{k-1})$.
	\begin{align}
		\bP[m=n] &= \bP\left[\mathrm{NB}\left(b_1,\frac{k-2}{k-1}\right) = n-b_1\right] \nonumber \\
		&= {n-(b_1+1) \choose n-2b_1}\left(\frac{k-2}{k-1}\right)^{n-2b_1}\left(\frac{1}{k-1}\right)^{b_1}. \nonumber
	\end{align}
	
Now an application of de Moivre-Laplace (see \cite[VII.3]{Feller-Probability}) tells us that
	\begin{align}
		\bP[m=n] = \Theta\biggl(n^{-\frac{1}{2}}\exp\biggl\{-\frac{(b_1-\frac{n-b_1}{k-1})^2}{2(n-b_1)\frac{k-2}{(k-1)^2}}\biggr\}\biggr). \label{cherz}
	\end{align}

Note that $|b_1 - \frac{n}{k}| < 1$, as the size of a vertex class of a copy of the Tur\'{a}n graph $T_k(n)$ and so $|b_1-\frac{n-b_1}{k-1}| = |\frac{k}{k-1}(b_1-\frac{n}{k})| < 2$. Putting this into \eqref{cherz}, we see that
	\begin{align}
		\bP[m=n] &= \Theta \biggl(n^{-\frac{1}{2}}\exp \biggl\{-O\bigl(n^{-1}\bigr)\biggr\}\biggr) \nonumber \\
		&= \Theta \bigl(n^{-\frac{1}{2}}\bigr). \label{cher2}
	\end{align}
	
Next, consider $\bP\left[C^2(b_1,k) \in P_{\underline{b}} | m = n\right]$. As mentioned above, conditional on $m=n$, $C^2(b_1,k)$ is distributed like $C_{n,k}$ conditional on being in $Q$, starting with $d_1 = 1$ and having $b_1$ copies of $1$. By Lemma \ref{major}, the events $\{C_{n,k} \in P_{\underline{b}}\}$ and $\{C_{n,k} \in Q\}$ are positively correlated and so
	\begin{align}
		\bP[C^2(b_1,k) \in P_{\underline{b}} | m = n] &= \bP[C_{n,k} \in P_{\underline{b}} | C_{n,k} \in Q, d_1 = 1, \text{ $b_1$ copies of $1$}] \nonumber \\
		&\ge \bP[C_{n,k} \in P_{\underline{b}} | C_{n,k} \in Q] \nonumber \\
		&\ge \bP[C_{n,k} \in P_{\underline{b}}]. \nonumber
	\end{align}
Recalling \eqref{multinomial1} and that $\left|b_i-\tfrac{n}{k}\right| < 1$ for all $i,$ Stirling's approximation gives
	\begin{align}
		\bP[C^2(b_1,k) \in P_{\underline{b}} | m = n] &= \Omega(n^{-\frac{k}{2}}). \label{cher3}
	\end{align}

So combining \eqref{cher2} and \eqref{cher3} we can conclude
	\begin{align}
		\bP\left[C^2(b_1,k) \in Q \cap P_{\underline{b}}\right] &= \bP\left[C^2(b_1,k) \in P_{\underline{b}} | m = n\right]\bP[m=n] \nonumber \\
		&= \Omega\left(n^{-\frac{k+1}{2}}\right). \label{cher4}
	\end{align}

We can now complete our proof. We have
	\begin{align}
		h_v\bigl(2,T_k(n)\bigr) &= k^n(b_1-1)!\biggl(\prod_{l=2}^k(b_l!)\biggr)\bP[C_{n,k} \in Q \cap P_{\underline{b}}, (d_1,d_2) = (1,2)] \nonumber \\
		&= k^n(b_1-1)!\biggl(\prod_{l=2}^k(b_l!)\biggr)\bP[C_{n,k} \in Q, d_1 = 1, |\{j : d_j = 1]| = b_1\} \nonumber \\
		&\cdot \bP[C_{n,k} \in P_{\underline{b}}, d_2 = 2 | C_{n,k} \in Q, d_1 = 1, |\{j : d_j = 1\}| = b_1]. \nonumber
	\end{align}

Recall that $C_{n,k} = d_1 \cdots d_n$ given that $C_{n,k} \in Q$ and $d_1 = 1$ and $|\{j : d_j = 1\}| = b_1$ is equal in distribution to $C^2(b_1,k) = a_1 \cdots a_m$ given $m=n$ and so
	\begin{align}
		h_v\bigl(2,T_k(n)\bigr) &= k^n(b_1-1)!\biggl(\prod_{l=2}^k(b_l!)\biggr)\bP[C_{n,k} \in Q, d_1 = 1, |\{j : d_j = 1\}| = b_1] \nonumber \\
		&\cdot \bP[C^2(b_1,k) \in P_{\underline{b}}, a_2 = 2 | m = n] \nonumber \\
		&= k^n(b_1-1)!\biggl(\prod_{l=2}^k(b_l!)\biggr)\bP[C_{n,k} \in Q, d_1 = 1, |\{j : d_j = 1\}| = b_1] \nonumber \\
		&\cdot \bP[a_2 = 2 | C^2(b_1,k) \in P_{\underline{b}}, m=n] \cdot \bP[C^2(b_1,k) \in P_{\underline{b}} | m=n]. \label{cher6}
	\end{align}
	
We can bound $\bP[d_2 = 2 | C^2(b_1,k) \in P_{\underline{b}}, m = n]$ by conditioning on the value of $W$ as follows:
	\begin{align}
		\bP[a_2 = 2 | C^2(b_1,k) \in P_{\underline{b}}, m = n] &\ge \bP\left[a_2 = 2 \bigg| C^2(b_1,k) \in P_{\underline{b}}, m = n, W > \frac{n}{2k^2}\right] \nonumber \\
		&- \bP\left[W \le \frac{n}{2k^2} \bigg| C^2(b_1,k) \in P_{\underline{b}}, m = n \right] \nonumber \\
		&\ge \frac{n}{2k^2b_1} - \frac{\bP[W \le \frac{n}{2k^2}]}{\bP[C^2(b_1,k) \in P_{\underline{b}}, m = n]}. \nonumber
	\end{align}
By applying \eqref{cher1} and \eqref{cher4} we get
	\begin{align}
		\bP[a_2 = 2 | C^2(b_1,k) \in P_{\underline{b}}, m = n] &\ge \frac{n}{2k^2b_1} - O\biggl(\frac{e^{-\frac{n}{8k^2}}}{n^{-\frac{k+1}{2}}}\biggr) \nonumber \\
		&= \frac{n}{2k^2b_1} - o(1). \nonumber
	\end{align}

This means that for sufficiently large $n$, $\bP[a_2 = 2 | C^2(b_1,k) \in P_{\underline{b}}, m = n] \ge \frac{1}{3k}$. Putting this into \eqref{cher6}, we see
	\begin{align}
		h_v\bigl(2,T_k(n)\bigr) &\ge \frac{k^n(b_1-1)!}{3k}\biggl(\prod_{l=2}^k(b_l!)\biggr)\bP[C_{n,k} \in Q, d_1 = 1, |\{j : d_j = 1\}| = b_1] \nonumber \\
		&\cdot \bP[C^2(b_1,k) \in P_{\underline{b}} | m=n] \nonumber \\
		&= \frac{k^n(b_1-1)!}{3k}\biggl(\prod_{l=2}^k(b_l!)\biggr)\bP[C_{n,k} \in Q, d_1 = 1, |\{j : d_j = 1\}| = b_1] \nonumber \\
		&\cdot \bP[C_{n,k} \in P_{\underline{b}} | C_{n,k} \in Q, d_1 = 1, |\{j : d_j = 1\}| = b_1] \nonumber \\
		&= \frac{k^n(b_1-1)!}{3k}\biggl(\prod_{l=2}^k(b_l!)\biggr)\bP[C_{n,k} \in Q \cap P_{\underline{b}}, d_1 = 1] \nonumber \\
		&= \frac{k^n}{2n}\biggl[\prod_{i=1}^k(b_i!)\biggr]\cdot \bP[C_{n,k} \in Q \cap P_{\underline{b}}] \cdot \frac{2n \cdot\bP[d_1 = 1 | C_{n,k} \in Q \cap P_{\underline{b}}]}{3kb_1} \nonumber \\
		&= h\bigl(T_k(n)\bigr) \cdot \frac{2n \cdot\bP[d_1 = 1 | C_{n,k} \in Q \cap P_{\underline{b}}]}{3kb_1}. \nonumber
	\end{align}

By symmetry, $\bP[d_1 = 1 | C_{n,k} \in Q \cap P_{\underline{b}}] = \frac{b_1}{n}$. This completes the proof of the lemma.
\end{proof}

Now we bound below the number of Hamilton cycles in $T_k(n)$ by the number of Hamilton cycles in $T_k(m)$, where $m < n$.

\begin{proof}[Proof of Lemma \ref{recursion}]
Let $v$ be a  vertex contained in the largest vertex class $V_i$ in $T_k(n)$. Removing $v$ gives $T_k(n-1)$. For each Hamilton cycle $v_1\cdots v_{n-1}$ in $T_k(n-1)$, we can form a Hamilton cycle in $T_k(n)$ by inserting $v$ between two vertices $v_j$ and $v_{j+1}$, both not in $V_i$. For each Hamilton cycle in $T_k(n-1)$, there are at least $(n-1)\frac{k-2}{k}$ spaces where we can insert $v$ and under this construction each Hamilton cycle in $T_k(n)$ will be formed in at most one way. Counting over all Hamilton cycles in $T_k(n-1)$, we get that
	\begin{align}
		h(T_k(n)) \ge (n-1)\frac{k-2}{k}h(T_k(n-1)). \label{second1}
	\end{align}

We can apply equation \eqref{second1} inductively to get that for any $i \in [n]$,
	\begin{align}
		h(T_k(n)) \ge (n-1)_i\left(\frac{k-2}{k}\right)^ih(T_k(n-i)). \nonumber
	\end{align}
\end{proof}

We now bound the number of cycles in $T_k(n)$ in terms of the number of Hamilton cycles.
\begin{proof}[Proof of Lemma \ref{secondcount}]
Let $I$ be a subset of $[n]$ with $|I| = r$. Then by Lemma \ref{Turanbest} and Lemma \ref{recursion}, we have
  \begin{align}
    h(G[I]) &\le h(T_k(r)) \nonumber \\
    &\le \left(\frac{k}{k-2}\right)^{n-r}\frac{h(T_k(n))}{(n-1)_{n-r}} \nonumber \\
    &\le \left(\frac{2k}{k-2}\right)^{n-r}\frac{h(T_k(n))}{(n)_{n-r}}. \nonumber
  \end{align}
Summing over all subsets $I$, we have
  \begin{align}
    c(T_k(n)) &\le \sum_{i=0}^{n-3}{\binom{n}{i}}\left(\frac{2k}{k-2}\right)^i\frac{h(T_k(n))}{(n)_i} \nonumber \\
    &= h(T_k(n))\sum_{i=0}^{n-3}\frac{1}{i!}\left(\frac{2k}{k-2}\right)^i \nonumber \\
    &\le e^{\frac{2k}{k-2}}h(T_k(n)), \nonumber
  \end{align}
  as required.
\end{proof}

Finally, we prove Lemma \ref{second2count}.

\begin{proof}[Proof of Lemma \ref{second2count}]
Let $n \in \bN$ and denote $\left\lfloor \frac{n}{2} \right\rfloor$ by $t$ and $\left\lceil \frac{n}{2} \right\rceil$ by $t'$. For $r \ge 2$, the number of cycles of length $2r$ in $T_2(n)$ is $$\frac{\left(t\right)_r\left(t'\right)_r}{2r}.$$ Summing over $r=2,\ldots,t$ gives
	\begin{align}
		c(T_2(n)) &= \sum_{r=2}^t \frac{\left(t\right)_r\left(t'\right)_r}{2r} \nonumber \\
		&= \frac{t! t'!}{2t} \sum_{r=2}^t \frac{t}{r (t-r)! (t'-r)!} \nonumber \\
		&\le \frac{t! t'!}{2t} \sum_{r'=0}^{t-2} \frac{t}{(t-r') r'! r'!}, \nonumber 
	\end{align}
where we substituted $r' = t-r$ to obtain the second equality. As $c_{2t}(T_2(n)) = \frac{t! t'!}{2t}$ and $\frac{t}{(t-s) s!}$ is easily bounded by $2$, we have
	\begin{align}\label{eq:t2}
		c(T_2(n)) &\le 2c_{2t}(T_2(n)) \sum_{r'=0}^{t-2} \frac{1}{r'!} \nonumber \\
		&\le 2c_{2t}(T_2(n)) \sum_{r'\ge0} \frac{1}{r'!} = 2e \cdot c_{2t}(T_2(n)). 
	\end{align}

Let $s = \left\lfloor \frac{n-1}{2} \right\rfloor$ and $s' = \left\lceil \frac{n}{2} \right\rceil$. Note that $t = s'$ and $t' = s+1$, and so
	\begin{align}\label{eq:t22}
	 \frac{n-2}{2}\frac{s'! s!}{2s} \le \frac{s}{t} \cdot\frac{s'! s!t'}{2s}  = \frac{t!t'!}{2t}. 
	\end{align}
Using \eqref{eq:t22} gives
	\begin{align*}
	c_{2\left\lfloor \frac{n-1}{2} \right\rfloor}(T_2(n-1)) =  \frac{s'! s!}{2s} \le  \frac{2}{n-2} \cdot \frac{t!t'!}{2t} = \frac{2}{n-2} c_{2\left\lfloor \frac{n}{2} \right\rfloor}(T_2(n)) . 
	\end{align*}
As $i=o(n)$, repeatedly applying this bound along with \eqref{eq:t2} gives

	\begin{align*}
		c(T_2(n-i)) &\le 2e \cdot c_{2\left\lfloor \frac{n-i}{2} \right\rfloor}(T_2(n-i))\\
		&\le 2e \left(\prod_{j=1}^i \frac{2}{n-j-1}\right) c_{2\left\lfloor \frac{n}{2} \right\rfloor}(T_2(n))\\
		&\le 2e \left(\frac{4}{n}\right)^i c_{2\left\lfloor \frac{n}{2} \right\rfloor}(T_2(n)),
	\end{align*}
	as required.
\end{proof}

\section{Conclusion and Open Questions}\label{sec5}
In this paper we resolve Conjecture~\ref{gundconj} for sufficiently large $n$ (we do not optimise the value of $n$ given by our approach, as it would still be very large). For triangle-free graphs, Arman, Gunderson and Tsaturian~\cite{Gund1} (see also \cite{Gund2}) show that the Tur\'{a}n graph $T_2(n)$ uniquely maximises the number of cycles when $n\ge 141$, but it seems likely that this should hold for all values of $n$.

Theorem~\ref{main} only deals with $H$ such that $\chi(H) \ge 3$ and  $H$ contains a critical edge. When $H$ does not satisfy these properties, our approach is not feasible as the extremal $H$-free graph is no longer $T_k(n)$. It is interesting to consider what could be true for such $H$. For example, it is natural to ask whether it is possible to maximize the number of edges and the number of cycles simulateously (as in Theorem \ref{main}).

\begin{qu}
Let $H$ be a fixed graph. Does $\Ex(n;H)$ contain a graph with $m(n;H)$ cycles for sufficiently large $n$?
\end{qu}

As $T_2(n)$ does not contain any odd cycle, Theorem~\ref{main} implies that for any odd $k$, $T_2(n)$ is the $n$-vertex graph with odd girth at least $k$ containing the most cycles. Arman, Gunderson and Tsaturian~\cite{Gund1} ask a more general question.

\begin{qu}[Arman, Gunderson, Tsaturian~\cite{Gund1}]
What is the maximum number of cycles in an $n$-vertex graph, with girth at least $g$?
\end{qu}

This question seems difficult since comparatively little is known about the maximum number of edges in an graph with girth at least $g \ge 4$.

Another interesting problem was raised by Kir\'{a}ly \cite{deathnote} who asked for the maximum number of cycles in a graph with $m$ edges can contain (without constraining the number of vertices); he conjectured an upper bound of $1.4^m$ cycles. In a recent paper Arman and Tsaturian~\cite{armtsat} give an upper bound of $8.25\times 3^{m/3}$ and a lower bound of $1.37^m$, and conjecture that their upper bound is correct to within a $\left(1+o(1)\right)^m$ factor. It would be interesting to consider the effect of adding the additional constraint of forbidding a subgraph. In particular what is the maximum number of cycles that a triangle-free graph with $m$ edges can contain?

A similar problem to that of Kir\'{a}ly is to maximise the number of cycles in a graph with $n$ vertices and $m$ edges. For $m=\Omega(n^2)$ and $n$ sufficiently large, Arman and Tsaturian \cite[Conjecture 6.1]{armtsat} conjecture a maximum of $\left(1+o(1)\right)^n\left(\tfrac{2m}{en}\right)^n$ cycles. The current best upper bound is $\left(1+o(1)\right)^n\left(\tfrac{2m}{2n}\right)^n$ given in the same paper. We believe that the method used to prove Lemma \ref{easycor} improves this upper bound but does not prove the conjecture. 


Another direction of research is to maximise the number of \emph{induced} cycles. Given a graph $G$, let $m_I(G)$ denote the number of induced cycles in $G$ and let $m_I(n) := \max\{m_I(G): |V(G)| = n\}$. Morrison and Scott~\cite{morsco} recently determined $m_I(n)$ for $n$ sufficiently large and proved that the extremal graphs are unique. The extremal graphs in question are essentially blow-ups of $C_{n/3}$ and contain many copies of $C_4$.

It would be interesting to consider what happens to the extremal graphs when we forbid $C_4$.

\begin{qu}
What is $m_I(n;C_4):= \max\{m_I(G): |V(G)| = n, G \text{ is } C_4\text{-free}\}$?
\end{qu}

\begin{ack}
We would like to thank the referees for their careful reading of the manuscript and detailed comments.
\end{ack}

\bibliography{cycles}
	\bibliographystyle{amsplain}

\end{document}